\theoremstyle{plain}
\newtheorem{theorem}{Theorem}[section]
\newtheorem{corollary}[theorem]{Corollary}
\newtheorem{proposition}[theorem]{Proposition}
\newtheorem{lemma}[theorem]{Lemma}
\theoremstyle{definition}
\newtheorem{example}[theorem]{Example}
\theoremstyle{remark}
\newtheorem{remark}[theorem]{Remark}
\numberwithin{equation}{section}\theoremstyle{plain}
\newcommand{\dar}[2]{\ar@<2pt>[r]^-{#1}\ar@<-2pt>[r]_-{#2}}
\newcommand{\prettydef}[1]{\left\{\begin{array}{ccl}#1\end{array}\right.}
\newcommand{\ti}{\mbox{-}}
\newcommand{\lcoev}{\mathrm{coev}}
\newcommand{\Ob}{\mathrm{Ob}}
\newcommand{\bra}[1]{\langle #1 \rangle}
\newcommand{\iso}{\stackrel{\sim}{\longrightarrow}}
\newcommand{\eps}{\varepsilon}
\newcommand{\ldual}[1]{\leftidx{^\vee}{\!#1}{}}
\newcommand{\rdual}[1]{{#1}^\vee}
\newcommand{\kk}{\Bbbk}
\newcommand{\kt}{$\Bbbk$\nobreakdash-\hspace{0pt}}
\newcommand{\OO}{\mathcal{O}}
\newcommand{\un}{\mathbb{1}}
\newcommand{\car}{\mathrm{char}}
\newcommand{\Aa}{{\mathcal A}}
\newcommand{\B}{{\mathcal B}}
\newcommand{\C}{{\mathcal C}}
\newcommand{\D}{{\mathcal D}}
\newcommand{\LL}{{\mathcal L}}
\newcommand{\T}{{\mathcal T}}
\newcommand{\Z}{{\mathcal Z}}
\newcommand{\Kh}{{K}}
\newcommand{\M}{\mathcal{M}}
\newcommand{\N}{\mathcal{N}}
\newcommand{\TY}{\mathcal{TY}}
\newcommand{\Zed}{{\mathbb Z}}
\newcommand{\E}{{\mathcal E}}
\newcommand{\U}{{\mathcal U}}
\newcommand{\A}{\mathbb A}
\newcommand{\Spec}{\operatorname{Spec}}
\newcommand{\Pic}{\operatorname{Pic}}
\newcommand{\pt}{\mathrm{pt}}
\newcommand{\ab}{\mathfrak{ab}}
\newcommand{\KER}{\mathfrak{Ker}}
\newcommand\CoRep[1]{\operatorname{comod}\!\mbox{-} #1}
\newcommand\Rep[1]{\operatorname{mod}\!\mbox{-} #1}
\newcommand\rep{\operatorname{rep}}
\newcommand\coend{\operatorname{coend}}
\newcommand\Irr{\operatorname{Irr}}
\newcommand\FPdim{\operatorname{FPdim}}
\newcommand\FPind{\operatorname{FPind}}
\newcommand\vect{\operatorname{vect}}
\newcommand\id{\operatorname{id}}
\newcommand\End{\operatorname{End}}
\newcommand\Hom{\operatorname{Hom}}
\newcommand\dys{\operatorname{dys}}
\newcommand{\EndFun}{\underline{\operatorname{End}}}
\newcommand{\toto}{\longrightarrow}
\newcommand{\eab}{\normalcolor{}}
\begin{document}
\title[ Central exact sequences of tensor categories]{ Central exact sequences
of tensor categories, equivariantization and applications}
\author{Alain Brugui\`{e}res}
\author{Sonia Natale}
\address{Alain Brugui\`{e}res: D\' epartement de Math\' ematiques.
\; Universit\' e Montpellier II. Place Eug\` ene Bataillon. 34 095
Montpellier, France} \email{bruguier@math.univ-montp2.fr
\newline \indent \emph{URL:}\/ http://www.math.univ-montp2.fr/~bruguieres/}
\address{Sonia Natale: Facultad de Matem\'atica, Astronom\'\i a y F\'\i sica.
Universidad Nacional de C\'ordoba. CIEM -- CONICET. Ciudad
Universitaria. (5000) C\'ordoba, Argentina}
\email{natale@famaf.unc.edu.ar
\newline \indent \emph{URL:}\/ http://www.famaf.unc.edu.ar/$\sim$natale}

\thanks{The work of the second author was partially supported by  CONICET, SeCYT--UNC and
Mathamsud project NOCOMALRET}

\subjclass{18D10; 16T05}

\date{\today.}

\begin{abstract} We define equivariantization of tensor categories under tensor group scheme actions and give  necessary and sufficient conditions for an exact sequence of tensor categories to be
an equivariantization under a finite group or finite group scheme
action. We introduce the notion of central exact sequence of
tensor categories and use it in order to present an alternative
formulation of some known characterizations of equivariantizations
for fusion categories, and to extend these characterizations to
equivariantizations of finite tensor categories under  finite
group scheme actions. In particular, we obtain a simple
characterization of equivariantizations under actions of finite
abelian groups. As an application, we show that if $\C$ is a
fusion category and $F: \C \to \D$ is a dominant tensor functor of
Frobenius-Perron index $p$, then $F$ is an equivariantization if
$p=2$, or if $\C$ is weakly integral and $p$ is the smallest prime
factor of $\FPdim \C$.
%
\end{abstract}

\maketitle

\section{Introduction}

In this paper we pursue the study of
exact sequences of tensor categories initiated
in \cite{tensor-exact}.
Exact sequences of tensor categories generalize (strict) exact sequences of Hopf algebras, due do Schneider, and in particular, exact sequences of groups.


By a tensor category over a field $\kk$, we mean a monoidal rigid category $(\C,\otimes,\un)$ endowed with a \kt linear abelian
structure such that
\begin{itemize}
\item $\Hom$ spaces are finite dimensional and all objects have finite length,
\item the tensor product $\otimes$ is \kt linear in each variable and the unit object $\un$ is scalar, that is,  $\End(\un) = \kk$.
\end{itemize}
A tensor category is \emph{finite} if it is \kt linearly equivalent to the category of finite dimensional right modules over a finite dimensional \kt algebra.

We will mostly work with finite tensor categories, with special attention to \emph{fusion categories}. A fusion category is a split semisimple  finite tensor category (\emph{split} semisimple means semisimple with scalar simple objects).

A \emph{tensor functor} is a strong monoidal, \kt linear exact
functor between tensor categories over $\kk$; it is  faithful. A
tensor functor $F:\C \to \D$ is \emph{dominant}\footnote{Dominant
functors  between finite tensor categories  are called
\emph{surjective} in \cite{ENO}.} if any object $Y$ of $\D$ is a
subobject of $F(X)$ for some object $X$ of $\C$.

A tensor functor $F: \C \to D$ is \emph{normal} if any object $X$ of $\C$ admits a subobject $X' \subset X$ such that $F(X')$ is the largest
trivial subobject of $F(X)$. An object is \emph{trivial} if it is isomorphic to $\un^n$ for some natural integer $n$.

We denote by $\KER_F \subset \C$ the full tensor subcategory of objects $X$ of $\C$ such that $F(X)$ is trivial.

%

%

Let $\C'$, $\C$, $\C''$ be tensor categories over $\kk$. A
sequence of tensor functors

\begin{equation}\label{suite}\C' \overset{i}\toto \C \overset{F}\toto \C''
\end{equation}
is called an \emph{exact sequence of tensor categories} if
\begin{itemize}
\item $F$
is dominant and normal,
\item $i$ is a full embedding whose essential image is  $\KER_F$.
\end{itemize}

An exact sequence of finite tensor categories $\C' \toto \C
\overset{F}\toto \C''$ is \emph{perfect} if the left (or
equivalently, the right) adjoint of $F$ is exact. Such is
always the case if $\C''$ is a fusion category.

\subsection*{The monadic approach} Let $F: \C \to \C''$ be a tensor functor between finite tensor categories. Then $F$ is monadic, that is, it admits a left adjoint $L$. The endofunctor $T = FL$ of $\C''$ is a \kt linear Hopf monad on $\C''$, and $\C$ is tensor equivalent to the category ${\C''}^T$ of $T$-modules in $\C''$. Moreover, $F$ is dominant if and only if $T$ is faithful, and $F$ is normal if and only if $T(\un)$ is trivial, in which case $T$ is said to be normal.

Via this construction, exact sequences of finite tensor categories
$$\C' \overset{i}\toto \C \overset{F}\toto \C''$$ are classified by \kt linear right exact faithful normal Hopf monads on $\C''$ \cite[Theorem 5.8]{tensor-exact}.

\subsection*{Examples}
Any (strictly) exact sequence of Hopf algebras  $H' \to H \to H''$
over a field $\kk$ in the sense of Schneider~\cite{schneider}
gives rise to an exact sequence of tensor categories of finite
dimensional comodules:
$$(\E_S)\qquad\CoRep{H'} \to \CoRep{H} \to \CoRep{H''},$$
and if $H$ is finite-dimensional we also have an exact sequence of tensor categories of finite dimensional modules:
$$\Rep{H''} \to \Rep{H} \to \Rep{H'}.$$

Equivariantization is another source of examples. Let $G$ be a
finite group acting on a tensor category $\D$ by tensor
autoequivalences. Then the \emph{equivariantization} $\D^G$ is a
tensor  category and the forgetful functor $\D^G \to \D$ gives
rise to a  (perfect) exact sequence of tensor categories
\begin{equation}\label{suite-equiv}\rep G \to \D^G \to
\D,\end{equation} see \cite[Section 5.3]{tensor-exact}. This is
extended in Section~\ref{sect-group-scheme} to the case where $G$
is a finite group  scheme. If $\D$ is a fusion category,
$\kk$ is algebraically closed and the order of $G$ is not a
multiple of $\car(\kk)$, then \eqref{suite-equiv} is an exact
sequence of fusion categories.

A tensor functor $F: \C \to \D$ is an \emph{equivariantization} if there is an action of a finite group scheme $G$ on $\D$ and a tensor equivalence $\C \simeq \D^G$ such that the triangle of tensor functors $$\xymatrix@!0 @R=6mm @C=10mm{\C \ar[rr]^{\simeq} \ar[rd]& & \D^G\ar[ld]\\ &\D& }$$
commutes up to a \kt linear monoidal isomorphism.

An exact sequence of tensor categories $\C' \overset{i}\toto \C
\overset{F}\toto \C''$ is called an \emph{equivariantization exact
sequence}  if  it is equivalent to an exact sequence
defined by an equivariantization, or equivalently, if $F$ is an
equivariantization.


A \emph{braided exact sequence} is an exact sequence of tensor
categories where all categories and functors are braided. If $\C'
\to \C \to \C''$ is a braided exact sequence, then $\C'$ is a
subcategory of the category $\T \subset \C$ of transparent objects
of $\C$ (see \cite{bruguieres}). We say that $\C' \to \C \to \C''$
is a \emph{modularization exact sequence} if $\C''$ is modular,
that is, if all transparent objects of $\C''$ are trivial. In that
case $\C' = \T$. Examples of  modularization  exact
sequences of fusion categories arise through the modularization
procedures introduced in \cite{bruguieres, mueger-mod}.

\subsection*{Equivariantization criteria}
Generalizing a result of \cite{tensor-exact}, we show  that an
exact sequence of finite tensor categories $\C' \to \C \to \C''$
is an equivariantization exact sequence if and only if the
associated normal Hopf monad is exact and cocommutative in the
sense of \cite{tensor-exact}. If  $\C'$ is finite and $\kk$ is an
algebraically closed field such that $\car(\kk)$ does not divide
$\dim\C'$, then the corresponding group scheme is discrete so that
we have an equivariantization in the usual sense.

In particular, any  perfect braided  exact sequence  of finite
tensor categories is an equivariantization exact sequence.

On the other hand, \cite[Proposition 2.10]{ENO2}(i) affirms that a fusion category $\C$ is an equivariantization under the action of a finite group $G$ if there is a full braided embedding $j$ of the category $\rep G$ into  into the Drinfeld center $\Z(\C)$ of $\C$, such that
$Uj: \rep G  \to \C$ is full, where $U$ denotes the forgetful functor $\Z(\C) \to \C$.
See also \cite[Theorem 4.18]{DGNOI}.

In order to unify those two result, we introduce the notion of central exact sequence of tensor categories.
An exact sequence of finite tensor categories
$$\C' \overset{i}\toto \C \overset{F}\toto \C''$$
is \emph{central} if, denoting by $(A,\sigma)$ its central
commutative algebra, the tensor functor $i: \C' \to \C$  lifts to
a tensor functor $\tilde{i}: \C' \to \Z(\C)$ such that
$\tilde{i} (A)=(A,\sigma)$. Such a lift, if it exists, is
essentially unique.

We show that an exact sequence of finite tensor categories  is
 central if and only if its normal Hopf monad is
cocommutative.
%
We give two proofs of this result.

The first one works in the fusion case. In that situation our
characterization is a reformulation in terms of exact sequences of
the characterization of equivariantizations given in
\cite[Proposition 2.10]{ENO2}, \cite[Theorem 4.18]{DGNOI}. However
our proof is organized differently, and boils down to showing that
a central exact sequence of fusion categories is `dominated' in a
canonical way by a modularization exact sequence, thus:
$$
\xymatrix {
\C' \ar[r] \ar[d]_{=} & C_{\Z(\C)}(\C') \ar[r]\ar[d] &  \Z(\C'') \ar[d]\\
\C' \ar[r] & \C \ar[r] & \C'',
}
$$
where $C_{\Z(\C)}(\C')$ denotes the centralizer of $\C'$ viewed as a fusion subcategory of $\Z(\C)$,
and that an exact sequence dominated by an equivariantization exact sequence is itself an equivariantization exact sequence.

The second one works for finite tensor (not necessarily
semisimple) categories and actions of finite group schemes, and
relies on the construction of the double of a Hopf monad in
\cite{BV}. It is based on the existence of a commutative diagram
of tensor categories
$$
\xymatrix {
\C' \ar[r] \ar[d]_{=} & \Z(\C) \ar[r]\ar[d] &  \Z_F(\C'') \ar[d]\\
\C' \ar[r] & \C \ar[r]^F & \C'',
}
$$
whose first line is an exact sequence of tensor categories, $\Z_F(\C'')$ denoting the center of $\C''$ relative to the functor $F$.

\subsection*{Application}

The \emph{Frobenius-Perron index} of
a dominant tensor functor $F: \C \to \D$ between fusion categories is defined in \cite{tensor-exact} to be
the ratio
$$\FPind(F) = \FPind(\C: \D) = \frac{\FPdim \C}{\FPdim \D}.$$
It is an algebraic integer by \cite[Corollary 8.11]{ENO}.
According to \cite[Proposition 4.13]{tensor-exact}, a dominant tensor functor $F$ of Frobenius-Perron index $2$ is normal.
In this paper we prove the following refinement of this result:

\newtheorem*{theo1}{Theorem 6.1}
\newtheorem*{theo2}{Theorem 6.2}

\begin{theo1}\label{index-2} Let $F: \C \to \D$ be a dominant tensor functor between fusion categories over a field
of characteristic $0$. If $\FPind(\C: \D) = 2$, then $F$ is an equivariantization.
\end{theo1}

This  generalizes the fact that that subgroups of index $2$ are
normal. An analogue in the context of finite dimensional
semisimple Hopf algebras was proved in \cite[Proposition
2]{kob-mas}, \cite[Corollary 1.4.3]{ssld}. We also generalize the
fact that subgroups of a finite group whose index is the smallest
prime factor of the order of the larger group are normal. Recall
that a fusion category is \emph{weakly integral} if its
Frobenius-Perron dimension is a natural integer.

\begin{theo2}\label{index-p} Let $F: \C \to \D$ be a dominant tensor functor between fusion categories over a field of characteristic $0$. Assume that $\FPdim\, \C$ is a natural integer, and that $\FPind(\C: \D)$ is the smallest prime number dividing $\FPdim\,\C$. Then $F$ is an equivariantization.
\end{theo2}

In particular under the hypotheses of Theorem \ref{index-p} the functor
$F$ is normal.
An analogue in the context of
semisimple Hopf algebras was proved in \cite[Proposition
2]{kob-mas}, \cite[Corollary 1.4.3]{ssld}.
Observe
that Theorem \ref{index-p} gives some positive evidence in favor of the
conjecture that every weakly integral fusion category is weakly
group-theoretical \cite{ENO2}.

Regarding the `dual' situation, namely, when $\C$ is a weakly
integral fusion category and $\D \subseteq \C$ is a full fusion
subcategory such that the quotient $\frac{\FPdim \C}{\FPdim \D}$
is the smallest prime factor of $\FPdim \C$, it may be the case
that $\D$ is \emph{not} normal in $\C$; we give an example of this
where $p=2$ and $\C$ is a Tambara-Yamagami category (see
Proposition \ref{simple-ty}). This actually provides examples of
simple fusion categories of Frobenius-Perron dimension $2q$, where
$q$ is an odd prime number.

\subsection*{Organization of the text}
In Section~\ref{sect-exact} we introduce central exact sequences
of tensor categories. We also discuss dominant tensor functors on
weakly integral fusion categories in terms of induced central
algebras and show that this class of fusion categories is closed
under extensions; see Corollary \ref{wi-closed}. We give a general
criterion for an exact sequence to be an equivariantization exact
sequence in Section~\ref{sect-equi-general}. In order to do so, we
generalize the notion of equivariantization to tensor actions of
group schemes on tensor categories in 
Section~\ref{sect-group-scheme}.  The main results,
Theorems~\ref{equiv-monad-general} and \ref{cocom-central}, assert
that equivariantization exact sequences coincide with central
exact sequences, and also with exact sequences whose Hopf monad is
exact  cocommutative.  It is proved in Section~\ref{proof-main}
using the notion of double of a Hopf monad. We apply this
characterization in Section~\ref{sect-special} to several special
cases: braided exact sequences of tensor categories, exact
sequences of fusion categories, equivariantizations under the
action of abelian groups. Lastly we prove Theorems~\ref{index-2}
and~\ref{index-p} in Section \ref{demo}.


\subsection*{Conventions and notation}
We retain the conventions and notation of \cite{tensor-exact}.

If $\C$ is a monoidal category and $A$ is an algebra in $\C$, we denote by $\C_A$ the category of right $A$\ti modules
in $\C$, and by $F_A: \C \mapsto \C_A$ the free $A$\ti module functor, defined by $X \mapsto X \otimes A$.
If $\C$ is additive, so is $\C_A$. In that case, we say that $A$ is \emph{self-trivializing} if $F_A(A) \simeq F_A(\un)^n$ for some natural integer $n$.

Let  $\C$ be a tensor category over a field $\kk$, and let
${\mathcal X}$ be a object,  or a set of objects of $\C$. We
denote by $\bra{\mathcal X}$ the smallest full replete tensor
subcategory of $\C$ containing ${\mathcal X}$. Its objects are the
subquotients of finite direct sums of tensor products of elements
of ${\mathcal X}$ and their duals. We denote by $\un$ the unit
object of $\C$. The tensor subcategory $\bra{\un}$ is the category
of trivial objects of $\C$ and it is tensor equivalent to
$\vect_\kk$.

Given an object $X$ of $\C$, we denote by $\ldual{X}$ and $\rdual{X}$ the left dual and the right dual of $X$ respectively.
An object $X$ of $\C$ is called \emph{invertible} if there exists an object $Y$ of $\C$ such that $X \otimes Y \simeq \un \simeq Y \otimes X$. In that case $Y\simeq \ldual{X} \simeq \rdual{X}$. Invertible objects of $\C$ are both simple and scalar. We denote by $\Pic(\C)$ the set of isomorphism classes of invertible objects of $\C$; it is a group for the tensor product, called the \emph{Picard group of $\C$}.  We set $\C_{\pt} = \bra{\Pic(\C)} \subset \C$.

Now assume $\C$ is a fusion category. The multiplicity of a simple object $X$ in an object $Y$ of $\C$ is
defined as $m_X(Y) = \dim \Hom_\C(X, Y)$. We have $$Y \simeq \oplus_{X \in
\Irr(\C)}X^{m_X(Y)},$$ where $\Irr(\C)$ denotes the set of simple objects of $\C$ up to isomorphism.
An object $X$ of $\C$ is invertible if and only if its Frobenius-Perron dimension is $1$.

\section{Central exact sequences of tensor categories}\label{sect-exact}


\subsection{Tensor functors}
 Let  $F: \C \to \D$ be a tensor functor between tensor categories over $\kk$.  We  denote by $\KER_F \subset \C$ the full subcategory of $\C$ of objects $c$ such that $F(c)$ is trivial in $\D$. 
The category
$\KER_F$ is a tensor category over $\kk$, and it is endowed with a fibre functor
$$\omega_F \prettydef{\KER_F & \to & \vect_\kk\\ x & \mapsto & \Hom(\un,F(x))}$$
By Tannaka reconstruction, this defines a Hopf algebra
$$ H = \coend (\omega_F)  = \int^{x \in \KER_F}  \rdual{\omega(x)}  \otimes
\omega(x)$$ such that $\KER_F \simeq \CoRep  H$.

The tensor functor $F$ admits a left adjoint $L$ if and only if it admits a right adjoint $R$; if they exist, the  adjoints of $F$ are related by $R(X) = \rdual{L(\ldual{X})}$. If $\C$ is finite, then $F$ admits adjoints.

If the tensor functor $F$ admits adjoints, we say that $F$ is
\emph{perfect} if its left, or equivalently, its right adjoint is
exact. Such is always the case if $\D$ is a fusion category.

The tensor functor $F$ is \emph{dominant} if for any object $d$ of
$\D$, there exists an object $c$ of $\C$ such that $d$ is a
subobject of $F(c)$. It is \emph{normal}  if  for any
object $c$ of $\C$, there exists a subobject $c_0 \subset c$ such
that $F(c_0)$ is the largest trivial subobject of $F(c)$.

If $F$ admits adjoints $L$ and $R$, then $F$ is dominant if and only if $L$, or equivalently $R$, is faithful, and $F$ is normal if and only if $L(\un)$, or equivalently $R(\un)$, is a trivial object of $\C$.

If $F$ is a normal tensor functor, the Hopf algebra $H =
\coend(\omega_F)$ is called the \emph{induced Hopf algebra of
$F$}.

\subsection{Central induced
(co)algebras}\label{central-alg}

Let $\C$ and $\D$ be finite tensor categories over $\kk$ and let
$F: \C \to \D$ be a  dominant  tensor functor. Then $F$
admits a left adjoint $L: \D \to \C$ which is faithful and
comonoidal. Consequently $\hat{C}= L(\un)$ is  a coalgebra
 in $\C$, called \emph{the induced coalgebra of $F$,} with
coproduct
$L_2(\un, \un)$ and unit $L_0$, where $(L_2, L_0)$ denotes the
comonoidal structure of $L$. We have
 $\Hom_{\C}(\hat{C}, \un) \simeq \kk$.
 In addition, $\hat{C}$ is endowed with a canonical half-braiding
 $\hat{\sigma}: \hat{C} \otimes \id_\C \to \id_\C \otimes \hat{C}$, which makes it a cocommutative coalgebra in
 the center $\Z(\C)$ of $\C$;  see \cite{BLV} for details of this construction.
 The cocommutative coalgebra $(\hat{C},\hat{\sigma})$ is called \emph{the induced central coalgebra of $F$.}

 Dually, under the same hypotheses the functor $F$ also admits a right adjoint $R$, related to $L$ by $R(X) = \rdual{L(\ldual{X})}$. The functor $R$
 is faithful and monoidal. As a result, $A= R(\un) = \rdual{\hat{C}}$ is an algebra in $\C$, called \emph{the induced algebra of $F$,} and it is endowed a canonical half-braiding
 $\sigma: A \otimes \id_\C \to \id_\C \otimes A$, making it a commutative algebra in $\Z(\C)$.
 The commutative algebra $(A,\sigma)$, which is the right dual of $(\hat{C},\hat{\sigma})$, is called \emph{the induced central algebra of $F$.}
See \cite[Section 6]{tensor-exact}.

\medbreak The category $\C_A = \C_{(A, \sigma)}$ of right
$A$-modules in $\C$ is an abelian \kt linear  monoidal  category over $\kk$ with tensor
product induced by $\otimes_A$ and the half-braiding $\sigma$, and
the functor $F_{(A, \sigma)}: \C \to \C_A$, $F_{(A, \sigma)} (X) =
X \otimes A$, is strong monoidal and \kt linear.

If $F$ is dominant and perfect (that is, $R$ is faithful exact),
 then by \cite[Proposition  6.1]{tensor-exact} $\C_A$ is a
tensor category, and there is a tensor equivalence $\kappa : \D
\to \C_A$ such that the following diagram of tensor functors
commutes up to tensor isomorphims:
$$\xymatrix@!0 @R=1.0cm @C=1.7cm{\C \ar[r]^F \ar[rd]_{F_A}& \D\ar[d]^\kappa\\
& \C_A
}$$
Note that if $F$ is dominant and $\D$ is a fusion category, then $R$ is exact, so  $\C_A \simeq \D$ is a fusion category and in that case, $A$ is semisimple.

\begin{lemma}\label{normal-algebra} Let $F: \C \to \D$ be a tensor functor between finite tensor categories, with induced algebra $A$. The following assertions are equivalent:
\begin{enumerate}[(i)]
\item $F$ is normal;
\item $A$ belongs to $\KER_F$;
\item $A$ is a self-trivializing algebra, that is, $A\otimes A \simeq A^n$ in $\C_A$  for some integer $n$.
\end{enumerate}
Moreover if these hold, the integer $n$ of assertion (iii) is the dimension of the induced Hopf algebra of $F$.
\end{lemma}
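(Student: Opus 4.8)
The plan is to prove the chain (i) $\Leftrightarrow$ (ii) $\Leftrightarrow$ (iii) with the induced (co)algebras $\hat{C} = L(\un)$ and $A = R(\un) = \rdual{\hat{C}}$ as the main players, and then to read off $n$ from the Tannakian description $\KER_F \simeq \CoRep H$. For (i) $\Leftrightarrow$ (ii) I would use the monadic normality criterion recalled in the introduction (from \cite{tensor-exact}): $F$ is normal if and only if $T(\un) = FL(\un)$ is a trivial object of $\D$. Since the tensor functor $F$ preserves right duals, $A = \rdual{\hat{C}}$ gives $F(A) \cong \rdual{(FL(\un))} = \rdual{T(\un)}$; and because the class of trivial objects is stable under duality ($\rdual{(\un^n)} \cong \un^n$), $F(A)$ is trivial precisely when $T(\un)$ is. Hence $A \in \KER_F$ if and only if $F$ is normal.

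For (ii) $\Leftrightarrow$ (iii) the tool is the projection formula in the rigid setting, which furnishes a natural isomorphism $RF(X) \cong X \otimes A$ compatible with the $A$-module structures, so that $\tilde{R} F \cong F_A$ as functors $\C \to \C_A$, where $\tilde{R} \colon \D \to \C_A$ sends $d$ to $R(d)$ equipped with its canonical $A$-action. Here $F_A(\un) = A$ is the unit object of $\C_A$ and $\tilde{R}(\un) = R(\un) = A$. Applying $\tilde{R}$ to an isomorphism $F(A) \cong \un^{\,n}$ then yields $A \otimes A = F_A(A) \cong \tilde{R}(\un)^n = A^n$ in $\C_A$, which is (iii). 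For the converse I would invoke that, when $F$ is perfect — automatically so if $\D$ is fusion, the case of interest — the functor $\tilde{R}$ coincides with the tensor equivalence $\kappa \colon \D \iso \C_A$ of \cite[Proposition 6.1]{tensor-exact}, hence reflects isomorphisms, so $A \otimes A \cong A^n$ descends to $F(A) \cong \un^{\,n}$. This backward implication (iii) $\Rightarrow$ (ii) is the step I expect to be the main obstacle: it amounts to transporting the triviality of $A \otimes A$ from the module category $\C_A$ back to $\D$, transparent when $\tilde{R}$ is an equivalence but requiring a separate conservativity argument for $\tilde{R}$ in the general finite tensor (non-perfect) situation; note that the naive ``self-trivializing implies trivial'' statement fails for arbitrary algebras, so it is essential that $F_A(\un)$ is the \emph{unit} of $\C_A$.

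Finally, to compute $n$, observe that once (i)--(iii) hold, $\hat{C} = L(\un)$ lies in $\KER_F$ and corepresents the fibre functor there: for $X \in \KER_F$ one has $\Hom_{\KER_F}(\hat{C}, X) = \Hom_\C(L(\un), X) \cong \Hom_\D(\un, F(X)) = \omega_F(X)$. Under $\KER_F \simeq \CoRep H$ with $\omega_F$ the forgetful functor, the corepresenting object is the regular $H$-comodule, whose underlying space has dimension $\dim H$; thus $T(\un) = F(\hat{C}) \cong \un^{\dim H}$ and $F(A) \cong \rdual{T(\un)} \cong \un^{\dim H}$. Comparing with $F(A) \cong \un^{\,n}$ gives $n = \dim H$, the dimension of the induced Hopf algebra, as asserted.
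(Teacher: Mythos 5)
Your handling of (i) $\iff$ (ii), of (ii) $\implies$ (iii), and of the identification $n=\dim H$ is correct and essentially parallel to the paper: the paper also gets (i) $\iff$ (ii) directly from the characterization of normality in terms of $A=R(\un)$, and its proof of (ii) $\implies$ (iii), written as $F_A(A)\simeq \kappa F(A) \simeq \kappa F(\un)^n \simeq F_A(\un)^n$, is exactly your projection-formula isomorphism $\tilde{R}F\simeq F_A$. For the value of $n$, the paper quotes $T(\un)\simeq H^*\otimes \un$ from \cite{tensor-exact} and dualizes, while you re-derive this via corepresentability of $\omega_F$ by $\hat{C}=L(\un)$; both are fine (your ``regular $H$-comodule'' should really be the coregular comodule $H^*$, but the dimension count is unaffected).

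The genuine gap is exactly where you flag it: (iii) $\implies$ (ii) is proved in your proposal only when $F$ is dominant and perfect, so that $\tilde{R}=\kappa$ is an equivalence. The lemma, however, is stated and used without perfectness: in the proof of Theorem~\ref{cocom-central} it is applied, precisely in the direction ``self-trivializing $\implies$ normal,'' to the functor $\tilde{U}_T:\Z(\C^T)\to \Z_T(\C)$ (via Lemma~\ref{central-induced}) between finite tensor categories that are not assumed fusion and with no exactness hypothesis on the adjoint. Moreover, the repair you suggest --- ``a conservativity argument for $\tilde{R}$'' --- would not suffice even if established: conservativity reflects invertibility of morphisms of the form $\tilde{R}(f)$, whereas the given isomorphism $A\otimes A\simeq A^n$ in $\C_A$ is not a priori $\tilde{R}(f)$ for any $f:F(A)\to \un^n$; what is needed is injectivity of $\tilde{R}$ on isomorphism classes of objects, which an equivalence provides but a merely faithful functor does not. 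The paper closes this implication by a different, self-contained argument that avoids $\kappa$ altogether: from (iii) and the adjunctions $F\dashv R$ and $F_A\dashv U_A$ one computes
$$\Hom_\D(F(A),\un)\simeq \Hom_\C(A,A)\simeq \Hom_{\C_A}(F_A(\un)^n,F_A(\un))\simeq \kk^n,$$
a basis of which assembles into an epimorphism $s:F(A)\to \un^n$; then $s\otimes F(A)$ is an epimorphism between $F(A)\otimes F(A)$ and $F(A)^n$, which are isomorphic objects of finite length (apply $F$ to (iii)), hence $s\otimes F(A)$ is an isomorphism; therefore $\ker(s)\otimes F(A)=0$, so $\ker(s)=0$ and $s$ is an isomorphism $F(A)\iso \un^n$. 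This finite-length/exactness-of-$\otimes$ argument is the missing ingredient that makes the lemma valid for arbitrary tensor functors between finite tensor categories, not just perfect ones.
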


\begin{proof}
 Since $A=R(\un)$, we have (i) $\iff$ (ii).
 If $A$ is in $\KER_F$, then $F_A(A) \simeq \kappa F(A) \simeq \kappa F(\un)^n \simeq F_A(\un)^n$, \emph{i.e.} $A$ is self-trivializing, so  (ii) $\implies$ (iii). Conversely, assume $A$ is self-trivializing, that is, $A \otimes A \simeq A^n$ as right $A$\ti modules.
By adjunction,  $\Hom_\D(F(A),\un)\simeq \Hom_\C(A,R(\un)) = \Hom_\C(A,A)\simeq\Hom_{\C_A}(F_A(A),F_A(\un)) \simeq \Hom_{\C_A}(F_A(\un)^n,F_A(\un)) \simeq \Hom_\C(\un,A)^n \simeq \kk^n$. Thus, there exists an epimorphism $s: F(A) \to \un^n$ in $\D$.
Now $s \otimes F(A)$ is an epimorphism $F(A) \otimes F(A) \to F(A)^n$, and since those two objects are isomorphic and of finite length, $s\otimes F(A)$ is an isomorphism. Thus $\ker(s)\otimes F(A)=0$,  so $\ker(s) = 0$ and $s$ is an isomorphism $F(A) \iso \un^n$. This shows (iii) $\implies$ (ii).

 If the assertions of the lemma hold, then $F(A) = FR(\un)
\simeq F(\rdual{L(\un)}) \simeq \rdual{F(L(\un))}$. Hence
$\ldual{F(A)} \simeq H \otimes \un$, and we get $n=\dim H$.
\end{proof}

\begin{remark} A terminological summary might help: if $F : \C \to \D$ is a tensor functor between finite tensor categories, the induced central algebra $\A=(A,\sigma)$
of $F$ is a commutative algebra in $\Z(\C)$; the induced algebra
$A$ of $F$ is an algebra in $\C$. If $F$ is normal, with induced
Hopf algebra $H= \coend(\omega_F)$, then $F(A) \simeq H
\otimes \un$ and, denoting by $T = FL$ the Hopf monad of $F$, we
have $T(\un) \simeq  \Kh \otimes \un$ with $ \Kh =
H^*$.
\end{remark}

%
%

\subsection{Exact sequences of tensor categories}

In this section we recall some basic facts about exact sequences
of tensor categories which we will use throughout this paper,  see
\cite{tensor-exact} for details. Let $\kk$ be a field.

An \emph{exact sequence of tensor categories} over $\kk$ is a diagram of tensor functors
$$(\E) \qquad\C' \overset{i}\toto \C \overset{F}\toto \C''$$
between tensor categories $\C'$, $\C$, $\C''$ over $\kk$, such
that $F$ is normal  and dominant , $i(\C') \subset \KER_F$
and $i$ induces a tensor equivalence $\C' \to \KER_F$.

The induced Hopf algebra $H = \coend(\omega_F)$  of $F$ is also
called the \emph{induced Hopf algebra of $(\E)$,} and we have an
equivalence of tensor categories $\C'' \simeq \CoRep H$. \eab By
\cite[Proposition 3.15]{tensor-exact}, the induced Hopf algebra of
$(\E)$ is finite dimensional if and only if the tensor functor $F$
has a left adjoint, or equivalently, a right adjoint. In that
case, we say that $(\E)$ is \emph{perfect} if $F$ is perfect, that
is, $R$ (or $F$) is exact.

If $(\E_1) = (\C'_1 \to \C_1 \to \C''_1)$ and $(\E_2) = (\C'_2 \to
\C_2 \to \C''_2)$ are  two exact sequences of tensor categories
over $\kk$, a \emph{morphism of exact sequences of tensor
categories from  $(\E_1)$ to $(\E_2)$} is a diagram of
tensor functors:
$$\xymatrix{
\C_1' \ar[r]\ar[d] &\C_1 \ar[r]\ar[d] &\C_1''\ar[d] \\
\C_2' \ar[r] &\C_2 \ar[r] & \C_2'' }$$ which commutes up
to tensor isomorphisms. Such a morphism induces a morphism of Hopf
algebras $w :  H_1 \to H_2$, where  $H_1$ and $H_2$
 denote the induced Hopf coalgebras of  $(\E_1)$ and of
$(\E_2)$, respectively.

A morphism of exact sequences of tensor categories is  an
\emph{equivalence of exact sequences of tensor categories} if the
vertical arrows are equivalences.


%


\begin{lemma}\label{exact-dominant} Consider a morphism of exact sequences of finite tensor categories
$$\xymatrix{
(\E_1) \ar[d] &\C_1' \ar[r]^{i_1} \ar[d]_{U'} & \C_1 \ar[r]^{F_1}\ar[d]^{U} &  \C_1'' \ar[d]^{U''}\\
(\E_2) & \C_2' \ar[r]_{i_2} & \C_2 \ar[r]_{F_2} &\C_2''.
}$$
Denote by $H_1$, $H_2$ the induced Hopf algebras of $F_1$ and $F_2$ respectively.
\begin{enumerate}
\item The morphism of exact sequences induces a Hopf
algebra morphism  $\phi : H_1 \to H_2$, and $U'$ is dominant (
respectively,  an equivalence) if and only if $\phi$ is a
monomorphism ( respectively,   and isomorphism);
\item If $U'$ and $U''$ are dominant, then so  is $U$.
\end{enumerate}
 \end{lemma}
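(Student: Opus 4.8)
The plan is to obtain $\phi$ from the compatibility of the two fibre functors and then to read its properties off through the Tannakian equivalences $\C_j'\simeq\CoRep{H_j}$. First I would note that for $x\in\C_1'\simeq\KER_{F_1}$ the functor $U''$ sends the scalar space $\omega_{F_1}(x)=\Hom_{\C_1''}(\un,F_1i_1(x))$ to $\omega_{F_2}(U'(x))=\Hom_{\C_2''}(\un,F_2i_2U'(x))$, using $F_2U\cong U''F_1$ and $Ui_1\cong i_2U'$; since $F_1i_1(x)$ is trivial and a tensor functor acts as the identity on $\Hom(\un,\un^n)=\kk^n$, this map is an isomorphism $\omega_{F_1}(x)\iso\omega_{F_2}(U'(x))$, natural in $x$. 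Hence $\omega_{F_1}$ acquires an $H_2$-comodule structure, and the universal property of the coend $H_1=\coend(\omega_{F_1})$ produces a unique coalgebra map $\phi\colon H_1\to H_2$; compatibility with products, coproducts and antipodes is automatic because every piece of data is monoidal, so $\phi$ is a morphism of Hopf algebras. By construction $U'$ is identified, under $\C_j'\simeq\CoRep{H_j}$, with the functor of comodule categories attached to $\phi$.

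With this identification part (1) becomes a statement about the functor attached to $\phi$, which I would settle via the Tannakian dictionary of \cite{tensor-exact}. For the equivalence clause: if $\phi$ is an isomorphism the attached functor is an isomorphism of categories, so $U'$ is an equivalence; conversely an equivalence $U'$ compatible with the fibre functors forces, by the uniqueness built into reconstruction, $H_1\cong H_2$ with $\phi$ the comparison isomorphism. For the dominance clause, $U'$ is dominant precisely when the objects $U'(x)$ generate $\C_2'$ under subobjects; by the dictionary of \cite{tensor-exact} this corresponds to $\phi$ being a monomorphism, which I would quote rather than re-derive through the coefficient coalgebra.

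For part (2) I would avoid the Hopf algebras and show directly that every $c\in\C_2$ embeds into some $U(c_1)$. Applying $F_2$ and using that $U''$ and $F_1$ are dominant (with $F_2U\cong U''F_1$ and $U''$ exact) gives a monomorphism $m\colon F_2(c)\hookrightarrow F_2U(c_0)$ for some $c_0\in\C_1$. Its adjunct under $F_2\dashv R_2$ is $\widetilde m=R_2(m)\circ\eta_c\colon c\to R_2F_2U(c_0)$, and this is again a monomorphism: the unit $\eta_c$ is monic because $F_2$ is faithful and exact, so $F_2(\ker\eta_c)=0$ forces $\ker\eta_c=0$, while $R_2(m)$ is monic since right adjoints are left exact. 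The projection formula for the strong monoidal $F_2$ gives $R_2F_2U(c_0)\cong U(c_0)\otimes A_2$ with $A_2=R_2(\un)$; by normality of $F_2$ and Lemma~\ref{normal-algebra}, $A_2$ lies in $\KER_{F_2}\simeq\C_2'$, say $A_2\cong i_2(a)$, so dominance of $U'$ gives a monomorphism $A_2\cong i_2(a)\hookrightarrow i_2U'(a')\cong Ui_1(a')$. Tensoring on the left by $U(c_0)$ and using that $U$ is strong monoidal yields $c\hookrightarrow U(c_0)\otimes A_2\hookrightarrow U\bigl(c_0\otimes i_1(a')\bigr)$, so $U$ is dominant.

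I expect the genuine obstacle to sit in part (2), in lifting dominance from the quotient $\C_2''$ back to $\C_2$, since a monomorphism in $\C_2''$ need not a priori lift along $F_2$. The two facts that remove it are that the adjunction unit $\eta\colon\id\to R_2F_2$ is monic and that $R_2F_2(-)\cong(-)\otimes A_2$, which confines the discrepancy to the kernel object $A_2$ and lets $U'$ finish the argument. In part (1) the only subtle input is the dictionary of \cite{tensor-exact} equating dominance of $U'$ with injectivity of $\phi$; the construction of $\phi$ and the equivalence/isomorphism clause are formal.
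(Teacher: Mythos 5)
Your part (2) is, up to inessential variations, exactly the paper's argument: dominance of the composite $F_2U \cong U''F_1$, application of the right adjoint $R_2$ (which preserves monomorphisms), the identification $R_2F_2 \simeq A_2 \otimes -$ coming from the Hopf monoidal adjunction, membership of $A_2$ in the essential image of $i_2$ (via normality and Lemma~\ref{normal-algebra}), and dominance of $U'$ to embed $A_2$ into $Ui_1(a')$. The only cosmetic difference is that the paper embeds $c$ into $A_2 \otimes c$ directly from $\un \subset A_2$ rather than through monicity of the unit $\eta_c$, which is the same point. Likewise your construction of $\phi$ in part (1), via the natural isomorphism $\omega_{F_1} \iso \omega_{F_2}U'$ and the universal property of the coend, is sound and merely spells out what the paper compresses into ``we may assume $U'$ is of the form $\phi_*$''; the equivalence clause is handled the same way in both.

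The genuine problem is the dominance clause of part (1). You assert, quoting the dictionary of \cite{tensor-exact}, that dominance of $U'$ corresponds to $\phi$ being a monomorphism. The reference actually says the opposite: by \cite[Remark 3.12]{tensor-exact} --- which is precisely what the paper's proof cites --- the corestriction functor $\phi_*: \CoRep{H_1} \to \CoRep{H_2}$ is dominant if and only if $\phi$ is \emph{surjective}. The monomorphism version is false: the unit $\kk \to H_2$ is injective, yet the induced functor $\vect_\kk \to \CoRep{H_2}$ lands in the trivial comodules and is not dominant unless $H_2 = \kk$; conversely the counit $H_1 \to \kk$ is surjective but not injective, and induces the fibre functor $\CoRep{H_1} \to \vect_\kk$, which \emph{is} dominant. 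The word ``monomorphism'' in the printed statement of Lemma~\ref{exact-dominant} is a misprint for ``epimorphism'', as the later use of the lemma confirms: in the proof of Corollary~\ref{dom-equiv}, dominance of $W$ is invoked to conclude that $\phi: H_0 \to H$ is \emph{surjective}, whence $H$ is commutative and $G = \Spec H$ is a subgroup scheme of $G_0$ --- none of which would follow from injectivity. So you inherited a typo rather than invented an error, but as written your part (1) asserts a false equivalence, and the citation you lean on does not support it; a quick test on the two extreme Hopf algebra maps above, or a glance at how the lemma is applied downstream, would have flagged the discrepancy.
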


\begin{proof}  1) We may assume $\C'_1 = \CoRep H_1$, $\C'_2 = \CoRep H_2$,  $U'$ being compatible with the forgetful functors. Then $U'$ is of the form $\phi_*$,
for some Hopf algebra morphism $\phi : H_1 \to H_2$,  $U'$ is an
equivalence if and only if $\phi$ is an isomorphism, and by
\cite[Remark  3.12]{tensor-exact}, $U'$ is dominant if and
only if $\phi$ is surjective.

2) Let $X$ be an object of $\C_2$. We are to show that $X$ is a
subobject of $U(Y)$ for some $Y \in \Ob(\C_1)$. Now  $F_2 U =
U'' F_1$ is dominant, so $F_2(X) \subset F_2 U(Z)$ for some $Z
\in \Ob(\C_1)$. Denote by $R_2$ the right adjoint of $F_2$; being
a right adjoint, it preserves monomorphisms so $R_2F_2(X) \subset
R_2 F_2 U(Z)$. We have $A_2= R_2(\un)$, and we have an isomorphism
$RF_2 \simeq A_2 \otimes ?$ coming from the fact that the
adjunction $(F_2,R)$ is a Hopf monoidal adjunction (see
\cite[Proof of Proposition 6.1]{tensor-exact}, and \cite{BLV} for
the original statement in terms of Hopf monads).

Thus $A_2 \otimes X \subset A_2 \otimes U(Z)$. Since we have $\un
\subset A_2$,  we obtain $X \subset A_2 \otimes U(Z)$. On the
other hand, $A_2$ belongs to the essential image of $ i_2$
and $U'$ is dominant, so there exists $Z' \in \Ob(\C'_1)$ such
that $A_2 \subset  i_2  U'(Z') \simeq U i_1 (Z')$.
Consequently $X \subset U( i_1 (Z') \otimes Z)$, which
shows that $U$ is dominant, as claimed.
\end{proof}

\subsection{Central exact sequences of finite tensor categories}

If $i: \C' \to \C$ is a strong monoidal functor between monoidal categories, a \emph{central lifting of $i$} is a
strong monoidal functor $\tilde{i}: \C' \to \Z(\C)$ such that $\U\,\tilde{i} = i$, where $\U$ denotes the forgetful functor $\Z(\C) \to \C$.
Note that if $i$ is full, given a central lifting $\tilde{i}$ there exists a unique braiding on $\C'$ such that $\tilde{i}$ is braided.

An exact sequence of finite tensor categories $\C'
\overset{i}\toto \C \overset{F}\toto \C''$, with induced central
algebra $\A=(A,\sigma)$,  is  called  \emph{central} if the
restriction of the forgetful functor $\U: \Z(\C) \to \C$ induces
an equivalence of categories  $\bra{\A} \to \bra{A}$.

\begin{theorem}
 Consider
 an exact sequence of finite tensor categories

 $$(\E) \quad \C' \overset{i}\toto \C \overset{F}\toto \C''$$ with induced central algebra $(A,\sigma)$ and induced central coalgebra $(\hat{C},\hat{\sigma})$.
The following assertions are equivalent:
\begin{enumerate}[(i)]
\item The exact sequence $(\E)$ is central;
\item There exists a central lifting $\tilde{i}$ of $i$ such that $\tilde{i}(A)= (A,\sigma)$;
\item There exists a central lifting $\tilde{i}$ of $i$ such that $\tilde{i}(\hat{C}) = (\hat{C},\hat{\sigma})$.
\end{enumerate}
Moreover, if these assertions hold, the central liftings $\tilde{i}$ of $i$ appearing in assertions (ii) and (iii) are essentially unique and they coincide.
\end{theorem}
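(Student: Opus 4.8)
The plan is to reduce everything to the single structural fact that the induced algebra $A$ tensor-generates $\KER_F$, and then to carry out bookkeeping with the faithful tensor functor $\U$. First I would fix conventions and isolate the key lemma. By Lemma \ref{normal-algebra} we have $A\in\KER_F$, so via the equivalence $i\colon\C'\iso\KER_F$ I regard $A$ and its left dual $\hat C=\ldual A$ as objects of $\C'$. Since $\U\colon\Z(\C)\to\C$ is a faithful tensor functor, it restricts to a faithful tensor functor $\U\colon\bra\A\to\bra A$, so the entire content of centrality (i) is the \emph{essential surjectivity and fullness} of this restriction. The key lemma I would establish is that $\bra A=\KER_F=i(\C')$: since $F(A)\simeq\un^{\dim H}$ (see the Remark after Lemma \ref{normal-algebra}), under $\KER_F\simeq\CoRep H$ the object $A=R(\un)$ represents $x\mapsto\omega_F(x)^{*}$ and is therefore the regular comodule $H$; as $H\otimes V\simeq H^{\oplus\dim V}$ and every comodule embeds into such a cofree comodule, $A$ tensor-generates $\KER_F$. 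I would also record that any central lifting $\tilde i$ is \emph{fully faithful and exact}: it is exact as a tensor functor, faithful because $\U\tilde i=i$ is, and full because for $g\colon\tilde i c\to\tilde i d$ the morphism $\U g$ equals $i(h)$ for a unique $h$ (as $i$ is full), whence $g=\tilde i(h)$ (as $\U$ is faithful).

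I would treat (ii)$\iff$(iii) by duality. In $\Z(\C)$ the induced central algebra and coalgebra are mutually right dual, $(A,\sigma)=\rdual{(\hat C,\hat\sigma)}$, and $A=\rdual{\hat C}$ in $\C$. A central lifting $\tilde i$ is strong monoidal, hence preserves right duals, so $\tilde i(A)\simeq\rdual{\tilde i(\hat C)}$; since right duals are unique up to unique isomorphism, $\tilde i(A)=(A,\sigma)$ holds exactly when $\tilde i(\hat C)=(\hat C,\hat\sigma)$. This proves (ii)$\iff$(iii) and shows the liftings of (ii) and (iii) coincide.

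For (ii)$\implies$(i): given $\tilde i$ with $\tilde i(A)=\A$, the image $\tilde i(\C')$ contains $\A$ and is closed under tensor products and duals; I would show it is closed under subquotients, hence equals $\bra\A$. Given a subobject $Z\subseteq\tilde i(c)$ in $\Z(\C)$, the subobject $\U Z\subseteq i(c)$ lies in $\KER_F=i(\C')$ (a tensor subcategory, closed under subobjects), so $\U Z=i(c')$ for some $c'\subseteq c$; then $Z$ and $\tilde i(c')$ are subobjects of $\tilde i(c)$ with the same image under the faithful exact functor $\U$ (which reflects isomorphisms and preserves intersections), whence $Z=\tilde i(c')$. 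Thus $\tilde i\colon\C'\to\bra\A$ is a tensor equivalence and $\U|_{\bra\A}=i\circ\tilde i^{-1}$ is an equivalence, i.e.\ $(\E)$ is central. For (i)$\implies$(ii): let $s\colon\bra A\to\bra\A$ be a monoidal quasi-inverse of the tensor equivalence $\U|_{\bra\A}$ and set $\tilde i=s\circ i\colon\C'\to\bra\A\subseteq\Z(\C)$, which is legitimate precisely because $\bra A=i(\C')$ (the key lemma) makes $s\circ i$ defined on all of $\C'$. Then $\tilde i$ is strong monoidal, $\U\tilde i\simeq i$, and $\tilde i(A)=s(A)\simeq\A$ since $s\simeq(\U|_{\bra\A})^{-1}$ and $\U(\A)=A$.

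Finally, for essential uniqueness, if $\tilde i_1,\tilde i_2$ are central liftings with $\tilde i_k(A)=\A$, then both factor through $\bra\A$ and, with $s$ as above satisfying $s\,\U\simeq\id_{\bra\A}$, one gets $\tilde i_k\simeq s\,\U\tilde i_k=s\circ i$; hence $\tilde i_1\simeq\tilde i_2$, and by the duality paragraph they agree with the lifting furnished by (iii). I expect the main obstacle to be the key lemma $\bra A=\KER_F$—that the induced algebra tensor-generates the kernel—since it is the one genuinely nonformal input; once that generation statement is in hand, all remaining steps are formal manipulations with the faithful exact tensor functor $\U$.
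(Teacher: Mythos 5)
Your proposal is correct and follows essentially the same route as the paper's own proof: (ii)$\iff$(iii) via preservation of right duals by strong monoidal functors, (i)$\implies$(ii) by composing $i$ with a monoidal quasi-inverse of the restriction $\bra{\A}\to\bra{A}$ of the forgetful functor, and (ii)$\implies$(i) by exhibiting the lifting as a quasi-inverse of that restriction. The differences are purely expository: you supply proofs of two facts the paper uses implicitly (the generation statement $\bra{A}=\KER_F$ via the regular-comodule description of $A$, and the closure of the image of $\tilde{i}$ under subquotients, which is the real content behind the paper's one-line claim that $j$ is full and essentially surjective), while your liftings satisfy $\U\tilde{i}\simeq i$ and $\tilde{i}(A)\simeq(A,\sigma)$ only up to isomorphism, a discrepancy removed by the same strictification the paper performs when it imposes $\U k(X)=X$ and $k(A)=(A,\sigma)$.
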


The central lifting $\tilde{i}$ is called the \emph{canonical central lifting of the central exact sequence $(\E)$.}

\begin{proof}
Assertions (ii) and (iii) are equivalent because $(A,\sigma)$ is the right dual of $(\hat{C},\hat{\sigma})$ and strong monoidal functors preserve duals.

Denote by $j$ the tensor functor $\bra{(A,\sigma)} \to \bra{A}$ induced by the forgetful functor $\U$. In particular $j(\A)=A$.

We have (ii) $\implies$ (i) because if $\tilde{i}$ is a central
lifting of $i$ such that $\tilde{i}(A) = (A,\sigma)$, then
$\tilde{i}(\bra{A}) \subset \bra{(A,\sigma)}$ and by definition of
a central lifting, $j\tilde{i} = \id_{\bra{A}}$. This shows that
$j$ is full and essentially surjective; since on the other hand
$j$ is faithful, it is therefore an equivalence, with
quasi-inverse $ \tilde{i}$.

We have (i) $\implies$ (ii) because if $j$ is an equivalence, then it admits a quasi-inverse $k$, which is also a tensor functor.
One defines $k$ by picking for each object $X$ of $\bra{A}$ an object $k(X)$ in $\Z(\C)$ such that $\U k(X) \simeq X$. One may further impose that
$\U k(X)=X$, and $k(A) = (A,\sigma)$. Then $\tilde{i}: \bra{A} \to \Z(\C), X \mapsto k(X)$ is a central lifting of $i$ sending $A$ to $(A,\sigma)$,
which proves assertion (ii).

If $\tilde{i}$ exists, it is a quasi-inverse of $j$ and as such, it is essentially unique.
\end{proof}


\begin{example}\label{equiv-nec}
If $G$ is a finite group acting on a tensor category $\C$ by
tensor autoequivalences, then the  corresponding exact sequence of
tensor categories
$$\rep G \overset{i}\toto \C^G \overset{F}\toto \C$$ is central.
Indeed, if $(V,r)$ is a representation of $G$ and $(X,\rho)$ an
object of $\C^G$, one verifies that the trivial
isomorphism $V \otimes X \iso X \otimes  V$ lifts to an
isomorphism $\sigma_{(V,r),(X,\rho)} = i(V,r) \otimes (X,\rho)
\simeq (X,\rho) \otimes i(V,r)$ in $\C^G$, and this defines a
central lifting  $\tilde{i} : \rep G \to \Z(\C^G)$ of $i$, $(V,r)
\mapsto  ((V,r), \sigma_{(V,r), -})$. Moreover, the
induced central algebra $(A,\sigma)$  of $F$ is defined by
$A=\kk^G$, with $G$-action defined by  right translations,
and $\sigma=\sigma_{A, -}$, hence centrality of the exact
sequence.
\end{example}

If $\C' \to \C \to \C''$ is a central exact sequence of tensor categories, then $\C'$ is symmetric. More precisely, we have the following lemma.

\begin{proposition}\label{tannakian} Consider a central exact sequence
$$(\E) \qquad \C' \overset{i}\toto \C \overset{F}\toto \C'',$$
with canonical central  lifting $\tilde{i}$. Then the induced Hopf
algebra $H$ of $(\E)$ is commutative, so that $\C' \simeq \CoRep
H$ is endowed with a symmetry, and with this symmetry on $\C'$,
the tensor functor $\tilde{i}: \C' \to \Z(\C)$ is braided. If in
addition $H$ is split semisimple, then $G=\Spec H$ is a discrete
finite group and $\C \simeq \rep G$.
\end{proposition}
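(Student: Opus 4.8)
The plan is to put on $\C'$ the braiding supplied by the central lifting and to prove that, transported through the fibre functor $\omega_F$, this braiding is the ordinary flip of vector spaces; this single fact yields all three assertions at once, since the flip is a braiding of $\CoRep H \simeq \C'$ exactly when $H$ is commutative. First I would apply the remark following the definition of a central lifting: as $i$ is a full embedding, there is a unique braiding $c$ on $\C'$ making $\tilde{i} : \C' \to \Z(\C)$ braided, so $\tilde{i}$ is braided by construction. Under the reconstruction equivalence $\omega_F : \C' \iso \CoRep H$ the braiding $c$ corresponds to a cobraiding $r : H \otimes H \to \kk$, and the whole proposition comes down to proving $r = \eps \otimes \eps$: once this holds, the coquasitriangularity axiom relating $r$ to the product of $H$ degenerates to $ab = ba$, forcing $H$ commutative, and $c$ becomes the flip, i.e. the canonical symmetry of $\CoRep H$, for which $\tilde{i}$ is braided.

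Next I would reduce the identity $r = \eps \otimes \eps$ to a computation on the induced algebra $A = R(\un)$. Since $F$ is normal, $A$ belongs to $\KER_F \simeq \C'$ by Lemma~\ref{normal-algebra}, and $\omega_F(A) = \Hom(\un, F(A)) = \Hom(\un, H \otimes \un) = H$, with $A$ corresponding to the regular $H$-comodule. The self-braiding of the regular comodule encodes $r$, so that applying $\eps \otimes \eps$ to the comodule-braiding formula recovers $r$ from $\omega_F(c_{A,A})$. Hence it suffices to show that $\omega_F(c_{A,A})$ is the flip of $H \otimes H$, which then gives $r = \eps \otimes \eps$ at once.

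Computing $\omega_F(c_{A,A})$ is the heart of the argument and the step I expect to require real work. Because $\tilde{i}$ is braided and $\tilde{i}(A) = (A,\sigma)$, the image of $c_{A,A}$ in $\C$ is the braiding of $\Z(\C)$ between $(A,\sigma)$ and itself, namely the half-braiding $\sigma_{A,A}$; under the canonical trivialisation $F(A) \simeq H \otimes \un$ this gives $\omega_F(c_{A,A}) = F(\sigma_{A,A})$. It remains to prove that $F(\sigma_{A,A})$ is the flip on $H \otimes H$. For this I would unwind the construction of the canonical half-braiding $\sigma$ on $A = R(\un)$ from \cite{BLV}: $\sigma$ is assembled from the unit and counit of the adjunction $(F,R)$ together with the strong monoidal structure of $F$, and after applying the strong monoidal functor $F$ and invoking the triangle identities it collapses, on every object, to the canonical flip of the trivial object $F(A) = H \otimes \un$ past $F(-)$. (This is transparent in Example~\ref{equiv-nec}, where $\sigma_{A,-}$ is by definition the lift of the trivial flip.) Verifying this degeneration from the explicit formula for $\sigma$ is the only genuinely technical point; granting it, $\omega_F(c_{A,A})$ is the flip, whence $r = \eps \otimes \eps$ and the first three assertions follow as in the first paragraph.

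Finally, suppose in addition that $H$ is split semisimple. Being commutative and split semisimple, $H \simeq \OO(G)$ where $G = \Spec H$ is a finite reduced affine group scheme, that is, a discrete finite group; the reconstruction equivalence then reads $\C' \simeq \CoRep H \simeq \CoRep{\OO(G)} \simeq \rep G$, which is the last claim (here $\C' = \KER_F$ being the kernel of the sequence).
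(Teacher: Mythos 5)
Your overall strategy is the same as the paper's: equip $\C'$ with the unique braiding $c$ making $\tilde{i}$ braided, translate $c$ into a cobraiding $r$ on $H$, note that everything follows once $r=\eps\otimes\eps$, and reduce this to evaluating the self-braiding of the regular comodule $A$, where centrality gives $i(c_{A,A})=\sigma_A$. Up to that point your reductions are correct and match the paper (which, after replacing $(\E)$ by the sub-sequence $\bra{A}\to\bra{A}\to\bra{\un}$, i.e.\ $\CoRep H\to\CoRep H\to\vect_\kk$, likewise boils the statement down to the single identity $c_{A,A}=\sigma_A$). The final paragraph on the split semisimple case is also fine.

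The gap is your key technical step: the claim that $F(\sigma_{A,A})$ ``collapses to the canonical flip'' by unwinding the construction of $\sigma$ from \cite{BLV} and invoking the triangle identities. This is false as a statement about the construction, and no formal argument can prove it, because the induced central algebra $(A,\sigma)$ exists for \emph{every} dominant normal tensor functor, central or not, while $F(\sigma)$ is the trivial half-braiding precisely when the Hopf monad of $F$ is cocommutative (Lemma \ref{def-cocom}), equivalently (Theorem \ref{cocom-central}) when the sequence is central. If your degeneration held formally, every exact sequence of finite tensor categories would be central, contradicting Remark \ref{rem-tannakian}, which says that $\CoRep H \to \CoRep H \to \vect_\kk$ is central only for commutative $H$. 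Concretely, in that reduced situation the half-braiding is given by \eqref{sigma-H}, $\sigma_V(h\otimes v)= v_{(0)}\otimes S(v_{(1)})h\,v_{(2)}$; applying $F$ (here the forgetful functor) does not change this map, and it is the flip if and only if the adjoint action of $H$ on itself is trivial, i.e.\ if and only if $H$ is commutative --- the very thing to be proven. Your appeal to Example \ref{equiv-nec} is not evidence: there the sequence is an equivariantization, so $H$ is commutative from the start. The correct way to finish --- and it is what the paper does --- is to keep \emph{both} sides of the equation $c_{A,A}=\sigma_A$ explicit: write $c_{A,A}$ via the cobraiding $r$ as in \eqref{r}, write $\sigma_A$ via \eqref{sigma-H}, and apply $\eps\otimes\eps$ to both sides of the equality; this yields $r=\eps\otimes\eps$ directly. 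In other words, centrality must be exploited not only to produce the identity $c_{A,A}=\sigma_A$ (which you do) but also against the explicit, generally non-symmetric formula for $\sigma$; there is no construction-level shortcut.
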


\begin{remark}\label{rem-tannakian} As a special case of Proposition~\ref{tannakian}, for any  finite dimensional Hopf algebra $H$ the corresponding exact sequence
$$\CoRep H \to \CoRep H \to \vect_\kk$$
is central if and only if $H$ is commutative.
\end{remark}

\begin{remark}
Note that if $\C' \to \C \to \C''$ is a central exact sequence
of fusion categories over an algebraically closed field of characteristic $0$, $G= \Spec H$ is a discrete finite group and we have $\C' \simeq \rep G$.
\end{remark}


\begin{proof}[Proof of \ref{tannakian}]   Let $(A,\sigma)$ be the induced central  algebra of $(\E)$.
We may replace $(\E)$ with the equivalent exact sequence $$\KER_F
= \bra{A} \toto \C \toto \C''.$$ Consider the morphism of exact
sequences of tensor categories:
 $$\xymatrix{
\bra{A} \ar[r] \ar[d]_{=} & \bra{A} \ar[r] \ar[d]_{\text{incl.}} & \bra{\un}\ar[d]^{\text{incl.}}\\
\bra{A} \ar[r] &\C \ar[r] &\C''
}$$
Denote by $(\E_0)$ the top exact sequence in this diagram. Its induced central algebra is $(A,\sigma_{\mid \bra{A}})$.
Moreover it is
central, with canonical lifting $\tilde{i}_0$ defined by $\tilde{i}_0(X)= (X,{s_X}_{\mid \bra{A}})$, where $\tilde{i}(X)=(X,s_X)$
denotes the canonical lifting of $(\E)$.

Thus, it is enough to prove the  theorem  for $(\E_0)$. We
may again replace $(\E_0)$ by the equivalent exact sequence
$$\CoRep  H \to \CoRep H \to \vect_\kk,$$
where $H$ is the induced Hopf algebra of $(\E)$, which is the situation of Remark~\ref{rem-tannakian}.

In that situation, we have $A=(H,\Delta)$ and  the half-braiding $\sigma$ is defined by
\begin{equation}\label{sigma-H}\begin{array}{lllll}
\sigma_{V} &: & H \otimes V & \to & V \otimes H \\
& & h \otimes v & \mapsto & v_{(0)} \otimes S(v_{(1)}) hv_{(2)} \\
\end{array}\end{equation}
in Sweedler's notation, for any finite dimensional right $H$-comodule $V$ (see \cite[Example 6.3]{tensor-exact}).

Centrality of the exact sequence means that we have a central lifting of the identity of $\CoRep H$,
which is nothing but a braiding $c$ on $\CoRep H$, and in addition this braiding is required to be such that $c_{A,?} = \sigma$.
What we have to prove is that $H$ is commutative and $c$ is the standard symmetry.

Let $r: H \otimes H \to \kk$ be the coquasitriangular structure
corresponding  to  this braiding, so that for any pair of
right comodules $V$ and $W$,  we have
\begin{equation}\label{r}c_{V, W}: V \otimes W \to W \otimes V,
\;  c_{V, W} (v \otimes w) = r(v_{(1)}, w_{(1)})w_{(0)}
\otimes v_{(0)}. \end{equation} Comparing \ref{sigma-H} and
\ref{r}, we obtain by a straightforward computation that the
condition $c_{A,A} = \sigma_A$ implies $r = \varepsilon \otimes
\varepsilon$. That means that the forgetful functor $\CoRep H \to
\vect_\kk$ is braided, so $H$ is commutative and $c$ is
the standard symmetry. This concludes the proof of the
proposition.
\end{proof}

\subsection{Normality and centrality criteria}\label{criterio}

The following theorem gives a sufficient condition for a dominant
tensor functor between  finite tensor  categories to be
normal in terms of the induced central algebra.

\begin{theorem}\label{sum-inv} Let $F: \C \to \D$ be a dominant tensor functor between
finite tensor categories $\C, \D$, and let $(A, \sigma)$ be its induced
central algebra. Assume that  $A$ decomposes as a direct
sum of invertible objects of $\C$. Then:
\begin{enumerate}[(i)]
\item The functor $F$ is normal,
the isomorphism classes of simple direct summands of $A$ form a group $\Gamma$, and we have an exact sequence of tensor categories
$$(\E) \qquad \Gamma\ti\vect  \toto \C \overset{F}\toto \D,$$
where $\Gamma\ti\vect$ denotes the tensor category of finite dimensional $\Gamma$-graded vector spaces.
\item If in addition $(A,\sigma)$ decomposes as a direct sum of invertible objects in $\Z(\C)$, then the exact sequence ($\E$) is central.
\end{enumerate}
\end{theorem}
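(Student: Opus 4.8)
The plan is to treat (i) and (ii) in turn, deducing normality and the group structure of the simple summands of $A$ from the interplay between the adjunction $F\dashv R$ and the induced Hopf algebra $H=\coend(\omega_F)$, and then upgrading to centrality in $\Z(\C)$ by exploiting the algebra structure of $\A=(A,\sigma)$. To begin (i), write $A=R(\un)=\bigoplus_{g\in S}X_g$ as a finite direct sum of pairwise non-isomorphic invertible objects, $S$ being the set of isomorphism classes of simple summands. For each $g$, the inclusion $X_g\hookrightarrow R(\un)$ is the mate under $F\dashv R$ of a nonzero morphism $F(X_g)\to\un$. Since $F$ is strong monoidal it sends the invertible object $X_g$ to an invertible, hence simple, object of $\D$; a nonzero morphism between the simple objects $F(X_g)$ and $\un$ is therefore an isomorphism $F(X_g)\iso\un$. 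Consequently $F(A)\cong\un^{|S|}$ is trivial, so $A\in\KER_F$ and $F$ is normal by Lemma~\ref{normal-algebra}; moreover every $X_g$ lies in $\KER_F$.

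Next I would pin down $\KER_F$. As recorded in the remark following Lemma~\ref{normal-algebra}, normality gives $F(A)\cong H\otimes\un$, so that $\dim_\kk H=\dim_\kk\Hom(\un,F(A))=|S|$. Under the equivalence $\KER_F\simeq\CoRep H$ the objects $X_g$ are pairwise non-isomorphic invertible comodules with $\omega_F(X_g)\cong\Hom(\un,F(X_g))\cong\kk$, i.e.\ pairwise distinct group-like elements of $H$; as distinct group-likes are linearly independent, $|S|\le\#G(H)\le\dim_\kk H=|S|$. Hence the group-likes form a basis of $H$, so $H=\kk[\Gamma]$ with $\Gamma=G(H)=S$, and in particular $S=\Gamma$ is a group under $\otimes$, matching the product of group-likes. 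Therefore $\KER_F\simeq\CoRep{\kk\Gamma}=\Gamma\ti\vect$, and $\Gamma\ti\vect\toto\C\overset{F}\toto\D$ is an exact sequence of tensor categories, which proves (i).

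For (ii), write the central decomposition $\A=(A,\sigma)=\bigoplus_{g\in\Gamma}\hat X_g$ with each $\hat X_g$ invertible in $\Z(\C)$; applying the faithful forgetful functor $\U:\Z(\C)\to\C$ and Krull--Schmidt, $\U(\hat X_g)\cong X_g$. The key point is that these lifts are coherent, i.e.\ $\hat X_g\otimes\hat X_h\cong\hat X_{gh}$. For this I would use that the multiplication $m$ of $\A$ is a morphism in $\Z(\C)$ and first show its component $X_g\otimes X_h\to A$ is nonzero downstairs: its mate under $F\dashv R$ is, by strong monoidality of $F$ (equivalently, monoidality of the adjunction), the isomorphism $F(X_g)\otimes F(X_h)\to\un$ assembled from the two isomorphisms $F(X_g)\iso\un$ and $F(X_h)\iso\un$ of part (i), hence nonzero. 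Since $\U$ is faithful, the component $\hat X_g\otimes\hat X_h\to\A$ in $\Z(\C)$ is then a nonzero morphism out of the simple (invertible) object $\hat X_g\otimes\hat X_h$, so by Schur's lemma it is a split monomorphism onto a summand of $\A$; comparing images under $\U$ forces $\hat X_g\otimes\hat X_h\cong\hat X_{gh}$.

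It remains to see that $\U$ induces an equivalence $\bra\A\to\bra A$, which is precisely the definition of centrality for $(\E)$. Every object of $\bra\A$ is a subquotient of a finite direct sum of tensor products of the $\hat X_g$ and their duals; by the previous step each such tensor product is invertible, hence simple, so every object of $\bra\A$ is a subquotient of a semisimple object and is thus a direct sum of the $\hat X_g$. Hence $\bra\A$ is semisimple with simple objects exactly $\{\hat X_g\}_{g\in\Gamma}$, while $\bra A=\Gamma\ti\vect$. The functor $\U$ is faithful, carries the simples $\hat X_g$ bijectively to the simples $X_g$, and is an isomorphism on Hom-spaces between simples (each side being $\delta_{gh}\kk$); between semisimple categories this makes $\U$ fully faithful and essentially surjective, hence an equivalence $\bra\A\iso\bra A$, so $(\E)$ is central. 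I expect the main obstacle to be the nonvanishing of the multiplication component $X_g\otimes X_h\to A$: it is exactly what excludes ``square-zero''-type lifts and secures the coherence $\hat X_g\otimes\hat X_h\cong\hat X_{gh}$, and the cleanest way I see to obtain it is the mate computation above, which genuinely uses that $F$ is strong monoidal. Everything else is either formal (normality, the group-like count) or a general fact about subquotients of semisimple objects.
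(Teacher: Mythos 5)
Your argument is correct in substance and follows the same two-step strategy as the paper's proof: normality comes from the adjunction identification $\Hom_\C(g,A)\simeq\Hom_\D(F(g),\un)$ together with simplicity of invertibles, and centrality comes from the fact that the multiplication of $\A$ exhibits $\hat X_g\otimes\hat X_h$ as a direct summand of $\A$, whence $\bra{\A}$ is additively generated by the $\hat X_g$ and the forgetful functor restricts to an equivalence $\bra{\A}\to\bra{A}$. Two differences are worth recording. First, to identify $\KER_F$ you count group-like elements of $H=\coend(\omega_F)$ and conclude $H=\kk\Gamma$, whereas the paper argues that $\KER_F=\bra{A}$ is a pointed tensor category admitting a fiber functor, hence equivalent to $\Gamma\ti\vect$; both routes work. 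Second, in part (ii) the paper merely \emph{asserts} that ``the product $\mu:A\otimes A\to A$ embeds $g_i\otimes g_j$ into $A$''; your mate computation --- the mate of $\mu$ restricted to $X_g\otimes X_h$ is, up to the structure isomorphism of $F$, the tensor product of the two isomorphisms $F(X_g)\iso\un$ and $F(X_h)\iso\un$, hence nonzero --- is exactly the justification the paper omits, so on this point your write-up is more complete than the original.

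There is, however, one genuine (though easily repaired) gap: at the outset you decree that $A=\bigoplus_{g\in S}X_g$ with \emph{pairwise non-isomorphic} summands. The hypothesis of the theorem only provides a decomposition into invertibles, possibly with repetitions, and multiplicity-freeness is not innocuous for your argument: the count $|S|\le\#G(H)\le\dim_\kk H=|S|$ rests on $F(A)\cong\un^{|S|}$, which fails if some $X_g$ occurs more than once (you would only get $\dim_\kk H=\sum_g m_g>|S|$, and could no longer conclude that $H=\kk\Gamma$). The fix is one line, by the same adjunction you already use: $\Hom_\C(X_g,A)\simeq\Hom_\D(F(X_g),\un)\simeq\End_\D(\un)=\kk$, so each invertible summand occurs with multiplicity exactly one; this is in fact the paper's opening computation, which simultaneously shows that the invertible summands of $A$ are precisely the invertible objects of $\C$ trivialized by $F$. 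A further, very minor, point in part (ii): to know that every tensor word in the $\hat X_g$ \emph{and their duals} is again some $\hat X_k$, you need $\rdual{\hat X_g}\cong\hat X_{g^{-1}}$, which requires $\hat X_e\cong\un$; this does follow from your coherence, since $\hat X_e\otimes\hat X_e\cong\hat X_e$ forces $\hat X_e\cong\un$, but it should be said.
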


\begin{proof} An invertible object in a tensor category is both simple and scalar.
Let $R$ denote the right adjoint of $F$. For any
invertible object $g$ of $\C$, we have by adjunction $\dim
\Hom_\C(g,A) =\dim \Hom_\D(F(g),\un)$,  because $A=R(\un)$.
 Now $F(g)$ is
invertible in $\D$, so $\dim \Hom_\C(g,A) = 1$ if $F(g) \simeq
\un$, and $\dim \Hom_\C(g,A) = 0$ otherwise.

In other words: (1) the invertible factors of $A$ are exactly the invertible objects of $\C$ which are trivialized by $F$; therefore, they form a group for the tensor product; and (2) their multiplicity in $A$ is exactly one.

In particular if $A$ is the direct sum of its invertible factors, then $A$ itself is trivialized by $F$, that is, $F$ is normal. In that case, we have an exact sequence $\KER_F \to \C \to \D$.
Now $\KER_F$ is the tensor subcategory of $\C$ generated by $A$; it is a pointed tensor category whose invertible objects are the invertible factors of $A$, whose isomorphism classes form a group $\Gamma$. So $\KER_F$ is a pointed tensor category; in addition, $\KER_F$ admits a fiber functor, hence it is tensor equivalent to the category $\Gamma\ti\vect$. This proves assertion (i).

Now assume that $(A,\sigma)$ decomposes as a direct sum of
invertible objects of $\Z(\C)$,  that is,  $(A,\sigma) =
\bigoplus_{i=1}^{n} (g_i,\sigma_i)$. Then $A = \bigoplus_{i=1}^{n}
g_i$, where the $g_i$'s are invertible in $A$, so that the first
part of the theorem applies. The category $\bra{(A,\sigma)}$ is
generated as a tensor category by the $(g_i,\sigma_i)$.  Let us
show that $\bra{(A,\sigma)}$ is \emph{additively} generated by the
$(g_i,\sigma_i)$. For this it is enough to show that
$(g_i,\sigma_i) \otimes (g_j,\sigma_j)$ is a direct factor of
$(A,\sigma)$ for all $i, j \in \{1, \dots n\}$. Now the product
$\mu: A \otimes A \to A$ embeds $g_i \otimes g_j$ into $A$, and
lifts to a morphism in $\Z(\C)$, namely the product of
$(A,\sigma)$; consequently it embeds $(g_i,\sigma_i)  \otimes
(g_j,\sigma_j)$ into $(A,\sigma)$. Thus $\bra{(A,\sigma)} \to
\bra{A}$ is full, which proves that $(\E)$ is central.
\end{proof}

\subsection{Example: Tambara-Yamagami categories}\label{ty}

 A Tambara-Yamagami category is a fusion category having exactly one non-invertible simple object $X$, with the additional condition that $X$ is not a factor of $X \otimes X$. These categories, which are in a sense the simplest non-pointed categories, have been classified in \cite{TY}.

Let $\TY$ be a Tambara-Yamagami category. Denote by $\Gamma$ the Picard group of $\TY$.  It is a finite abelian group.
Denote by $X$ a non-invertible simple object.
The maximal pointed fusion subcategory of $\C$, denoted by $\C_{\pt}$, is tensor equivalent to the category $\Gamma\ti\vect$ of
finite dimensional $\Gamma$\ti graded vector spaces.
The following proposition characterizes normal tensor
functors on $\C$.


\begin{proposition} Let $\TY$ be a Tambara-Yamagami category, with Picard group  $\Gamma$, and let $F: \C \to \D$ be a dominant tensor functor, with induced central algebra
$(A, \sigma)$. Then $F$ is normal if and only if $F$ is a fiber functor, or $A$ belongs to $\C_{\pt}$.

In the latter case, we have an exact sequence of tensor categories:
$$G\ti\vect \toto \TY \overset{ F}\toto \D,$$
where $G$ is a subgroup of $\Gamma$.
\end{proposition}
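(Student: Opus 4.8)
The plan is to dispatch the two implications separately, leaning on Theorem~\ref{sum-inv} for sufficiency and on the Tambara--Yamagami fusion rules for necessity. Throughout I will use that the simple objects of $\TY$ are the invertibles $g \in \Gamma$ together with the single non-invertible simple $X$, subject to $g \otimes X \simeq X \simeq X \otimes g$ and, crucially, $X \otimes X \simeq \bigoplus_{g \in \Gamma} g$, and that $\C_{\pt} = \bra{\Gamma}$ consists exactly of the direct sums of invertible objects.

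For sufficiency, suppose first that $F$ is a fiber functor, so $\D \simeq \vect_\kk$; then every object of $\D$ is trivial, whence $\KER_F = \C$ and $F$ is normal. Suppose instead that $A \in \C_{\pt}$; then $A$ is a direct sum of invertible objects of $\C$, so Theorem~\ref{sum-inv}(i) applies verbatim. It yields that $F$ is normal, that the isomorphism classes of simple summands of $A$ form a subgroup $G$ of $\Pic(\TY) = \Gamma$, and that the asserted exact sequence $G\ti\vect \toto \TY \overset{F}\toto \D$ holds. This establishes both the sufficiency and the final \emph{latter case} statement of the proposition at once.

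For necessity, assume $F$ is normal. By Lemma~\ref{normal-algebra} we have $A \in \KER_F$, i.e.\ $F(A) \simeq \un^n$ for some $n$. Decompose $A$ into simples as $A \simeq \left(\bigoplus_{g \in S} g\right) \oplus X^{\oplus m}$ with $S \subseteq \Gamma$ and $m \ge 0$. If $m = 0$ then $A \in \C_{\pt}$ and we are done. The crux is the case $m \ge 1$: then $X$ is a direct summand of $A$, hence $F(X)$ is a direct summand of the trivial object $F(A)$, and since any direct summand of a trivial object in a fusion category is again trivial, $F(X)$ is trivial. Applying $F$ to the fusion rule $X \otimes X \simeq \bigoplus_{g \in \Gamma} g$ gives $F(X) \otimes F(X) \simeq \bigoplus_{g \in \Gamma} F(g)$; the left-hand side is trivial while each $F(g)$ is invertible, so comparing simple constituents forces $F(g) \simeq \un$ for every $g \in \Gamma$. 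Thus $F$ trivializes every simple object of $\C$, and since $F$ is exact and dominant this forces $\D \simeq \vect_\kk$, i.e.\ $F$ is a fiber functor.

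I expect the only genuine step to be this necessity argument; its heart is the propagation of triviality through $X \otimes X \simeq \bigoplus_{g} g$ once $X$ is known to lie in $\KER_F$. The routine facts to check are that a direct summand of $\un^n$ is trivial by semisimplicity, and that dominance upgrades ``all $F(\text{simples})$ trivial'' to ``$\D$ trivial''; neither presents a real obstacle.
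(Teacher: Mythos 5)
Your proof is correct. The sufficiency direction and the final exact-sequence statement coincide exactly with the paper's: once $A$ is a direct sum of invertibles, Theorem \ref{sum-inv}(i) gives normality, the subgroup $G \subset \Gamma$, and the sequence $G\ti\vect \toto \TY \toto \D$, while the fiber-functor case is immediate. For necessity the paper argues at the level of subcategories: it invokes the fact that the only proper fusion subcategories of a Tambara-Yamagami category are those contained in $\C_{\pt}$ (a fusion subcategory containing $X$ contains $X \otimes X \simeq \bigoplus_{g \in \Gamma} g$, hence everything), applies this to $\KER_F$, and uses that normality puts $A$ in $\KER_F$. You instead argue at the level of the algebra $A$ itself: Lemma \ref{normal-algebra} gives $A \in \KER_F$, and if $X$ occurs in $A$ you propagate triviality through the fusion rule $X \otimes X \simeq \bigoplus_{g \in \Gamma} g$ and then use dominance and semisimplicity to force $\D \simeq \vect_\kk$. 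Both arguments rest on the same fusion-rule mechanism, so this is a difference of packaging rather than of substance; what yours buys is self-containedness, since you re-derive the one instance of the subcategory classification that is needed and spell out the step (left implicit in the paper) that $\KER_F = \C$ together with dominance makes $F$ a fiber functor.
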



\begin{proof} The only proper fusion subcategories of $\C$ are those contained in
$\C_{\pt}$. This shows the `only if' direction.
Conversely, any fiber functor $F$ on $\C$ is normal, with
$\KER_F = \C$. Suppose on the other hand that $F$ is not a fiber functor.  Then $F$
is normal by Theorem \ref{sum-inv}. This finishes the proof of the
proposition.
\end{proof}

It is known that if a Tambara-Yamagami category $\TY$ admits a fiber functor, so that $\TY
\simeq \rep H$ for some semisimple Hopf algebra $H$,  then $H$
fits into an abelian exact sequence of Hopf algebras
$\kk^{\mathbb Z_2} \to H \to \kk\Gamma$ \cite{gp-ttic}. Hence in
this case $\TY$ fits into an exact sequence of fusion categories
$\rep \Gamma \to \TY \to \rep \mathbb Z_2$. In particular, $\TY$ is
not simple.

\subsection{Extensions of weakly integral fusion
categories}\label{extensions-wi}

Recall that a fusion category is weakly integral if its Frobenius-Perron dimension is a natural integer.
In this section we discuss dominant tensor functors on weakly
integral fusion categories.

\begin{lemma}\label{fpdim-A} Let $F: \C \to \D$ be a dominant tensor functor between
fusion categories, with induced central algebra $(A, \sigma)$.
Then we have:

\begin{enumerate} \item[(i)] $\FPind(\C: \D) = \FPdim A$.

\item[(ii)] If $X \in  \Irr(\C)$, then $m_X(A) \leq \FPdim
X$.

\item[(iii)] $F$ is normal if and only if for all $X \in  \Irr(\C)$ we have $m_X(A) = 0$ or $m_X(A) = \FPdim
X$. \end{enumerate}\end{lemma}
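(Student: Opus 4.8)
The plan is to prove the three assertions of Lemma \ref{fpdim-A} using the Frobenius-Perron dimension functional $\FPdim$, which is a ring homomorphism on the Grothendieck ring, and the adjunction relating $F$ to its right adjoint $R$ (with $A = R(\un)$). Recall that since $\D$ is a fusion category the functor $F$ is automatically perfect, so $R$ is exact, $\C_A \simeq \D$ as tensor categories, and $A$ is semisimple.

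For assertion (i), I would use the identification $\C_A \simeq \D$ of Section \ref{central-alg} together with the multiplicativity of Frobenius-Perron dimension under the free-module functor $F_A: \C \to \C_A$. The key relation is $\FPdim_{\C_A} F_A(X) = \FPdim_\C X$, so that $\FPdim \D = \FPdim \C_A = \FPdim(\C)/\FPdim(A)$; rearranging gives $\FPind(\C:\D) = \FPdim \C / \FPdim \D = \FPdim A$. Concretely, $\FPdim \C_A$ equals $\FPdim\C$ divided by $\FPdim A$ because $\C_A$ is the module category whose regular object has dimension $\FPdim(A)$ times smaller; alternatively one computes $\sum_{Y \in \Irr(\C_A)} (\FPdim Y)^2$ and relates it to $A$. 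I would simply invoke the standard formula $\FPdim \C_A = \FPdim\C/\FPdim A$ for the category of $A$-modules over a connected \'etale (here, the relevant semisimple) algebra, adapted to the present setting.

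For assertion (ii), the bound $m_X(A) \leq \FPdim X$ follows from adjunction and the fact that $\un$ is the simplest object of $\D$. By adjunction, $m_X(A) = \dim\Hom_\C(X, R(\un)) = \dim\Hom_\D(F(X), \un)$, which counts the multiplicity of $\un$ as a subobject (equivalently quotient) of $F(X)$. Since $\un$ has Frobenius-Perron dimension $1$ and $\FPdim F(X) = \FPdim X$ (tensor functors preserve $\FPdim$), the total multiplicity of $\un$ in $F(X)$ cannot exceed $\FPdim F(X) = \FPdim X$, giving the inequality. I expect this to be the most transparent of the three parts.

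For assertion (iii), I would combine Lemma \ref{normal-algebra} with parts (i) and (ii). By Lemma \ref{normal-algebra}, $F$ is normal if and only if $A \in \KER_F$, i.e.\ $F(A)$ is trivial. Now $F(A) = \bigoplus_{X \in \Irr(\C)} F(X)^{m_X(A)}$, and comparing with the normality criterion, $F(A)$ is trivial exactly when each summand $F(X)$ with $m_X(A) \neq 0$ contributes only copies of $\un$, which by the dimension count of (ii) forces $m_X(A)$ to saturate the bound, namely $m_X(A) = \FPdim X$. The main obstacle, and the step requiring genuine care, is showing the converse direction cleanly: that if $m_X(A) = \FPdim X$ for every $X$ with $m_X(A) \neq 0$, then $F(A)$ is actually trivial rather than merely containing the right number of copies of $\un$. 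Here I would argue that when $m_X(A) = \FPdim X = \dim\Hom_\D(F(X),\un)$, the object $F(X)$ must be isomorphic to $\un^{\FPdim X}$ (since the multiplicity of $\un$ in $F(X)$ already accounts for all of its Frobenius-Perron dimension, leaving no room for other simple constituents), whence $X \in \KER_F$ and $F(A)$ is trivial, so $F$ is normal by Lemma \ref{normal-algebra}.
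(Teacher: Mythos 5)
Your proof is correct, and its mathematical core coincides with the paper's: both rest on the adjunction identity $m_X(A)=\dim\Hom_\D(F(X),\un)=m_\un(F(X))$ and on the Frobenius--Perron saturation argument (if $m_\un(F(X))=\FPdim F(X)$ then $F(X)$ is trivial, since any other simple constituent would contribute at least $1$ more to the dimension), which is exactly how the paper proves (ii) and the ``if'' direction of (iii). The differences are in which auxiliary results carry the load. For (i), the paper simply quotes \cite[Proposition 4.3]{tensor-exact}, which states $\FPind(\C:\D)=\FPdim R(\un)$, whereas you pass through the equivalence $\D\simeq\C_A$ and the formula $\FPdim\C_A=\FPdim\C/\FPdim A$; be aware that inside this paper that formula is \emph{deduced from} Lemma~\ref{fpdim-A}(i) (see the proof of Lemma~\ref{dys-nondeg}), and the braided-case \'etale-algebra formula from the literature applies to $\Z(\C)_{(A,\sigma)}$ rather than to $\C_A$, so to avoid circularity you should either import the central (non-braided) version from the companion paper --- which is essentially \cite[Proposition 4.3]{tensor-exact} again --- or run the Kirillov--Ostrik-style count you sketch; the paper's direct citation is the shorter and safer route. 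For (iii), you run both directions through Lemma~\ref{normal-algebra} ($F$ normal $\iff A\in\KER_F$): normality makes $F(A)$ trivial, hence every simple constituent of $A$ is trivialized and the bound of (ii) saturates, and conversely saturation places every constituent of $A$ in $\KER_F$, hence $A\in\KER_F$. The paper instead cites \cite[Proposition 6.9]{tensor-exact} for ``only if'' and gets ``if'' by applying the definition of normality to all simples of $\C$ (each $X$ has $F(X)$ trivial or $m_\un(F(X))=0$). Your variant buys self-containedness, needing only Lemma~\ref{normal-algebra} and the constituents of $A$; the paper's is marginally quicker. Both are sound.
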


\begin{proof} By \cite[Proposition 4.3]{tensor-exact}, we have
$\FPind (\C: \D) = \FPdim R(\un)$. This proves (i) since $A =
R(\un)$.

By adjunction, we have $\Hom_\D(F(X), \un) \simeq \Hom_\C(X, A)$,
so that  $m_X(A) = m_\un(F(X))$. This implies (ii) since
$m_\un(F(X)) \leq \FPdim F(X) = \FPdim X$.

We next show (iii). The only if part follows from
\cite[Proposition 6.9]{tensor-exact}. Conversely, suppose that for
all $X \in  \Irr(\C)$ we have $m_\un(F(X)) = m_X(A) \in \{ 0,
\FPdim X\}$. Let $X \in \Irr(\C)$ and assume $m_\un(F(X)) \neq 0$.
Then  $m_\un(F(X)) = \FPdim X = \FPdim (F(X))$, so  $F(X)$ is
trivial. Thus $F$ is normal, which completes the proof of  (iii)
and of the lemma.
\end{proof}


\begin{proposition}\label{wint} Let $F: \C \to \D$ be a dominant tensor functor between
fusion categories $\C$ and $\D$ and let $(A, \sigma)$ be the  induced central
algebra of $F$. Then the following assertions are
equivalent:
\begin{enumerate}\item[(i)] $\C$ is weakly integral.
\item[(ii)] $\D$ is weakly integral and $\FPdim A \in \mathbb Z$.
\end{enumerate}
\end{proposition}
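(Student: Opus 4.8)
The whole argument rests on the numerical identity furnished by Lemma~\ref{fpdim-A}(i): since $\FPind(\C:\D)=\FPdim\C/\FPdim\D$ by definition and $\FPind(\C:\D)=\FPdim A$, we have the factorization $\FPdim\C=\FPdim A\cdot\FPdim\D$, in which all three factors are positive real algebraic integers (Frobenius--Perron dimensions of objects and of fusion categories are algebraic integers, by \cite{ENO}). The implication (ii)$\implies$(i) is then immediate: if $\FPdim\D\in\mathbb Z$ and $\FPdim A\in\mathbb Z$, their product $\FPdim\C$ lies in $\mathbb Z$, so $\C$ is weakly integral.

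For (i)$\implies$(ii) the substantial point is to show that $\D$ is weakly integral; once this is known, $\FPdim A=\FPdim\C/\FPdim\D$ is a rational number which is an algebraic integer, hence lies in $\mathbb Z$, yielding both halves of (ii). I would prove $\D$ weakly integral by showing $(\FPdim Y)^2\in\mathbb Z$ for every $Y\in\Irr(\D)$, via a Galois argument anchored by the factorization. Fix a field automorphism $\sigma$ of $\overline{\mathbb Q}$. For any simple object $Z$ of a fusion category, $\FPdim Z$ is the Perron--Frobenius eigenvalue of the nonnegative integral fusion matrix of $Z$, so each conjugate $\sigma(\FPdim Z)$ is another eigenvalue of that matrix and therefore $|\sigma(\FPdim Z)|\le\FPdim Z$. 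Writing $A\simeq\bigoplus_X X^{m_X(A)}$ with $m_X(A)\ge 0$ gives $|\sigma(\FPdim A)|\le\sum_X m_X(A)\,|\sigma(\FPdim X)|\le\FPdim A$, and likewise, from $\FPdim\D=\sum_{Y\in\Irr(\D)}(\FPdim Y)^2$, one obtains $|\sigma(\FPdim\D)|\le\FPdim\D$.

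Now apply $\sigma$ to $\FPdim\C=\FPdim A\cdot\FPdim\D$. As $\FPdim\C\in\mathbb Z$ it is fixed by $\sigma$, so $|\sigma(\FPdim A)|\,|\sigma(\FPdim\D)|=\FPdim\C=\FPdim A\cdot\FPdim\D$; since each factor on the left is dominated by the corresponding factor on the right, both inequalities are equalities, and in particular $|\sigma(\FPdim\D)|=\FPdim\D$. Expanding $\sigma(\FPdim\D)=\sum_Y\sigma(\FPdim Y)^2$ and using $|\sigma(\FPdim Y)^2|\le(\FPdim Y)^2$, saturation of the triangle inequality forces all the numbers $\sigma(\FPdim Y)^2$ to be nonnegative real multiples of one common unit-modulus factor; the unit object contributes the term $\sigma(\FPdim\un)^2=1>0$, pinning that factor to $1$, so each $\sigma(\FPdim Y)^2$ is a nonnegative real, and the term-by-term equality of moduli then gives $\sigma\bigl((\FPdim Y)^2\bigr)=(\FPdim Y)^2$ for every $Y$. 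Since $\sigma$ was arbitrary, $(\FPdim Y)^2$ is Galois-invariant, hence rational, and being an algebraic integer it lies in $\mathbb Z$. Thus $\D$ is weakly integral, which concludes the argument.

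The \textbf{main obstacle} is precisely the passage from the single scalar equality $|\sigma(\FPdim\D)|=\FPdim\D$ to the per-object conclusion $(\FPdim Y)^2\in\mathbb Z$: one must exploit the saturation of the triangle inequality summand by summand and, crucially, fix the common phase using the unit object, since the Perron--Frobenius bounds alone only control the global quantities $\FPdim A$ and $\FPdim\D$ and would otherwise permit some $\FPdim Y$ to be a non-rational algebraic integer (for instance $\sqrt2$) whose conjugates merely share a common modulus.
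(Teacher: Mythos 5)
Your proof is correct, and it takes a genuinely different route from the paper's. The paper, after the same reduction (both directions hinge on $\FPdim \C = \FPdim A \cdot \FPdim \D$ from Lemma~\ref{fpdim-A}(i), and it suffices to show $\D$ is weakly integral), proceeds categorically: it identifies $\D$ with $\C_A$, invokes \cite[Corollary 8.14]{ENO} to get that the bimodule category ${}_A\C_A$ is a fusion category with $\FPdim {}_A\C_A = \FPdim \C$, hence weakly integral, and then uses the full embedding $\C_A \subset {}_A\C_A$ together with \cite[Proposition 8.27]{ENO} (simple objects of a weakly integral category have dimensions $\sqrt{n}$, so full fusion subcategories inherit weak integrality). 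You replace all of this machinery with a direct Galois/Perron--Frobenius argument: the bound $|\sigma(\FPdim Z)| \le \FPdim Z$ for conjugates of PF eigenvalues, the multiplicativity $\FPdim\C = \FPdim A\cdot \FPdim\D$, and a two-stage saturation of inequalities, first to force $|\sigma(\FPdim\D)| = \FPdim\D$ and then, summand by summand with the phase pinned by the unit object, to force $\sigma\bigl((\FPdim Y)^2\bigr) = (\FPdim Y)^2$. In effect you prove a relative version of \cite[Proposition 8.27]{ENO} from scratch rather than quoting it. What the paper's route buys is brevity (given the ENO results) and the structural insight that $\D$ sits as a full subcategory of a category of the same FP dimension as $\C$; what your route buys is self-containedness -- no dual or bimodule category theory, only elementary facts about nonnegative integer matrices and field automorphisms of $\overline{\mathbb Q}$ -- plus the stronger per-object conclusion $(\FPdim Y)^2 \in \mathbb Z$ for every simple $Y$ of $\D$ as an explicit byproduct.
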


\begin{proof} (ii) $\Rightarrow$ (i) results immediately from Lemma \ref{fpdim-A} (i).

(i) $\Rightarrow$ (ii).
Notice first that since $\FPdim A \FPdim \D = \FPdim \C$ and
$\FPdim A$ is an algebraic integer, it is enough to verify that $\FPdim \D$ is a natural integer, that is, $\D$ is weakly integral.

Recall that $\D$ is tensor equivalent to the fusion category
$\C_A$ of right $A$-modules in $\C$. Since $A$ is an
indecomposable algebra in $\C$, the category ${}_A\C_A$ of
$A$-bimodules in $\C$ is a fusion category and it satisfies
$\FPdim {}_A\C_A = \FPdim \C$ \cite[Corollary 8.14]{ENO}.
Therefore ${}_A\C_A$ is weakly integral. We have a full tensor
embedding  $\C_A \subset  {}_A\C_A$.

Now, in a weakly integral fusion category the
Frobenius-Perron dimensions of simple objects are square roots of
natural integers \cite[Proposition 8.27]{ENO}, and as a result, a full
fusion subcategory of a weakly integral fusion category is weakly integral.
So $\D \simeq \C_A$ is weakly integral, and we are done.
\end{proof}

In the case where the functor $F$ is normal, we have $\FPdim A =
\FPdim \KER_F$, and since $\KER_F$ admits a fibre functor it is weakly integral. Thus we have:

\begin{corollary}\label{wi-closed} Let $\C' \to \C \to \C''$ be an exact sequence
of fusion categories. Then $\C$ is weakly integral if and only if
$\C''$ is weakly integral. In particular, the class of weakly integral fusion
categories is closed under extensions. \qed \end{corollary}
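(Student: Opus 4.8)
The plan is to read the corollary off Proposition \ref{wint}. In the exact sequence $\C' \overset{i}\toto \C \overset{F}\toto \C''$ the functor $F$ is by definition dominant and normal, so Proposition \ref{wint} applies to $F:\C \to \C''$ with its induced central algebra $(A,\sigma)$: it asserts that $\C$ is weakly integral if and only if $\C''$ is weakly integral \emph{and} $\FPdim A$ is a natural integer. The whole corollary therefore reduces to showing that the second condition, $\FPdim A \in \Zed$, is automatic once one knows that $F$ is normal.

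To establish this, first I would invoke Lemma \ref{normal-algebra}: normality of $F$ gives $A \in \KER_F$. Now $\KER_F$ is tensor equivalent to $\C'$ and carries the fibre functor $\omega_F$, so by Tannaka reconstruction $\KER_F \simeq \CoRep H$ for a finite dimensional Hopf algebra $H$. A fusion category admitting a fibre functor is integral, the Frobenius-Perron dimension of a simple object coinciding with the dimension of its image under the fibre functor; hence every object of $\KER_F$ has integer Frobenius-Perron dimension, and in particular $\FPdim A \in \Zed$. Equivalently, one may argue as in the paragraph preceding the corollary: for normal $F$ one has $\FPdim A = \FPdim \KER_F$, and $\KER_F$ being weakly integral makes the latter a natural integer.

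Combining the two steps, the two conditions of Proposition \ref{wint} collapse to the single requirement that $\C''$ be weakly integral, which is precisely the claimed equivalence. For the final assertion I would note that $\C' \simeq \KER_F$ always admits a fibre functor and is hence weakly integral; so for an extension $\C' \to \C \to \C''$ the hypothesis that both outer terms be weakly integral amounts to $\C''$ being weakly integral, and the equivalence just proved then forces $\C$ to be weakly integral, i.e. the class is closed under extensions. I do not foresee any genuine obstacle here; the only point deserving a word of care is $\FPdim A \in \Zed$, which rests exactly on the observation that $\KER_F$, admitting a fibre functor, is integral, so that the Frobenius-Perron dimension of the object $A$ it contains is an ordinary integer rather than merely an algebraic integer.
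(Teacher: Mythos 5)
Your proposal is correct and follows essentially the same route as the paper: reduce to Proposition \ref{wint} and observe that normality of $F$ forces $\FPdim A \in \Zed$, since $\KER_F$ admits the fibre functor $\omega_F$ and is therefore (weakly) integral. Your object-level justification ($A \in \KER_F$ integral, hence $\FPdim A \in \Zed$) and the paper's category-level one ($\FPdim A = \FPdim \KER_F$) are interchangeable, and you note both.
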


\section{Equivariantization revisited}\label{sect-equi-general}

The aim of this section is to state and discuss equivariantization
criteria. In order arrive at a synthetic statement, we have to
extend the notion of equivariantization to actions of finite group
schemes. Thanks to this generalization, we can state that an exact
sequence of finite tensor categories is central if and only if its
Hopf monad is normal cocommutative, and that it is an
equivariantization exact sequence if and only if it is perfect and
central, which extends a result of \cite{tensor-exact} concerning
discrete groups, and also reformulates and extends a result of
\cite{ENO2} concerning fusion categories.

\subsection{Cocommutative normal Hopf monads}
Let $\C$ be a tensor category. A \kt linear right exact normal Hopf monad $T$ on $\C$ is \emph{cocommutative} (see \cite{tensor-exact}) if
for any morphism $x: T(\un) \to \un$ and any object $X$ of $\C$
$$(x \otimes TX)T_2(\un,X) = (TX \otimes x)  T_2(X, \un).$$

Note that, if $V$ is a trivial object, and $X$ is an arbitraty object of $\C$, there is a canonical isomorphism $\tau_{V,X}: V \otimes X \iso X \otimes V$, which is characterized by the fact that for all $x: V \to \un$, we have $(X \otimes x) \tau_{V,X} = x \otimes X$. The natural isomorphism $\tau_{V,-}$ is a half-braiding,  called the \emph{trivial half-braiding of $V$.}

We have the following characterizations of normal cocommutative Hopf monads.

\begin{lemma}\label{def-cocom}
Let $\C$ be a tensor category and let $T$ be a normal Hopf monad
on $\C$, with induced central coalgebra $(\hat{C},\hat{\sigma})$.
The following assertions are equivalent:
\begin{enumerate}[(i)]
\item $T$ is cocommutative;
\item $T_2(X,\un) = \tau_{T\un,TX}T_2(\un,X)$ for $X$ in $\C$;
\item $\hat{\sigma}_{(M,r)} = \tau_{T\un,M}$ for $(M,r)$ in $\C^T$, or in short: $\hat{\sigma}$ `is the trivial half-braiding'.
\end{enumerate}
\end{lemma}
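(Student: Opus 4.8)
The plan is to prove the cycle of implications (i) $\Rightarrow$ (ii) $\Rightarrow$ (iii) $\Rightarrow$ (i), translating the defining cocommutativity identity into the language of the half-braiding $\hat\sigma$ on the induced central coalgebra. The central object throughout is the coalgebra $\hat C = L(\un) = T(\un)$, which carries the canonical half-braiding $\hat\sigma$ making it cocommutative in $\Z(\C)$; since $T$ is normal, $T(\un)$ is a \emph{trivial} object, so the trivial half-braiding $\tau_{T\un,-}$ is well defined and I will argue that cocommutativity forces $\hat\sigma$ to coincide with it.

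First I would set up (i) $\Leftrightarrow$ (ii). The cocommutativity condition reads $(x\otimes TX)\,T_2(\un,X)=(TX\otimes x)\,T_2(X,\un)$ for all $x:T\un\to\un$ and all $X$. The trivial half-braiding $\tau_{T\un,TX}$ is characterized by $(TX\otimes x)\,\tau_{T\un,TX}=x\otimes TX$ for every $x:T\un\to\un$. Substituting $\tau_{T\un,TX}T_2(\un,X)$ into the right-hand side of this characterizing equation gives exactly $(x\otimes TX)T_2(\un,X)$, which matches the left-hand side of the cocommutativity identity. Thus asserting $T_2(X,\un)=\tau_{T\un,TX}T_2(\un,X)$ is equivalent, after composing with $(TX\otimes x)$ and quantifying over all $x$, to the cocommutativity identity — here I use that $T\un$ is trivial so that morphisms $x:T\un\to\un$ separate points in the relevant $\Hom$-space, i.e. $(TX\otimes -)$ applied to the two sides agree for all $x$ precisely when the two morphisms into $TX\otimes T\un$ agree. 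This is the step where I must be slightly careful: the equivalence of an identity ``for all $x$'' with a genuine morphism equality relies on $T\un$ being trivial, so that $\id_{T\un}$ is detected by the family of all $x:T\un\to\un$; this is exactly the defining property of the trivial half-braiding and is where the normality hypothesis is essential.

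Next I would treat (ii) $\Leftrightarrow$ (iii). The half-braiding $\hat\sigma$ on $\hat C=T\un$ is built from the comonoidal structure $L_2$ of the left adjoint, and for a $T$-module $(M,r)$ the component $\hat\sigma_{(M,r)}$ is obtained by transporting $T_2$ through the adjunction/free-module structure; concretely $\hat\sigma_{(M,r)}$ is computed from $T_2(\un,M)$ and $T_2(M,\un)$ together with the action $r$. The content of (iii) is that this half-braiding equals $\tau_{T\un,M}$. I would unwind the formula for $\hat\sigma$ in terms of $T_2$ — citing the construction in \cite{BLV} and \cite{tensor-exact} rather than rederiving it — and observe that the identity $T_2(X,\un)=\tau_{T\un,TX}T_2(\un,X)$ of (ii), specialized to $X=M$ and postcomposed with the module structure, is precisely the statement $\hat\sigma_{(M,r)}=\tau_{T\un,M}$. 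Conversely, since free modules $F_T(X)=(TX,\mu_X)$ generate $\C^T$ and the free-module case recovers (ii) on the nose, (iii) implies (ii).

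The main obstacle I anticipate is the bookkeeping in (ii) $\Leftrightarrow$ (iii): writing down the precise formula for $\hat\sigma_{(M,r)}$ in terms of the comonoidal structure of $L$ and checking that it reduces, on free modules, to the comparison of $T_2(\un,M)$ and $T_2(M,\un)$. This is a diagram-chase through the Hopf adjunction $(L,R)$ and the definition of the induced central coalgebra, and the risk is sign/order errors in how $L_2$ dualizes to $T_2$. I would organize this by first recording the explicit half-braiding formula as a lemma-free computation valid for free modules, then extending to general modules by the universal property, since every module is a coequalizer of free ones and all functors involved are right exact. Once the free-module identity is in hand, the equivalence with the trivial half-braiding is immediate from the characterizing property of $\tau$, closing the loop with (i).
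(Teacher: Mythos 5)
Your proposal is correct and follows essentially the same route as the paper: (i) $\iff$ (ii) is the characterizing property of the trivial half-braiding (with your explicit remark that triviality of $T\un$, i.e.\ normality, makes the morphisms $x:T\un\to\un$ jointly separating), and (ii) $\iff$ (iii) is exactly the paper's argument via the fusion isomorphisms $\Phi^r_{(M,r)}=(T\un\otimes r)T_2(\un,M)$, $\Phi^l_{(M,r)}=(r\otimes T\un)T_2(M,\un)$, the formula $\hat{\sigma}=\Phi^l\Phi^{r\,-1}$, naturality of $\tau$, and specialization to free modules $(TX,\mu_X)$ for the converse. The only detail worth tightening is that the free-module case does not give (ii) quite ``on the nose'': one must compose with $T(\eta_X)$ and use the unit law $\mu_X T(\eta_X)=\id$, as the paper does; your suggested coequalizer extension to general modules is unnecessary, since the direct argument already handles arbitrary $(M,r)$.
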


\begin{proof}
Assertion (ii) is just a reformulation of the definition of cocommutativity in terms of trivial half-braidings, so (i) $\iff$ (ii).
Let $(M,r)$ be a $T$\ti module. Since $T$ is a Hopf monad, we have fusion isomorphisms
\begin{eqnarray*}
\Phi^r_{(M,r)} &=(T\un \otimes r)T_2(\un,M):TM \iso T\un \otimes M, \\
\Phi^l_{(M,r)} &=(r \otimes T\un)T_2(M,\un): TM \iso M \otimes T\un,
\end{eqnarray*}
 and by definition $\hat{\sigma}= \Phi^l\Phi^{r -1}$. If (ii) holds, we have $\Phi^l_{(M,r)} = \tau_{T\un,M}\Phi^r_{(M,r)}$ by functoriality of $\tau$,
so $\hat{\sigma}_{(M,r)} = \tau_{T\un,M}$, which shows (ii) $\implies$ (iii). Conversely, applying (iii)   to $(M,r) = (TX,\mu_X)$ and composing on the right by $T(\eta_X)$ gives $T_2(X,\un) = \tau_{T\un,TX}T_2(\un,X)$, so (iii) $\implies$ (ii).
\end{proof}

From \cite[Theorem  5.21  and Theorem
5.24]{tensor-exact}, one deduces immediately
\begin{proposition}\label{equiv-ss} A dominant tensor functor  $F: \C \toto \D$  between finite tensor categories is an equivariantization under the action of a finite group $G$ if and only if the following two conditions are met:
\begin{enumerate}
\item the Hopf monad $T$ of $F$ is normal and  cocommutative;
\item the induced Hopf algebra $H$ of $F$ is split semisimple.
 \end{enumerate}
If these conditions hold, then $F$ is perfect, that is $T$ is exact, and $G =\Spec (H)$.
 \end{proposition}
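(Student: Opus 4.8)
The plan is to read the statement off the two results of \cite{tensor-exact} cited above, the only work being to translate their hypotheses into the monadic language of the present section. Throughout, set $\C' = \KER_F$; once $T$ is normal (equivalently $F$ is normal, i.e. $T(\un)$ is trivial) we obtain an exact sequence $\C' \overset{i}\toto \C \overset{F}\toto \D$ with induced Hopf algebra $H = \coend(\omega_F)$ and $\C' \simeq \CoRep H$. Dominance of $F$ is built into the hypothesis ($T$ faithful), so the exact sequence is available as soon as condition (1) holds.

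For the direction ($\Leftarrow$), assume $T$ is normal and cocommutative and that $H$ is split semisimple. By Lemma \ref{def-cocom}, cocommutativity of $T$ says exactly that the induced central coalgebra carries the trivial half-braiding, i.e. that the exact sequence is central; and by Proposition \ref{tannakian}, split semisimplicity of $H$ forces $G := \Spec H$ to be a discrete finite group with $\C' \simeq \rep G$. In this discrete, central situation the cited results of \cite{tensor-exact} produce an action of $G$ on $\D$ by tensor autoequivalences together with a tensor equivalence $\C \simeq \D^G$ under which $F$ is identified with the forgetful functor, and they also yield exactness of $T$. Thus $F$ is an equivariantization under $G = \Spec H$ and is perfect, giving the ``moreover'' clause in this direction.

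For the direction ($\Rightarrow$), suppose $F$ is an equivariantization under a finite group $G$, so that we may identify the associated exact sequence with $\rep G \to \D^G \to \D$ and $F$ with the forgetful functor. By Example \ref{equiv-nec} this sequence is central, hence its induced central coalgebra has the trivial half-braiding; by Lemma \ref{def-cocom} this is precisely cocommutativity of $T$, while normality is immediate from triviality of $T(\un)$. Finally the induced Hopf algebra is $H = \OO(G) = \kk^G$, so that $\CoRep H \simeq \rep G$, and $\kk^G$ is commutative and split semisimple, which is condition (2). The cited theorems again certify these verifications and identify $G$ with $\Spec H$.

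The step requiring the most care is the reconciliation of the two formalisms: one must check that the notion of cocommutativity recorded in Lemma \ref{def-cocom} is literally the structural condition under which the classification of \cite{tensor-exact} applies, and that split semisimplicity of $H$ is exactly what distinguishes a genuine (discrete) finite group from an infinitesimal group scheme, i.e. what makes $\Spec H$ \'etale. Once these two dictionary entries are in place, the exactness of $T$ (perfectness of $F$) is delivered by the discreteness of $G$ rather than assumed, and the stated equivalence together with the identification $G = \Spec H$ follows formally from the cited theorems.
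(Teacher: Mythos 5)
Your overall strategy coincides with the paper's: the paper proves Proposition~\ref{equiv-ss} by nothing more than the citation of \cite[Theorems 5.21 and 5.24]{tensor-exact}, whose hypotheses are literally conditions (1) and (2), so no translation into the language of central exact sequences is required. The problem is that your intermediate scaffolding contains a genuine error. In both directions you assert that, ``by Lemma~\ref{def-cocom}'', cocommutativity of $T$ is the same thing as centrality of the exact sequence. Lemma~\ref{def-cocom} says strictly less: cocommutativity is equivalent to the half-braiding $\hat{\sigma}$ of the induced central \emph{coalgebra} being the trivial one, i.e.\ a statement about the underlying morphisms $\hat{\sigma}_{(M,r)}$ for this single object. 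Centrality is the statement that the whole kernel lifts to the center --- equivalently, that the forgetful functor $\bra{\A} \to \bra{A}$ is an equivalence of tensor categories --- and the equivalence ``cocommutative $\iff$ central'' is precisely Theorem~\ref{cocom-central}, one of the main results of the paper, proved only in Section~\ref{proof-main} via the double of a Hopf monad and Lemma~\ref{central-induced}. Consequently, in your ($\Leftarrow$) direction the appeal to Proposition~\ref{tannakian} (whose hypothesis is a \emph{central} exact sequence, and which is your only source for commutativity of $H$ and discreteness of $\Spec H$) is unsupported by the lemma you cite; and in your ($\Rightarrow$) direction the passage from Example~\ref{equiv-nec} (centrality) to cocommutativity has the same unproved step.

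The gap can be repaired in two ways. You could invoke Theorem~\ref{cocom-central} instead of Lemma~\ref{def-cocom}; this is not circular, since the proof of Theorem~\ref{cocom-central} in Section~\ref{proof-main} makes no use of Proposition~\ref{equiv-ss}, but it routes an ``immediate'' deduction through one of the paper's hardest results. Or, better, do what the paper does and drop centrality altogether: conditions (1) and (2) are exactly the hypotheses of \cite[Theorem 5.24]{tensor-exact}, and the converse direction, including cocommutativity of the Hopf monad of an equivariantization and the identification $H \cong \kk^G$, is \cite[Theorem 5.21]{tensor-exact}; those theorems also deliver commutativity of $H$, exactness of $T$, and $G = \Spec H$ directly, with no appeal to Proposition~\ref{tannakian} or to the notion of central exact sequence.
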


This suggests that a (perfect) dominant tensor functor between
finite tensor categories is an equivariantization under the action
of a finite group scheme if and only if its Hopf monad is normal
cocommutative; the group scheme being the spectrum of the induced
Hopf algebra of $T$ - which, in this case, is a  finite
dimensional  commutative Hopf algebra.

We will now define group scheme actions in order to give a mathematical meaning to this claim, and then  prove it.

\subsection{Group scheme actions} \label{sect-group-scheme}
Let $\Aa$ be a monoidal category and let $\M$ be a category. An
action of $\Aa$ on $\M$ is a strong monoidal functor $\rho :\Aa
\to \EndFun(\M)$, where $\EndFun(\M)$ denotes the strict monoidal
category of endofunctors of $\M$. Given such an action $\rho$, we
say that $\M$ is an $\Aa$\ti module category, and we usually write
$\rho(a,m) = a \odot m$. Let $\M$, $\M'$ be two $\Aa$\ti module
categories. A functor of $\Aa$\ti module categories $\M \to \M'$
is a pair $(F,F_2)$, where $F :\M \to \M'$ is a functor
and $F_2$ is a natural isomorphism $F_2(a,m) : a \odot F(m) \iso
F(a \odot m)$, $a \in \Aa, m \in \M$, such that the following
diagrams commute:

$$
\xymatrix@!0 @R=1.1cm @C=1.7cm{
F((a\otimes b)\odot m) \ar[rr]^{\simeq}\ar[dd]_{F_2(a\otimes b,m)} && F(a \odot (b \odot m)) \ar[d]^{F_2(a,b\odot m)}\\
&& a\odot F(b\odot m)\ar[d]^{a \odot F_2(b,m)}\\
(a \otimes b) \odot F(m) \ar[rr]_{\simeq} && a\odot(b\odot F(m))
}\quad
\xymatrix@!0 @R=1.1cm @C=2cm{
F(\un \odot m) \ar[dr]^{\simeq} \ar[dd]_{F_2(\un,m)}& \\
& F(m)\\
\un \odot F(m) \ar[ur]_{\simeq}&}
$$
where the unlabeled isomorphisms come from the monoidal structure
of the  action.

Now assume $\Aa$ is endowed with a strong monoidal functor  $\omega : \Aa \to \vect_\kk$, $\M$ is a \kt category, and $\rho$ is an action of $\Aa$ on $\M$ by \kt linear endofunctors. The \emph{equivariantization of $\M$ under the action $\rho$} is the category $\M^\rho$, also denoted by $\M^\Aa$,  defined as follows. Objects of $\M^\rho$ are data $(m,\alpha)$ where $m$ is an object of $\M$ and $\alpha=(\alpha^\lambda_c)_{c \in \Ob(\C), \lambda \in \omega(c)^*}$ is a family of morphisms
$\alpha^\lambda_c : c \odot m \to m$ satisfying the following conditions:
\begin{enumerate}
\item \emph{functoriality :} $\alpha^\lambda_c$ is linear in $\lambda$, and if $f : c \to c'$ is a morphism in $\C$ and $\lambda \in \omega(c')^*$, then
$\alpha^\lambda_{c'} (f \odot m) = \alpha^{f^* \lambda}_c$, where $f^*\lambda = \lambda \omega(f)$;
\item \emph{$\rho$\ti  compatibility:} we have  commutative  diagrams
$$\xymatrix{
(a \otimes b)\odot m \ar[d]_\simeq \ar[r]^{\alpha^{\lambda \otimes \mu}_{a \otimes b}}& m&&\un \otimes m \ar[r]^{\alpha^{\omega_0}_\un} \ar[dr]_\simeq& m\\
a\odot(b \odot m)\ar[r]_{a\odot \alpha^\mu_b} & a \odot m \ar[u]_{\alpha^\lambda_a}&&& m \ar[u]_=
}$$with $a$, $b$ objects of $\Aa$ and $\lambda \in \omega(a)^*$, $\mu \in \omega(b)^*$.
\end{enumerate}
Morphisms in $\M^\Aa$ from $(m,\alpha)$ to $(n,\beta)$ are morphisms $f : m \to n$ in $\M$ satisfying $f\alpha = \beta(a \odot f)$.

Note that if $G$ is a discrete group, viewed as a monoidal category $\underline{G}$ whose objects are the elements of the group, and equipped with
the trivial strict monoidal functor $\omega : \underline{G} \to \kk$, $g \mapsto \kk$, then  a $\underline{G}$\ti action is the same thing as a  $G$\ti action in the usual sense, and in the case of a \kt linear action on a \kt category $\M$, $\M^{\underline{G}}$ is isomorphic to the usual equivariantization $\M^G$.

\begin{proposition}\label{action-normal}
Let $T$ be a \kt linear faithful exact normal Hopf monad on a
tensor category $\C$,  with induced Hopf algebra $H$. Let $
\Kh  = H^*$ and $\LL = \CoRep  \Kh$. Then there is a
natural action of $\LL$ on $\C$ by \kt linear endofunctors,
defined by
$$V \odot X = V  \square^\Kh  T(X),$$ and $\C^\LL$ is canonically isomorphic to
$\C^T$ as a \kt linear category.
\end{proposition}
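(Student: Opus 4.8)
The plan is to establish two things: first, that $\LL = \CoRep \Kh$ genuinely acts on $\C$ by the formula $V \odot X = V \, \square^\Kh \, T(X)$, and second, that the equivariantization $\C^\LL$ recovers $\C^T$. The key structural fact I would exploit is the relationship, recalled in the paper's Remark after Lemma~\ref{normal-algebra}, between the Hopf monad $T$, its induced Hopf algebra $H$, and the object $T(\un) \simeq \Kh \otimes \un$ with $\Kh = H^*$. Since $T$ is normal, $T(\un)$ is trivial, and $\Kh$ is naturally a coalgebra acting compatibly: the comonoidal structure $T_2$ makes each $T(X)$ into a left $\Kh$-comodule (equivalently, $T(X)$ carries a coaction over the Hopf algebra structure on $\Kh$), which is precisely what makes the cotensor product $V \, \square^\Kh \, T(X)$ well-defined for a right $\Kh$-comodule $V$.

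\emph{First} I would verify that $\odot$ defines a strong monoidal functor $\LL \to \EndFun(\C)$, i.e.\ that $\odot$ is an action in the sense of Section~\ref{sect-group-scheme}. The associativity isomorphism $(V \otimes W) \odot X \iso V \odot (W \odot X)$ should come from the coassociativity of the comultiplication of $\Kh$ together with the monad multiplication $\mu$ of $T$ (the cotensor product over $\Kh$ is associative, and $T$ being a \emph{monad} supplies $\mu_X : T^2 X \to TX$ to glue the iterated constructions), while the unit constraint $\un_\LL \odot X \iso X$ reduces to the counit axiom for the $\Kh$-comodule structure combined with the unit $\eta$ of $T$. Here exactness and faithfulness of $T$ are used to guarantee that the cotensor products behave well as kernels (cotensor is a left-exact equalizer construction), so that $V \odot (-)$ is a genuine \kt linear endofunctor of $\C$ preserving the abelian structure.

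\emph{Second}, and this is the heart of the matter, I would construct the canonical isomorphism $\C^\LL \simeq \C^T$. The point is that a $T$-module structure $r : T(X) \to X$ is the same datum as an equivariant structure in the sense of the equivariantization $\M^\rho$. Concretely, given $(X,r) \in \C^T$, the maps $\alpha^\lambda_V : V \odot X = V \, \square^\Kh \, TX \to X$ should be obtained by first projecting $V \, \square^\Kh \, TX$ into $TX$ (using $\lambda \in \omega(V)^*$ to evaluate the $\Kh$-comodule coordinate) and then applying $r$. I would check that the monad module axioms (compatibility with $\mu$ and $\eta$) translate exactly into the $\rho$-compatibility and functoriality conditions of the definition of $\M^\Aa$. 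Conversely, specializing the family $\alpha$ to $V = \Kh$ (the regular comodule, whose functor $\Kh \odot X = \Kh \, \square^\Kh \, TX \simeq TX$ recovers $T$ itself) should recover the module structure map $r$, giving the inverse construction.

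\emph{The main obstacle} I expect is the bookkeeping in the second step: making the dictionary between the $T$-module axioms and the equivariantization axioms precise requires carefully identifying the fibre functor $\omega : \LL \to \vect_\kk$ (the forgetful functor sending $V$ to its underlying space, so that $\omega(\Kh) = \Kh$ has dimension $\dim H$) and tracking how the half-braiding data implicit in the $\rho$-action matches the comonoidal structure $T_2$ governing how $T$ interacts with $\otimes$. In particular, one must verify that the natural isomorphism $\Kh \, \square^\Kh \, TX \simeq TX$ intertwines the iterated action with the monad multiplication; this is where the Hopf (as opposed to merely bimonad) structure of $T$ is genuinely needed, since the fusion operators guarantee the cotensor construction is compatible with the tensor product on $\C$. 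The verifications are essentially diagrammatic and follow from the defining axioms, but assembling them into the stated canonical \kt linear isomorphism is the technical core.
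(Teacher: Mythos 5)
Your proposal follows essentially the same route as the paper's proof: you define the action by the cotensor product $V \,\square^\Kh\, T(X)$ using the comodule structure that $T_2$ places on $T(X)$, invoke faithfulness and exactness of $T$ to obtain the associativity and unit constraints of the action, and establish $\C^\LL \simeq \C^T$ by observing that the regular comodule satisfies $\Kh \odot X \simeq T(X)$ and that an equivariant structure is entirely determined by its structure map there. The differences are only expository (the paper records the full bicomodule structure on $T(X)$ and phrases the key step as the regular comodule generating $\LL$, while you spell out the forward dictionary $\alpha^\lambda_V = r\circ(\lambda\mbox{-projection})$ explicitly), so this is the same argument.
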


\begin{proof}
For simplicity, we  identify  a finite dimensional vector
space $E$ with the trivial object $E\otimes \un$ in $\C$. Then
$T\un = H$ and the comonoidal structure of $T$ defines a structure
of $H$\ti bicomodule on $T(X)$. This enables us to define $V \odot
X = V  \square^\Kh  T(X)$ for $V$ a  finite-dimensional
right $ \Kh$\ti comodule and $X$ in $\C$. From the fact
that $T$ is a faithful exact Hopf monad, one deduces natural
isomorphisms $(V \otimes W) \odot X \simeq V \odot (W \odot X)$
and $(\kk,\varepsilon) \odot X \simeq X$ which make $\odot$ an
action of $\LL$ on $\C$. Let $A = (H,\Delta)$ be the
trivializing algebra of $\LL$. Then $A$ generates
$\LL$. An object $(m,\alpha)$ of $\C^\LL$ is entirely
determined by $\alpha^\varepsilon_A : A \odot X\simeq T(X) \to X$,
which can be interpreted as a $T$\ti action on $X$ because we have
a canonical isomorphism $A\odot X \simeq T(X)$. This defines a \kt
linear isomorphism $\C^\LL \iso \C^T$.
\end{proof}

Note that the action of Proposition~\ref{action-normal} is not compatible in any clear way with the tensor product of $\C$. In order to take care of the monoidal structure of $\C$,  we now introduce the notion of $\LL$\ti module tensor category.

Denote by $\ab_\kk$ the $2$-category of abelian \kt linear
categories having finite dimensional $\Hom$ spaces and objects of
finite length, $1$-morphisms being \kt linear left exact functors,
and $2$-morphisms being natural transformations. We equip
$\ab_\kk$ with a tensor product \emph{à la} Deligne, denoted by
$\boxtimes$, and characterized by the fact that given three
objects $\M$, $\M'$, $\M''$ in $\ab_\kk$, the category of \kt
linear left exact functors $\M \boxtimes \M' \to \M''$ is
equivalent to the category of functors $\M \times \M' \to \M''$
which are \kt linear left exact in each variable. This tensor
product makes $\ab_\kk$ a monoidal $2$\ti category with unit
object $\vect_\kk$, with a symmetry $\tau_{\M,\M'}: \M
\boxtimes \M' \iso \M' \boxtimes \M$ defined by $m \boxtimes m'
\mapsto m' \boxtimes m$.

Now if $\Aa$ is a tensor category over $\kk$, define an $\Aa$\ti module category to be an object $\M$ of $\ab_\kk$ endowed with
a \kt linear action of $\Aa$ such that the functor $\odot : \Aa \times \C \to \C$ is \kt linear right exact in the first variable (it is automatically exact in the second variable because $\Aa$ is autonomous). Thus we may view $\odot$ as a \kt linear right exact functor $\Aa \boxtimes \C \to \C$.


%

Let $L$ be a finite dimensional cocommutative Hopf algebra. The
tensor category $\LL = \CoRep L$ is tannakian; it is endowed with
a strong monoidal symmetric functor $\Delta_*: \LL \to \LL
\boxtimes \LL$, which is coassociative, and the symmetric fiber
functor  $\eps_*: \LL \to \vect_\kk$ is a counit for
$\Delta_*$. Thus $\LL$ is a bialgebra in the monoidal $2$-category
$\ab_\kk$.

If $(\M,\rho)$ and $(\M',\rho')$ are two $\LL$\ti module categories then one defines a new $\LL$\ti module category
$(\M, \rho) \boxtimes (\M',\rho') = (\M \boxtimes \M', \rho'')$, where
$$\rho'' = (\rho \boxtimes \rho')(\LL \boxtimes \tau_{\LL,\M} \boxtimes \N)
(\Delta_* \boxtimes \M \boxtimes \N)$$
and this tensor product defines a monoidal structure on the $2$\ti category of $\LL$\ti module categories.

 An  \emph{$\LL$\ti module tensor category} is a tensor
category over $\kk$, endowed with
\begin{itemize}
\item a structure of $\LL$\ti module $\rho: \LL \boxtimes \C \to \C$ of $\LL$ on $\C$;
\item natural isomorphisms
$$\alpha_{V,X,Y}: V \odot(X \otimes Y) \iso \otimes(V \odot (X \boxtimes Y))$$
$$\beta: V \boxtimes \un \iso V \otimes \un$$
making the tensor product $\otimes_\C: \C \boxtimes \C \to \C$ and the unit functor $u: \vect_\kk \to \C$, $\kk \mapsto \un$ morphisms of $\LL$\ti module categories.
\end{itemize}
If $\C$ is  an  $\LL$\ti module tensor category, then
$\C^\LL$ is monoidal.

\begin{proposition}
Let $\C$ be a tensor category over $\kk$.
\begin{enumerate}
\item Let $L$ be a finite dimensional cocommutative Hopf algebra, $\LL=\CoRep L$. Then a structure of $\LL$\ti module tensor category on $\C$ defines a \kt linear faithful exact Hopf monad $T = A \odot ?$ on $\C$, where $A=(H,\Delta)$;
\item If $T$ is a  normal cocommutative \kt linear faithful exact Hopf monad on $\C$, the action of $\LL = \CoRep L$ on $\C$ defined in Proposition~\ref{action-normal} makes $\C$ a $\LL$\ti module tensor category.
\end{enumerate}
Moreover, these construction are essentially mutually inverse. Given a Hopf monad as in Assertion (2) and the corresponding structure of $\LL$\ti module tensor category on $\C$, the canonical isomorphism  $\C^\LL \simeq \C^T$ is a tensor isomorphism.
\end{proposition}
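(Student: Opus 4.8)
The plan is to read the two assertions as the two directions of a dictionary between $\LL$\ti module tensor structures on $\C$ and normal cocommutative faithful exact Hopf monads on $\C$, and then to verify that these translations are mutually inverse and match the monoidal structures on the respective fixed-point categories.

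First I would establish assertion (1). Given an $\LL$\ti module tensor structure on $\C$, recall that the trivializing algebra $A=(H,\Delta)$ is an algebra object of $\LL$; since $\odot$ is an action of $\LL$, its action $T=A\odot?$ is automatically a monad on $\C$, with unit and multiplication induced by the unit and product of $A$ together with the associativity and unitality constraints of $\odot$. It is $\kk$\ti linear and exact because $\odot$ is so in the second variable, and faithful because $A$ generates $\LL$. The comonoidal structure $T_2,T_0$ is extracted from the hypothesis that $\otimes_\C:\C\boxtimes\C\to\C$ and $u:\vect_\kk\to\C$ are morphisms of $\LL$\ti module categories: evaluating their structural isomorphisms at the generator $A$ and using the diagonal action defined by $\Delta_*$ together with the comultiplication of $A=(H,\Delta)$ yields $T_2(X,Y):T(X\otimes Y)\to TX\otimes TY$ and $T_0$, the latter also showing $T\un=A\odot\un$ is trivial, so that $T$ is normal. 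The Hopf property, i.e. invertibility of the fusion operators, comes from the antipode of $L$ (equivalently, from rigidity of $\LL$), and cocommutativity of $T$ in the sense of Lemma~\ref{def-cocom}(ii) is forced by the symmetry of $\Delta_*$, which holds because $L$ is cocommutative.

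For assertion (2), Proposition~\ref{action-normal} already furnishes the action $\odot$ of $\LL$ on $\C$ and a $\kk$\ti linear isomorphism $\C^\LL\iso\C^T$. What remains is to promote $\odot$ to an $\LL$\ti module tensor structure, that is, to produce natural isomorphisms $\alpha_{V,X,Y}:V\odot(X\otimes Y)\iso(V_{(1)}\odot X)\otimes(V_{(2)}\odot Y)$ and $\beta$ witnessing that $\otimes_\C$ and $u$ are morphisms of $\LL$\ti module categories, and then to check their coherence diagrams. Writing $V\odot X=V\square^\Kh T(X)$, I would build $\alpha$ from the comonoidal structure $T_2$ of the Hopf monad, threading it through the cotensor product over $\Kh$ and the $H$\ti bicomodule structure on $T(X)$; the essential input is cocommutativity, in the form $T_2(X,\un)=\tau_{T\un,TX}T_2(\un,X)$ of Lemma~\ref{def-cocom}, which is exactly what makes the diagonal $\Delta_*$\ti action on $\C\boxtimes\C$ compatible with $\otimes_\C$. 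I expect this step to be the main obstacle: the coherence pentagon and the unit diagrams must be reduced to the Hopf monad axioms together with the cocommutativity identity, and one must keep careful track of how $\Delta_*$ on $\LL$ corresponds to $T_2$ under the identification $A\odot?\simeq T$.

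Finally I would check that the two constructions are mutually inverse and that $\C^\LL\simeq\C^T$ is a tensor isomorphism. Starting from an $\LL$\ti module tensor structure, forming $T=A\odot?$ and then reapplying Proposition~\ref{action-normal} recovers the original action up to isomorphism, since the action of $\LL$ is determined by the action of its generator $A$ and $A\odot X\simeq T(X)$; conversely, starting from $T$, the monad built from the induced action is isomorphic to $T$ by the same isomorphism, compatibly with the comonoidal and antipode structures. For the last claim, the monoidal structure on $\C^\LL$ arises from $\alpha,\beta$ while that on $\C^T$ arises from $T_2,T_0$; since these correspond under the dictionary, the $\kk$\ti linear isomorphism of Proposition~\ref{action-normal} upgrades to a tensor isomorphism, which completes the proof.
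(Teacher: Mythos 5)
Your proposal is correct and takes essentially the same route as the paper: the monad structure of $T=A\odot ?$ comes from the algebra $A$ and strong monoidality of the action; the comonoidal (Hopf) structure comes from the isomorphisms making $\otimes_\C$ and the unit functor morphisms of $\LL$\ti module categories (equivalently $T(X\otimes Y)\simeq TX\,\square^\Kh\, TY$ and $T\un\simeq \Kh\otimes\un$); and conversely the module tensor structure is rebuilt from $T_2$, normality and cocommutativity exactly as in the paper, which is likewise terse about the coherence verifications and the mutual-inverse claim. The only small divergence is your justification of faithfulness in (1): the paper argues that $\un$ is a subobject of $A$, so the monad unit $X=\un\odot X\to A\odot X=T(X)$ is a monomorphism, which is more direct than invoking that $A$ generates $\LL$.
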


\begin{proof}
Assume we have a structure of $\LL$\ti module tensor category on
$\C$, with action $\rho : \LL \to \EndFun(\C)$, and set $T=
\rho(A)= A \odot ?$. Then $T$ is a monad on $\C$, that is, an
algebra in $\EndFun(\C)$, because $A=(H,\Delta)$ is an algebra in
$\C$ and $\rho$ is strong monoidal. Moreover $T$ is \kt linear
exact, and it is faithful because $\rho$ is right exact and  $\un$
is a subobject of $A$, so $X = \un \odot X$ is a subobject of  $A
\odot X = T(X)$. Moreover the $\LL$\ti module tensor category
structure defines isomorphisms $T(X \otimes Y) \simeq T(X)
\square^\Kh  T(Y)$ and $T(\un) = A \odot \un \simeq  \Kh
 \otimes \un$, which  define a Hopf monad structure on $T$,
which is normal and clearly cocommutative.

Conversely, if $T$ is a normal cocommutative \kt linear faithful
exact  Hopf monad on $\C$, consider the action $\rho$ of $\LL$ on
$\C$ defined by $V \odot X = V  \square^\Kh  T(X)$. Then
$X \mapsto \rho(X)$ is \kt linear left exact, so the action may be
viewed as a \kt linear exact functor $\LL \boxtimes \C  \to \C$.
The structure of Hopf monad, cocommutativity and normality define
isomorphisms $A \odot (X \otimes Y) \simeq (A \odot X)
\square^\Kh  (A \odot Y)$ and $A\odot \un \simeq  L$,
which give rise to structures of morphisms of $\LL$\ti module
morphisms on the tensor product $\C \boxtimes \C \to \C$ and the
unit functor $\vect_\kk \to \C$, making $\rho$ a structure of
$\LL$\ti module tensor category on $\C$.
\end{proof}

%
%

%
%

%
%
%
%

\subsection{Equivariantization and centrality criteria}

Let $G$ be a finite group scheme over $\kk$. A \emph{tensor action
of $G$} on a  tensor category $\C$ is a structure of $\LL$\ti
module tensor category on $\C$, where $\LL = \CoRep \kk[G] =
\OO(G)\mbox{-}\!\textrm{mod}$, where  $\OO(G)$ is the Hopf
algebra of regular functions on $G$. The \emph{equivariantization
of $\C$} under a tensor action of a finite group scheme $G$  is
the tensor category $\C^G = \C^\LL$.

From the results of the previous section, we deduce:

\begin{theorem}\label{equiv-monad-general}
Let $F: \C \toto \D$ be a normal dominant tensor functor between finite tensor categories, and let $T$ be its Hopf monad and $H$ its induced Hopf algebra. The following assertions are equivalent:
\begin{enumerate}[(i)]
\item The tensor functor $F$ is an equivariantization under the tensor action of a finite group scheme on $\D$;
\item The normal Hopf monad $T$ is exact and cocommutative.
\end{enumerate}
If these assertions hold then  the induced Hopf algebra $H$ of $F$ is commutative and  the group scheme of assertion (i) is $G=\Spec H$.
\end{theorem}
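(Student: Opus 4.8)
The plan is to prove the equivalence of (i) and (ii) in Theorem~\ref{equiv-monad-general} by reducing each implication to the structural results already established for $\LL$-module tensor categories and cocommutative normal Hopf monads. For the implication (i) $\implies$ (ii), I would start from the hypothesis that $F$ is an equivariantization under a tensor action of a finite group scheme $G$ on $\D$. By definition, this means $\C$ is tensor equivalent (over $\D$) to $\D^G = \D^\LL$, where $\LL = \CoRep \OO(G)$ and $\OO(G)$ is a finite-dimensional commutative Hopf algebra. Applying the second construction in the preceding Proposition to the $\LL$-module tensor category structure on $\D$, I obtain from Assertion (1) of that Proposition a \kt linear faithful exact Hopf monad $T' = A \odot ?$ on $\D$ with $A = (\OO(G),\Delta)$, which is normal and cocommutative. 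The canonical tensor isomorphism $\D^\LL \simeq \D^{T'}$ then identifies $T'$ with the Hopf monad $T$ of $F$ (up to the equivalence $\C \simeq \D^G$ compatible with the forgetful functors), so $T$ is exact and cocommutative, which is exactly assertion (ii).

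For the reverse implication (ii) $\implies$ (i), I would begin with a normal cocommutative \kt linear faithful exact Hopf monad $T$ on $\D$ with induced Hopf algebra $H$. The key observation is that the induced Hopf algebra $H$ of such a $T$ is commutative: cocommutativity of $T$ translates, via Lemma~\ref{def-cocom}, into the statement that the induced central coalgebra half-braiding $\hat{\sigma}$ is the trivial half-braiding, and this forces the Hopf algebra $\Kh = H^*$ to be cocommutative, equivalently $H$ to be commutative. (This is the algebraic shadow of Proposition~\ref{tannakian}, where centrality forced $r = \varepsilon \otimes \varepsilon$ and hence commutativity of the induced Hopf algebra.) Granting commutativity of $H$, I set $G = \Spec H$, a finite group scheme, and $L = \Kh = H^*$, so that $L$ is a finite-dimensional cocommutative Hopf algebra and $\LL = \CoRep L = \OO(G)\mbox{-}\textrm{mod}$ matches the definition of a tensor action of $G$. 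Applying Assertion (2) of the preceding Proposition to $T$ equips $\D$ with the structure of an $\LL$-module tensor category, that is, a tensor action of the group scheme $G$; and the ``moreover'' clause of that Proposition provides a tensor isomorphism $\D^\LL \simeq \D^T$. Since $\C \simeq \D^T$ canonically (as $F$ is monadic with Hopf monad $T$), this exhibits $F$ as the equivariantization $\D^G \to \D$, proving (i).

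Finally, the last sentence of the statement — that $H$ is commutative and $G = \Spec H$ — is already settled along the way: the commutativity of $H$ is the crux of the (ii) $\implies$ (i) direction, and the identification of the group scheme as $\Spec H$ is built into the construction $L = H^*$, $\LL = \CoRep L$.

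The step I expect to be the main obstacle is verifying carefully that the two constructions of the preceding Proposition are genuinely mutually inverse \emph{as tensor data}, and in particular that the canonical \kt linear isomorphism $\C^\LL \simeq \C^T$ from Proposition~\ref{action-normal} upgrades to a \emph{tensor} isomorphism under which the equivariantization structure matches the $T$-module tensor structure. This requires tracking the comonoidal structure of $T$, the cotensor products $\square^\Kh$, and the half-braidings through both directions, and checking that the coherence diagrams defining a functor of $\LL$-module categories are satisfied. Since this bookkeeping is precisely the content of the preceding Proposition, I would invoke it rather than redo the coherence verifications, so that the proof of Theorem~\ref{equiv-monad-general} itself reduces to the clean two-line dictionary between group scheme actions and normal cocommutative Hopf monads described above.
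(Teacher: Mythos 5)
Your proposal is correct and is essentially the paper's own proof: the paper deduces Theorem~\ref{equiv-monad-general} directly from Proposition~\ref{action-normal} and the proposition on $\LL$-module tensor categories immediately preceding the theorem ("From the results of the previous section, we deduce\dots"), exactly as you do, including the key point that cocommutativity of $T$ forces $H$ to be commutative, so that $G=\Spec H$ is a finite group scheme and the construction $L=\Kh=H^*$, $\LL=\CoRep{L}$, together with the tensor isomorphism $\D^\LL\simeq\D^T$, yields both implications and the final clause. The only blemish is a harmless notational slip in your first paragraph, where you write $\LL=\CoRep{\OO(G)}$ instead of the paper's definition $\LL=\CoRep{\kk[G]}=\OO(G)\mbox{-}\mathrm{mod}$ (the cotensoring Hopf algebra must be the cocommutative one, $\kk[G]=\OO(G)^*$, for Assertion (1) of that proposition to apply); since you use the correct identification $\LL=\CoRep{L}$ with $L=H^*=\kk[G]$ in your second paragraph, this is a slip rather than a gap.
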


\begin{theorem}\label{cocom-central}
Let $\C' \to \C \to \C''$ be an exact sequence of finite tensor categories, and let $T$ be the associated normal Hopf monad on $\C''$.
Then the following assertions are equivalent:
\begin{enumerate}[(i)]
\item The exact sequence $\C' \to \C \to \C''$ is central;
\item The normal Hopf monad $T$ is cocommutative.
\end{enumerate}
\end{theorem}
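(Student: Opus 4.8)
The plan is to translate both conditions into a single statement about the half-braiding $\hat{\sigma}$ of the induced central coalgebra, and then compare. Writing $\C\simeq{\C''}^T$, the induced central coalgebra $(\hat{C},\hat{\sigma})\in\Z(\C)$ has $\hat{C}=L\un$, so $F\hat{C}=T\un$, which is trivial in $\C''$ by normality; recall also that $\KER_F=\bra{\hat{C}}$. By Lemma~\ref{def-cocom}, assertion (ii) is equivalent to $\hat{\sigma}_{(M,r)}=\tau_{T\un,M}$ for every $T$-module $(M,r)$, i.e. to the statement that $F(\hat{\sigma}_Y)=\tau_{T\un,FY}$ in $\C''$ for all $Y\in\C$: \emph{the half-braiding of $\hat{C}$ is trivial after applying $F$}. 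On the other hand, by the characterization of central exact sequences in \S\ref{sect-exact}, assertion (i) is equivalent to the existence of a central lifting $\tilde{i}:\C'\to\Z(\C)$ of $i$ with $\tilde{i}(\hat{C})=(\hat{C},\hat{\sigma})$; since $\KER_F=\bra{\hat{C}}$, such a lifting is nothing but a coherent tensor family of half-braidings on $\bra{\hat{C}}$ extending $\hat{\sigma}$.

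For (ii)$\implies$(i) I would construct the central lifting explicitly. Given $X\in\KER_F$ (so $FX$ is trivial) and an arbitrary $Y=(N,q)\in{\C''}^T=\C$, I would show that the trivial half-braiding $\tau_{FX,N}$ of $\C''$ lifts to a morphism of $T$-modules $s^X_Y:X\otimes Y\to Y\otimes X$, which amounts to the identity
\begin{equation*}
\tau_{FX,N}\,(r\otimes q)\,T_2(FX,N)=(q\otimes r)\,T_2(N,FX)\,T(\tau_{FX,N}),
\end{equation*}
$r$ being the action of $X$. This is exactly where cocommutativity enters, through the reformulation $T_2(\,\cdot\,,\un)=\tau_{T\un,T(\,\cdot\,)}\,T_2(\un,\,\cdot\,)$ of Lemma~\ref{def-cocom}(ii) combined with triviality of $FX$ and naturality of $T_2$. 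Naturality and multiplicativity of $\tau$ then make $X\mapsto(X,s^X)$ a tensor functor $\tilde{i}:\KER_F\to\Z(\C)$ lifting $i$; and since $F(s^{\hat{C}})=\tau=F(\hat{\sigma})$ by cocommutativity while $F$ is faithful, one gets $s^{\hat{C}}=\hat{\sigma}$, i.e. $\tilde{i}(\hat{C})=(\hat{C},\hat{\sigma})$, so the sequence is central. When the sequence is moreover perfect (so $T$ is exact) this direction is painless: by the proposition following Proposition~\ref{action-normal} a normal cocommutative exact Hopf monad exhibits $\C$ as an equivariantization ${\C''}^{G}$ with $G=\Spec H$, and equivariantization exact sequences are central by (the group-scheme version of) Example~\ref{equiv-nec}.

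The direction (i)$\implies$(ii) is the main obstacle. Assuming centrality, Proposition~\ref{tannakian} already gives that $\C'$ carries a symmetry, that $H$ is commutative, and that $\tilde{i}$ is braided; unwinding its proof yields $F(\hat{\sigma}_Y)=\tau_{T\un,FY}$ \emph{whenever $Y\in\KER_F$}. The difficulty is that cocommutativity demands this triviality against \emph{all} objects $Y$ of $\C$, whereas the internal symmetry of $\bra{(\hat{C},\hat{\sigma})}$ controls only the half-braidings between objects of $\KER_F$. To bridge this gap I would pass to the double of the Hopf monad $T$ of \cite{BV}: realizing $\Z(\C)=\Z({\C''}^T)$ as the modules over the double produces the commutative diagram with exact top row
$$\C'\toto\Z(\C)\overset{G}\toto\Z_F(\C''),$$
where $\Z_F(\C'')$ is the center of $\C''$ relative to $F$ and $G$ sends $(X,s)$ to $(FX,F(s))$. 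Cocommutativity is precisely the assertion that $G(\hat{C},\hat{\sigma})$ is trivial in $\Z_F(\C'')$, and exactness of the top row identifies the kernel of $G$ with the image of the canonical central lifting, so that centrality forces $(\hat{C},\hat{\sigma})=\tilde{i}(\hat{C})$ into that kernel, whence $F(\hat{\sigma})=\tau$. Establishing this exact top row — equivalently, analyzing $\Z(\C)$ through the double of $T$ — is the technical heart and is carried out in \S\ref{proof-main}; in the fusion case one may argue instead via modularization, dominating the sequence by the modularization exact sequence $\C'\to C_{\Z(\C)}(\C')\to\Z(\C'')$.
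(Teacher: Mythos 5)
Your framework is the paper's own---Lemma~\ref{def-cocom}, the relative center $\Z_F(\C'')$, and the double of the Hopf monad---and your direction (ii)$\implies$(i) is essentially sound: writing $FX\simeq\un^n$ for $X\in\KER_F$ and applying cocommutativity to each component $T\un\to\un$ of the action, together with naturality of $T_2$ and of $\tau$, does make the trivial half-braiding $\tau_{FX,N}$ a morphism of $T$-modules, and this hands-on construction of the lifting is if anything more direct than the paper's, which instead extracts the lifting from the exact sequence $\bra{\A}\toto\Z(\C)\toto\Z_F(\C'')$ that cocommutativity provides.

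The genuine gap is in (i)$\implies$(ii), exactly where you place ``the technical heart''. Your plan is to establish exactness of the top row $\C'\toto\Z(\C)\overset{G}\toto\Z_F(\C'')$ and then deduce cocommutativity from membership of $(\hat{C},\hat{\sigma})$ in $\KER_G$. But exactness of that row requires $G=\tilde{U}_T$ to be \emph{normal}, and normality of $\tilde{U}_T$ is \emph{equivalent} to cocommutativity of $T$: by Lemma~\ref{central-induced} the induced coalgebra of $\tilde{U}_T$ is $(\hat{C},\hat{\sigma})$, and its image being trivial in $\Z_F(\C'')$ means precisely that $\hat{C}$ is trivial in $\C''$ (normality of $T$) \emph{and} that $\hat{\sigma}$ is the trivial half-braiding, which by Lemma~\ref{def-cocom} is cocommutativity. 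So ``establish the exact top row'' is not an available intermediate step under hypothesis (i); it is a restatement of conclusion (ii), and the section you defer to proves that exactness only \emph{after} assuming cocommutativity. The circle is broken by an ingredient your proposal never invokes: Lemma~\ref{normal-algebra}, which says a tensor functor between finite tensor categories is normal if and only if its induced algebra is \emph{self-trivializing}. Centrality gives a tensor equivalence $\bra{\A}\to\bra{A}$ carrying $\A=(A,\sigma)$ to $A$; since $F$ is normal, $A$ is self-trivializing, hence so is $\A$; by Lemma~\ref{central-induced}, $\A$ is the induced algebra of $\tilde{U}_T$, so Lemma~\ref{normal-algebra} forces $\tilde{U}_T$ to be normal, and unwinding that normality as above yields cocommutativity. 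This purely algebraic transfer of self-trivialization across the equivalence $\bra{\A}\simeq\bra{A}$ is the missing idea; it requires no a priori exactness of the top row, which then comes out as a consequence rather than a hypothesis.
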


Theorem~\ref{cocom-central} will be proved in Section~\ref{proof-cocom-central}.

\begin{corollary}\label{dom-equiv} Consider a morphism of exact sequences of  finite  tensor categories
$$\xymatrix{
(\E_0)\ar[d]  &\C'_0 \ar[r]\ar[d]_W &\C_0 \ar[r]\ar[d]^U & \C''_0 \ar[d]^V\\
(\E) & \C' \ar[r] & \C \ar[r] &\C''
\
}$$
such that the vertical arrows are dominant tensor functors. Then
\begin{enumerate}\item
 if $(\E_0)$ is central, so is $(\E)$.
\item
If $(\E_0)$ is an equivariantization exact sequence for a finite
group scheme $G$, with  $G$  discrete  or $(\E)$ perfect,
then $(\E)$ is an equivariantization exact sequence for a subgroup
$G' \subset G$ acting on $\C''$ in a manner compatible with $V$.
Moreover if $W$ is an equivalence, then $G'= G$.
\end{enumerate}
\end{corollary}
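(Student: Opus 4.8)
The plan is to treat both parts through the induced Hopf algebras and the Hopf-monadic characterizations of centrality and equivariantization. Write $H_0$, $H$ for the induced Hopf algebras and $T_0$, $T$ for the normal Hopf monads (on $\C''_0$ and $\C''$) of $(\E_0)$ and $(\E)$. By Lemma~\ref{exact-dominant} the morphism induces a Hopf algebra map $\phi\colon H_0\to H$; since $W$ is dominant, $\phi$ is surjective, so $H$ is a quotient Hopf algebra of $H_0$. As $W$ and $V$ are dominant, Lemma~\ref{exact-dominant}(2) shows $U$ is dominant as well, so all functors in sight are dominant and admit adjoints.

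For part (1), Theorem~\ref{cocom-central} reduces the claim to showing that cocommutativity of $T_0$ forces cocommutativity of $T$. I would use the form of cocommutativity in Lemma~\ref{def-cocom}(ii), namely the identity $T_2(Y,\un)=\tau_{T\un,TY}\,T_2(\un,Y)$ for all $Y\in\C''$. Both sides are natural transformations $T(-)\Rightarrow T(-)\otimes T\un$, and since $T$ is exact the target functor preserves monomorphisms; hence, by dominance of $V$ and the fact that every object of $\C''$ embeds into some $V(Y_0)$, it suffices to verify the identity for objects of the form $Y=V(Y_0)$, $Y_0\in\C''_0$. To descend from $Y_0$ to $V(Y_0)$ I would compare the two Hopf monads through $V$: from $VF_0\cong FU$ and the unit of the adjunction $(L_0,F_0)$ one obtains a canonical comparison $TV\Rightarrow VT_0$, which one checks intertwines the comonoidal structures $T_{0,2}$ and $T_2$; feeding in the cocommutativity identity for $T_0$ at $Y_0$, and using that the tensor functor $V$ carries the trivial half-braiding $\tau_{T_0\un,-}$ to $\tau_{T\un,-}$ (with the unit levels related through $\phi$), then yields the identity for $T$ at $V(Y_0)$. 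Thus $T$ is cocommutative and $(\E)$ is central.

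For part (2), assume $(\E_0)$ is an equivariantization for the finite group scheme $G$. By Theorem~\ref{equiv-monad-general}, $H_0$ is commutative with $G=\Spec H_0$, and $T_0$ is exact and cocommutative. Part (1) gives that $T$ is cocommutative, and $H$, being a quotient of $H_0$, is commutative; the surjection $\phi\colon H_0\to H$ of commutative Hopf algebras dualizes to a closed immersion of finite group schemes $G':=\Spec H\hookrightarrow\Spec H_0=G$, so $G'$ is a subgroup scheme of $G$. If $(\E)$ is perfect, then $T$ is exact, and Theorem~\ref{equiv-monad-general} yields that $(\E)$ is an equivariantization for $G'=\Spec H$. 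If instead $G$ is discrete, then $H_0$ is split semisimple, hence so is its quotient $H$; Proposition~\ref{equiv-ss} (normal cocommutative $T$ together with $H$ split semisimple) then shows $(\E)$ is an equivariantization for the discrete group $G'=\Spec H$, and is automatically perfect. In both cases the $G'$-action on $\C''$ is the one reconstructed from the cocommutative Hopf monad $T$ via Proposition~\ref{action-normal} and the $\LL$-module tensor category formalism (with $K=H^*$ and $\LL=\CoRep K$); that this action is compatible with $V$---i.e.\ that $V$ intertwines the restriction to $G'$ of the $G$-action on $\C''_0$ with the $G'$-action on $\C''$---follows from the morphism of exact sequences and the naturality of the reconstruction through $\phi$. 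Finally, if $W$ is an equivalence then $\phi$ is an isomorphism by Lemma~\ref{exact-dominant}, whence $G'=G$.

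The step I expect to be the main obstacle is the transfer of cocommutativity in part (1): one must make precise that the comparison $TV\Rightarrow VT_0$ is compatible with the comonoidal structures $T_{0,2}$, $T_2$ and with the relative trivial half-braidings, carefully distinguishing triviality relative to $F_0$ inside $\C''_0$ from triviality relative to $F$ inside $\C''$, and correctly matching $T\un$ with $VT_0\un$ through the dual map $\phi^*\colon H^*\hookrightarrow H_0^*$. Once part (1) is secured, the equivariantization statement in part (2) is a formal consequence of Theorem~\ref{equiv-monad-general} and Proposition~\ref{equiv-ss}, the only remaining bookkeeping being the verification---routine once the constructions of Proposition~\ref{action-normal} are unwound relative to $\phi$---that the reconstructed $G'$-action is genuinely the restriction along $V$ of the $G$-action.
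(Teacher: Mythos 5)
Your overall architecture matches the paper's (surjective $\phi\colon H_0\to H$ via Lemma~\ref{exact-dominant}, reduction of everything to cocommutativity via Theorem~\ref{cocom-central}, then Theorem~\ref{equiv-monad-general} in the perfect case and Proposition~\ref{equiv-ss} in the discrete case, with $H$ split semisimple as a quotient of $H_0$), and your part (2) is correct granted part (1). But part (1) --- precisely the step you flag as the main obstacle --- has a genuine gap, in fact two. The first is your reduction to objects $Y=V(Y_0)$: you invoke ``since $T$ is exact'', but exactness of $T$ is equivalent to perfectness of $(\E)$, which is \emph{not} assumed in part (1); $T$ is only right exact, so for a monomorphism $m\colon Y\hookrightarrow V(Y_0)$ the morphism $T(m)$ need not be mono and you cannot cancel $T(m)\otimes T\un$ in the naturality squares. (This one is repairable: by rigidity, dominance of $V$ also makes every object of $\C''$ a \emph{quotient} of some $V(Y_0)$, and right exactness of $T$ lets you cancel the epimorphism $T(e)$ instead.)

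The second gap is not repairable within your scheme without a substantial missing ingredient. To transfer identity (ii) of Lemma~\ref{def-cocom} from $Y_0$ to $V(Y_0)$ through the comparison $\Lambda\colon TV\Rightarrow VT_0$, comonoidality of $\Lambda$ plus cocommutativity of $T_0$ gives, up to canonical isomorphisms, $(\Lambda_{Y_0}\otimes\Lambda_\un)\,T_2(VY_0,\un)=(\Lambda_{Y_0}\otimes\Lambda_\un)\,\tau_{T\un,TVY_0}\,T_2(\un,VY_0)$, and to strip off $\Lambda_{Y_0}\otimes\Lambda_\un$ you need $\Lambda_{Y_0}$ to be a monomorphism for \emph{every} $Y_0$: monicity of the unit component $\Lambda_\un$ (the transpose of the surjection $\phi$) is not enough, since a tensor product of morphisms is mono only if each factor is. That $\Lambda$ is a monomorphism is exactly the fact the paper remarks ``one can show'' but pointedly refuses to use, and it is not routine. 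The paper's proof is organized to avoid it: it uses characterization (iii) of Lemma~\ref{def-cocom} rather than (ii). The half-braiding $\hat\sigma_{(M,r)}\colon T\un\otimes M\to M\otimes T\un$ involves $T\un$ but never $TM$, so comonoidality of $\Lambda$ compares $\hat\sigma_{U(M,r)}$ with $V\bigl((\hat\sigma_0)_{(M,r)}\bigr)$ through $\Lambda_\un$ alone; monicity of $\Lambda_\un$ then gives $\hat\sigma_{U(M,r)}=\tau$, and one concludes for all $T$-modules from dominance of $U$ (every $T$-module embeds in some $U(M,r)$), naturality of $\hat\sigma$ in module morphisms, and exactness of $\otimes$ --- no left exactness of $T$ and no monicity of $\Lambda$ at general objects. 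You should either rework part (1) along these lines or supply a proof that $\Lambda$ is a monomorphism.
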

\begin{proof}
Let $T_0$, $T$ be the normal Hopf monads, and $H_0$, $H$ the induced Hopf algebras of the exact sequences $(\E_0)$ and $(\E)$ respectively. Then we may assume that $(\E_0)$ and $(\E)$ are of the form  $\CoRep H_0 \to {\C''_0}^{T_0} \to \C''_0$ and $\CoRep H \to {\C''}^T \to \C''$ respectively, and we have a  diagram of tensor functors:
$$\xymatrix{
\CoRep H_0 \ar[r]\ar[d]_{W} &{\C''_0}^{T_0}\ar[r]^{U_{T_0}}\ar[d]^U & \C''_0 \ar[d]^{V}\\
\CoRep H \ar[r] & {\C''}^T \ar[r]_{U_T} &\C'' \ }$$ with $U'$,
$U$, $U''$ dominant, which commutes up to tensor isomorphisms.  By
transport of structure, we may assume that $U_T U = V U_{T_0}$ as
tensor functors.

Let us assume $(\E_0)$ is central, and let us show that $(\E)$ is
central. By  Theorem~\ref{cocom-central},
$T_0$ is cocommutative, and we are to prove that $T$ is cocommutative too.

If $(X,r)$ is an object of ${\C_0''}^{T_0}$ then $U(X,r) = (V(X),\lambda(X,r))$, so that we have a natural transformation
$\lambda:T VU_{T_0} \to V U_{T_0}$, which by adjunction can be encoded as a natural transformation $\Lambda: T V \to V T_0$ such that
$$U(X,r) = (VX, Vr \,\Lambda_X),\quad \mbox{for any $(X,r)$ in ${\C_0''}^{T_0}$.}$$
The transformation $\Lambda$ is compatible with the monad structures of $T_0$ and $T$, and it is comonoidal because $ U_{T} U =V U_{T_0}$ as tensor functors.

The tensor functor $W$ is induced by a morphism of Hopf algebras
$\phi: H_0 \to H$, which is surjective because $W$ is dominant
(see Lemma~\ref{exact-dominant}).  In particular $H$ is
commutative, and the group scheme $G = \Spec H$ is a subgroup of
the group scheme $G_0= \Spec (H_0)$ associated with the
central  exact sequence $(\E_0)$.

On the other hand, we have  $TV(\un)  = H^* \otimes \un$
and $V T_0(\un) = H_0^* \otimes \un$, and via these isomorphisms
$\Lambda_\un$ is the transpose of $\phi$; therefore
$\Lambda_\un$ is a monomorphism (in fact, one can show that
$\Lambda$ is monomorphism, a fact we will not use). Denote by
$(\hat{C}_0,\hat{\sigma}_0)$ and $(\hat{C},\hat{\sigma})$ the
induced central coalgebras of $T_0$ and $T$ respectively. Let
$(M,r)$ be a $T_0$\ti module. One deduces easily from the
comonoidality of  $\Lambda$  that the following diagram commutes:
$$\xymatrix{
T\un \otimes VM \ar[dd]_{\hat{\sigma}_{U(M,r)}}\ar[rr]^{\Lambda_\un \otimes VM}&& VT_0\un \otimes VM \ar[r]^{\simeq}& V(T_0\un \otimes M)
\ar[dd]^{V(({\hat{\sigma}_0})_{(M,r)})}\\
& TV(M) \ar[lu]^{\simeq}\ar[ld]_{\simeq}\ar[r]^{\Lambda_M}& VT_0(M) \ar[ur]_{\simeq}\ar[dr]^{\simeq}& \\
VM \otimes T\un \ar[rr]_{VM \otimes \Lambda_\un}&& VM \otimes VT_0\un \ar[r]_{\simeq}& V(M \otimes T_0\un)
}$$
where the slanted arrows are the fusion isomorphisms. Since $\Lambda_\un$ is a monomorphism and, by assumption, $\hat{\sigma}_0$ is the trivial half-braiding, we see that $\hat{\sigma}_{U(M,r)}$ is the trivial half-braiding. The tensor functor $U$ being dominant,  $\hat{\sigma}$ is the trivial half-braiding, that is, $T$ is cocommutative.

In particular if $(\E_0)$ is an equivariantization under the
action of a group scheme $G$,  it is central so $T$ is
cocommutative. If $G$ is  discrete  or $(\E)$ is perfect
(hence $T$ is exact), then by Proposition~\ref{equiv-ss} or
Theorem~\ref{equiv-monad-general} $(\E)$ is an equivariantization
exact sequence, corresponding with a tensor action of $G \subset
G_0$ on $\C''$, which by construction is compatible with the
tensor action of $G_0$ on $\C_0''$ via $V$. If $W$ is an
equivalence, $\phi$ is an isomorphism so $G= G_0$.
\end{proof}

\section{Equivariantization: special cases}\label{sect-special}

\subsection{The braided case}

A \emph{braided exact sequence of tensor categories} is an exact sequence of tensor categories
$$\C' \overset{i}\toto \C  \overset{F}\toto \C''$$
such that the tensor categories $\C$, $\C''$ and the tensor functor $F$ are braided. This implies that $\C'$ admits a unique braiding such that $i$ is braided, too.

\begin{proposition}\label{equiv-braided}
A braided exact sequence of finite tensor categories is  central.
In particular, if it is perfect,  it is an equivariantization
exact sequence.
\end{proposition}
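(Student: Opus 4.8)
The plan is to use the braiding of $\C$ to manufacture the canonical central lifting demanded by centrality, and to reduce the whole statement to a single compatibility between that lifting and the induced central algebra.

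First I would recall that a braiding $c$ on $\C$ produces a canonical braided tensor functor $\gamma:\C\to\Z(\C)$, $X\mapsto (X,c_{X,-})$, satisfying $\U\gamma=\id_\C$, where $\U:\Z(\C)\to\C$ is the forgetful functor. Composing with $i$ gives a strong monoidal functor $\tilde{\imath}:=\gamma i:\C'\to\Z(\C)$ with $\U\tilde{\imath}=i$, that is, a central lifting of $i$. By the characterization of central exact sequences in Section~\ref{sect-exact} (condition (ii)), to prove that $(\E)$ is central it then suffices to check that this lifting carries the induced algebra $A=R(\un)$ to the induced central algebra, i.e. that $\gamma(A)=(A,c_{A,-})$ is isomorphic in $\Z(\C)$ to $(A,\sigma)$. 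Concretely this amounts to the single identity $\sigma=c_{A,-}$, the orientation being fixed by the chosen convention for $\gamma$.

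Establishing $\sigma=c_{A,-}$ is the crux and the main obstacle. I would attack it by unwinding the construction of the induced central (co)algebra in \cite[Section~6]{tensor-exact} and \cite{BLV}: the coalgebra $\hat{C}=L(\un)$ and its half-braiding $\hat{\sigma}$ are built from the comonoidal structure $L_2$ of the left adjoint $L$ of $F$ together with the unit and counit of the adjunction, and $(A,\sigma)$ is the right dual of $(\hat{C},\hat{\sigma})$. The key input is that, $F$ being a braided tensor functor, its left adjoint $L$ is braided-comonoidal: the hexagon axiom for $F$ transports, by the usual doctrinal-adjunction (mate) argument, into the compatibility of $L_2$ with the braidings of $\C$ and $\C''$. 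Feeding this compatibility into the formula for $\hat{\sigma}$ identifies it with the braiding-induced half-braiding on $\hat{C}$, whence dually $\sigma=c_{A,-}$. Equivalently, one may invoke Theorem~\ref{cocom-central}, which identifies centrality with cocommutativity of the Hopf monad $T=FL$ on $\C''$, and verify cocommutativity directly: writing the comonoidal structure as $T_2=F_2^{-1}\,F(L_2)$ and using braidedness of $F$ together with the braided-comonoidality of $L$, the two endomorphisms $(x\otimes TX)T_2(\un,X)$ and $(TX\otimes x)T_2(X,\un)$ of $TX$ coincide for every $x:T\un\to\un$. Either way the essential difficulty is the same: one must pin down the specific central structure $\sigma$, since merely exhibiting some central lift of $\bra{A}$—which the braiding trivially provides—does not by itself force the particular algebra $(A,\sigma)$ to be the one generating $\bra{\A}$.

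Granting $\sigma=c_{A,-}$, the exact sequence $(\E)$ is central. For the final assertion, suppose in addition that $(\E)$ is perfect, so that its Hopf monad $T$ is exact. Since $(\E)$ is central, Theorem~\ref{cocom-central} shows that $T$ is cocommutative; hence $T$ is exact and cocommutative, and Theorem~\ref{equiv-monad-general} shows that $F$ is an equivariantization under the tensor action of the finite group scheme $\Spec H$, where $H$ is the induced Hopf algebra of $(\E)$. Thus $(\E)$ is an equivariantization exact sequence, as claimed.
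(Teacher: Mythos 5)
Your argument is correct, but your primary route to centrality is genuinely different from the paper's. The paper's own proof is a chain of citations: the Hopf monad $T=FL$ of a braided tensor functor is braided by \cite[Proposition 5.29]{tensor-exact}, a braided $\kk$-linear right exact normal Hopf monad is cocommutative by \cite[Proposition 5.30]{tensor-exact}, and then Theorem~\ref{cocom-central} (in the direction cocommutative $\Rightarrow$ central) together with Theorem~\ref{equiv-monad-general} concludes. You instead verify the definition of centrality directly in $\Z(\C)$: the braiding supplies the lifting $\tilde{\imath}=\gamma i$, and --- as you rightly stress, since the mere existence of some lifting is not enough --- everything rests on the identity $\sigma=c_{A,-}$, equivalently $\hat{\sigma}=c_{\hat{C},-}$. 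Although you only sketch that step, it does go through exactly as you indicate: the mate of the braided-functor axiom for $F$ makes $L$ braided-comonoidal, so $L_2(FX,\un)=c_{L\un,LFX}\,L_2(\un,FX)$ because the braiding of $\C''$ with the unit is trivial; composing with the counit $\eps_X\colon LF(X)\to X$ and using naturality of $c$ shows that the two legs $LF(X)\to L\un\otimes X$ and $LF(X)\to X\otimes L\un$ whose comparison defines $\hat{\sigma}_X$ differ exactly by $c_{L\un,X}$, whence $\hat{\sigma}=c_{L\un,-}$. Two remarks on the comparison. First, you and the paper use Theorem~\ref{cocom-central} in opposite directions: the paper needs cocommutative $\Rightarrow$ central, while you need central $\Rightarrow$ cocommutative in order to apply Theorem~\ref{equiv-monad-general}; your alternative route (checking cocommutativity of $T$ directly from $T_2=F_2^{-1}F(L_2)$) is in effect the paper's proof with its citations unpacked, and combining it with your primary argument would make the statement independent of Theorem~\ref{cocom-central} altogether. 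Second, what your approach buys is an explicit description of the canonical central lifting as the braiding-induced embedding $X\mapsto(X,c_{X,-})$ restricted to $\C'$ --- in particular it exhibits $\sigma$ as the braiding itself, consistent with the transparency of $\C'$ in $\C$ --- whereas the paper's approach buys brevity by absorbing all the work into previously established facts about braided Hopf monads.
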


\begin{proof} Let $\C' \overset{i}\toto \C  \overset{F}\toto \C''$ be a braided exact sequence of tensor categories.
Then the Hopf monad of $F$ is braided by \cite[Proposition
5.29]{tensor-exact}, and it is \kt linear, right exact, and
normal, so it is cocommutative by \cite[Proposition
5.30]{tensor-exact}. Therefore, the exact sequence is central by
Theorem~\ref{cocom-central}  and, if it is perfect,   it is
equivariantization exact sequence by Theorem
\ref{equiv-monad-general}.
\end{proof}

\subsection{The fusion case}

Recall that if $\B$ is a braided category with braiding $c$, and $\Aa \subset \B$ is a set of objects, or a full subcategory of $\B$, then the \emph{centralizer of $\Aa$ in $\B$}, denoted by $C_\B(\Aa)$, is the full monoidal subcategory of $\B$ of objects $b$ satisfying $c_{b,a}c_{a,b} = \id_{a \otimes b}$
for any object $a$ in $\Aa$. If $\B$ is a fusion braided category, then $C_\B(\Aa)$ is a full fusion subcategory of $\B$ (see \cite{mueger}).

\begin{proposition}\label{fusion-equiv} Let $\kk$ be an algebraically closed field of characteristic $0$. Consider a central exact sequence of fusion categories
$$(\E) \qquad\C' \toto \C \toto \C''$$
with canonical lifting
$\tilde{i}: \C' \to \Z(\C)$, and set $\Aa = \tilde{i}(\C')$.
Let $C_{\Z(\C)}(\Aa)$ denote the centralizer of $\Aa$ in $\Z(\C)$.
Then the following holds:
 \begin{enumerate}[(1)]
\item we have a braided exact sequence of fusion categories (in fact a modularization exact sequence)
$$(\E_0)\qquad \Aa  \toto C_{\Z(\C)}(\Aa) \toto  \Z(\C'');$$
\item we have a morphism $(\E_0) \to (\E)$ of exact sequences of fusion categories:
$$\xymatrix{
\Aa \ar[d]_\simeq \ar[r] & C_{\Z(\C)}(\Aa)\ar[d] \ar[r] & \Z(\C'')\ar[d]\\
\C'\ar[r] & {\C} \ar[r] & \C''
,}$$
where the vertical arrows are dominant forgetful functors;
\item there is a finite group $G$ acting on $\Z(\C'')$ and on $\C''$ by tensor autoequivalences in a compatible way,
in such a way that  $\E_0$ and $\E$ are the equivariantization exact sequences relative to these actions.
\end{enumerate}

\begin{remark}
This proposition says essentially the same thing as
\cite[Proposition 2.10]{ENO2}(i), with a different viewpoint.
Indeed \cite[Proposition 2.10]{ENO2}(i) asserts that if $\C$ is a
fusion category over $\mathbb C$ and $\Aa \subset \Z(\C)$ is a
full tannakian subcategory such that the forgetful functor $\U:
\Z(\C) \to \C$ induces an equivalence of $\Aa$  with  a
tensor subcategory of $\C$, then $\Aa$ encodes a
de-equivariantization of $\C$, that is, a tensor action of a
finite group $G$ (such that $\Aa \simeq \rep G$) on a category
$\D$ such that $\C \simeq \D^G$. This can be deduced from
Proposition~\ref{fusion-equiv}, as follows. Assume $\Aa \subset
\Z(\C)$ is as above. Since $\Aa$ is tannakian, it contains a
self-trivializing semisimple commutative algebra $\A=(A,\sigma)$
such that $\Hom(\un,\A) = \kk$. The forgetful functor $\U$ induces
by assumption a full tensor embedding $i: \Aa \to \C$. Moreover,
we have a tensor functor $F_A: \C \to \C_A$ and an exact sequence
of fusion categories
$$(\E) \qquad\Aa \overset{i}\toto \C \overset{F_A}\toto \C_A.$$
The inclusion $\Aa \subset \Z(\C)$ is a central lifting of $i$ which makes $(\E)$ a central exact sequence, hence the tensor functor $\C \to \C_A$ is an equivariantization.
\end{remark}

\end{proposition}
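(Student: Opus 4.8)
The plan is to realize the strategy announced in the introduction: construct the top row $(\E_0)$ as a modularization exact sequence whose third term is canonically $\Z(\C'')$, promote it to a morphism of exact sequences $(\E_0)\to(\E)$ with dominant forgetful functors, and then read off statement~(3) from Corollary~\ref{dom-equiv}.

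First I would set up the braided picture. Over an algebraically closed field of characteristic $0$ the center $\Z(\C)$ is a nondegenerate (modular) braided fusion category, and by Proposition~\ref{tannakian} and the remark following it $\Aa=\tilde i(\C')$ is a tannakian subcategory of $\Z(\C)$, equivalent to $\rep G$ with $G=\Spec H$ a finite group; moreover $\Aa=\bra{(A,\sigma)}$ for the induced central algebra $(A,\sigma)$, which is a connected \'etale (self-trivializing commutative) algebra. Since $\tilde i$ is braided and $\C'$ carries its symmetric braiding, $\Aa$ is symmetric, so $\Aa\subseteq C_{\Z(\C)}(\Aa)$. By M\"uger's double centralizer theorem in the modular category $\Z(\C)$ one has $C_{\Z(\C)}(C_{\Z(\C)}(\Aa))=\Aa$, whence the M\"uger center of $C_{\Z(\C)}(\Aa)$ is exactly the tannakian category $\Aa$. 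Therefore $C_{\Z(\C)}(\Aa)$ is modularizable, and its modularization --- the de-equivariantization by $G$, equivalently the category of local modules over $(A,\sigma)$ --- yields a modularization exact sequence $\Aa\toto C_{\Z(\C)}(\Aa)\toto C_{\Z(\C)}(\Aa)_G$. The identification I would invoke from the theory of local modules and centers of module categories is $C_{\Z(\C)}(\Aa)_G\simeq \Z(\C_A)\simeq\Z(\C'')$; a Frobenius-Perron count confirms compatibility, since $\FPdim C_{\Z(\C)}(\Aa)=(\FPdim\C)^2/|G|$ gives $\FPdim C_{\Z(\C)}(\Aa)_G=(\FPdim\C)^2/|G|^2=\FPdim\Z(\C'')$. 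This establishes~(1).

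Next, for~(2), I would take the vertical arrows to be forgetful functors. The left one $\Aa\to\C'$ is a quasi-inverse of $\tilde i$, hence an equivalence, and the left square commutes by the defining relation $\U\tilde i=i$. The middle arrow is $\U\colon C_{\Z(\C)}(\Aa)\to\C$ and the right arrow is the forgetful functor $\Z(\C'')\to\C''$. To check the right square I would realize the top functor $C_{\Z(\C)}(\Aa)\to\Z(\C'')$ as free induction $Y\mapsto Y\otimes A$ into local modules, whose underlying object in $\C''\simeq\C_A$ is $\U(Y)\otimes A=F_A(\U Y)$, so the square commutes up to isomorphism. Since the forgetful functor $\Z(\C'')\to\C''$ is dominant (each object embeds into the image of its induction) and the left vertical is an equivalence, Lemma~\ref{exact-dominant}(2) forces the middle functor $\U$ to be dominant as well; this produces the asserted morphism of exact sequences with dominant vertical arrows.

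Finally, for~(3): as a modularization, $(\E_0)$ is a braided exact sequence of fusion categories, hence perfect, so by Proposition~\ref{equiv-braided} it is an equivariantization exact sequence for the finite group $G=\Spec H$, which is discrete since $\car(\kk)=0$. Applying Corollary~\ref{dom-equiv}(2) to the morphism $(\E_0)\to(\E)$ --- whose vertical arrows are dominant, whose left arrow is an equivalence, and with $(\E)$ perfect --- yields that $(\E)$ is itself an equivariantization exact sequence for $G$, with $G$ acting compatibly on $\Z(\C'')$ and on $\C''$ through the forgetful functor $\Z(\C'')\to\C''$, as claimed. The hard part is the identification $C_{\Z(\C)}(\Aa)_G\simeq\Z(\C'')$ --- that the modularization of the centralizer recovers the center of the module category $\C_A$; the remaining steps merely assemble M\"uger's modularization and double centralizer theorems with Corollary~\ref{dom-equiv}.
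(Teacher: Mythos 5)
Your overall architecture coincides with the paper's proof: construct the top row as a braided (in fact modularization) exact sequence ending in $\Z(\C'')$, assemble the morphism of exact sequences with dominant vertical forgetful functors via Lemma~\ref{exact-dominant}, and deduce assertion (3) from Proposition~\ref{equiv-braided} together with Corollary~\ref{dom-equiv}. Your treatment of parts (2) and (3) is essentially identical to the paper's, and your use of M\"uger's double centralizer theorem to see that the M\"uger center of $C_{\Z(\C)}(\Aa)$ is exactly $\Aa$ is a harmless, even clarifying, addition that the paper does not spell out.

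The gap is at the step you yourself flag as ``the hard part'': you invoke the identification $C_{\Z(\C)}(\Aa)_G \simeq \Z(\C_A) \simeq \Z(\C'')$ from ``the theory of local modules and centers of module categories,'' but you do not point to a result giving this statement directly, and it is precisely what the paper proves as its key Lemma~\ref{dys-nondeg}. What the literature provides (\cite[Corollary 4.5]{schauenburg}) is the equivalence $\dys\Z(\C)_\A \simeq \Z(\C_\A)$ for dyslectic (local) $\A$\ti modules taken in the \emph{whole} center $\Z(\C)$; your de-equivariantization $C_{\Z(\C)}(\Aa)_G$ is by definition the category $C_{\Z(\C)}(\Aa)_\A$ of $\A$\ti modules whose underlying object lies in the \emph{centralizer}, and these two categories do not coincide a priori: one must rule out dyslectic modules whose underlying object fails to centralize $\Aa$. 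The paper closes exactly this gap as follows: the inclusion $C_{\Z(\C)}(\Aa)_\A \subseteq \dys\Z(\C)_\A$ is easy (an $\A$\ti module supported on the centralizer is automatically dyslectic), and the inclusion is an equality because both sides are full fusion subcategories of $\Z(\C)_\A$ of the same Frobenius--Perron dimension, computed via $\FPdim C_{\Z(\C)}(\Aa) = \FPdim\Z(\C)/\FPdim\Aa$ (nondegeneracy of $\Z(\C)$ and \cite[Theorem 3.14]{DGNOI}) on one side, and $\FPdim\Z(\C_A) = \left(\FPdim\C/\FPdim A\right)^2$ (\cite[Proposition 8.12]{ENO}) on the other. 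Note that this is exactly the dimension count you perform, but you present it merely as a ``compatibility confirmation'' of a cited equivalence; repurposed as above (an inclusion of full fusion subcategories with equal Frobenius--Perron dimensions is an equality), it converts your sketch into a complete proof.
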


\begin{proof}
Notice first that
assertion (3) derives immediately from assertions (1) and (2) and previous results: if (1) and (2) hold, then by Proposition~\ref{equiv-braided} $(\E_0)$ is an equivariantization exact sequence for the action of a finite group $G$ on $\Z(\C'')$. By Corollary~\ref{dom-equiv},  $(\E)$ is also an equivariantization exact sequence for an action of the same group $G$.

So the whole point is to construct the exact sequence $(\E_0)$ of
assertion (1)  and the morphism of exact sequences of assertion
(2). This is based on the following lemma.  Let us say that an
object $X$ of a braided category $\B$ with braiding $c$ is
\emph{symmetric} if $c_{X, X}^2 = \id_{X \otimes X}$.

\begin{lemma}\label{dys-nondeg} Let $\C$ be a fusion
category, let $\A = (A,\sigma)$ be a semisimple,  symmetric,
 self-trivializing commutative algebra in $\Z(\C)$ such that
$\Hom(\un,A) = \kk$, and let $\Aa = \bra{\A}$ be the fusion
subcategory of $\Z(\C)$ generated by $\A$.
Then $$C_{\Z(\C)}(\Aa)_\A = \dys \Z(\C)_\A \simeq \Z(\C_\A),$$
where $\dys \Z(\C)_\A$ denotes the category of  dyslectic $\A$\ti
modules in $\Z(\C)$.
\end{lemma}

\begin{proof}
Recall that if $\B$ is a braided category, with braiding $c$, and
$A$ is a commutative algebra in $\B$, then a \emph{dyslectic
$A$\ti module} (\cite[Definition 2.1]{pareigis}) is a right
$A$-module $(M,r: M \otimes A \to M)$ in $\B$ satisfying $r c_{\A,
M}c_{M, \A} = r$. The category $\dys \B_A$ of dyslectic $A$\ti
modules is a full monoidal  subcategory of $\B_A$ and it is
braided with braiding induced by $c$.

In the situation of the lemma, $\Z(\C)_\A$ is a
fusion category (because $\A$ is semisimple
and $\Hom(\un,\A)= \kk$), and $\dys \Z(\C)_\A$ is a full fusion
category of $\Z(\C)_\A$.

On the other hand, the fact that $\A$ is  symmetric  means
that it belongs to $C_{\Z(\C)}(\Aa)$, and the category
$C_{\Z(\C)}(\Aa)_\A$ is also a full fusion subcategory of
$\Z(\C)_\A$. Moreover, we have $$C_{\Z(\C)}(\Aa)_\A \subset \dys
\Z(\C)_\A$$ because a $A$\ti module $(M,r)$ such that $M$ belongs
to the centralizer of $\Aa$ is  dyslectic.

Now it follows from  \cite[Corollary 4.5]{schauenburg} that there is a natural  equivalence of braided tensor categories $\dys \Z(\C)_\A \simeq
\Z(\C_\A)$ which is compatible with the forgetful functors to $\C$.
%

All that remains to do is  to  show that the full, replete
inclusion of $C_{\Z(\C)}(\Aa)_\A$ in $\dys  \Z(\C)_\A$ is an
equality, which we do by showing that those two fusion categories
have the same Frobenius-Perron dimension. Now $\C_A$ is a fusion
category and $\FPdim \C_A = \FPdim\C/\FPdim A$ by Lemma
\ref{fpdim-A}(i). Since $\dys \Z(\C)_\A \simeq \Z(\C_A)$, we have
$$\FPdim \dys \Z(\C)_\A = \FPdim \Z(\C_A) = \left(\frac{\FPdim\C}{\FPdim A}\right)^2$$
by \cite[Proposition 8.12]{ENO}. On the other hand, $$\FPdim
C_{\Z(\C)}(\Aa)= \frac{\FPdim \Z(\C)}{\FPdim \A}$$ by
\cite[Theorem 3.14]{DGNOI}, since $\Z(\C)$ is a nondegenerate
fusion category. We have $\FPdim \Z(\C) =\FPdim (\C)^2$ again by
\cite[Proposition 8.12]{ENO},  and  since $\A$ is
self-trivializing, $\FPdim(\Aa) = \FPdim(\A) = \FPdim A$, because
 the forgetful functor $ \Z(\C)  \to \C$ preserves
Frobenisus-Perron dimensions. So
$$\FPdim C_{\Z(\C)}(\A)_\A= \frac{\FPdim(\C)^2}{\FPdim(A)^2} = \FPdim \dys \Z(\C)_\A,$$
and we are done.
This finishes the proof of the lemma.
\end{proof}

Now we apply the lemma. Let $\tilde{F}=F_\A: \C_{\Z(\C)}(\A) \to C_{\Z(\C)}(\A)_\A \simeq \Z(\C_A)$ be the functor `free $\A$\ti module' $X \mapsto X \otimes \A$. Then $\tilde{F}$ is a braided dominant normal fusion functor because $\A$ is semisimple and self-trivializing, so we have a braided exact sequence
of fusion categories
$$(\E_0)\qquad \Aa \to \C_{\Z(\C)}(\A) \to \Z(\C_A),$$
which is an equivariantization exact sequence by
Proposition~\ref{equiv-braided}, for an action of a certain finite
group $G$ on $\Z(\C)$. All our constructions are compatible with
the forgetful functors, hence  we get  a morphism of exact
sequences of fusion categories $(\E_0) \to (\E)$:
$$\xymatrix{
\Aa \ar[d]_\simeq \ar[r] & C_{\Z(\C)}(\Aa)\ar[d] \ar[r] & \Z(\C'')\ar[d]\\
\C'\ar[r] & {\C} \ar[r] & \C''}$$
The vertical arrow on the right is an equivalence because $(\E)$ is central and $\C' \simeq \bra{A}$, and the left vertical arrow is dominant because
it is the forgetful functor of the center. The middle vertical arrow is therefore dominant by virtue of Lemma~\ref{exact-dominant}, so assertion (2) holds.
By Corollary~\ref{dom-equiv}, $(\E)$
is an equivariantization exact sequence, for an action of $G$ on $\C$ which is compatible with the action of $G$ on $\Z(\C)$ and the forgetful functor
$\Z(\C) \to \C$.
\end{proof}

\subsection{The abelian case}

Let $\kk$ be a field. We say that a finite abelian group $G$ \emph{has the Kummer property} (\emph{w.r.t.} $\kk$) if $\kk$ contains $e$ distincts $e$-th roots of $1$, where $e$ is the exponent of $G$. If such is the case, the group of characters $\hat{\Gamma}_\kk$ of $G$ is isomorphic to $G$.
If $\kk$ is algebraically closed of characteristic $0$, all finite abelian groups have the Kummer property.


\begin{proposition}\label{equiv-abelian}
Let $F: \C \to \D$ be a dominant tensor functor between finite tensor categories over field $\kk$,  and denote by $\A=(A,\sigma)$ its induced central algebra. The following
assertions are equivalent:
\begin{enumerate}[(i)]
\item The functor $F$ is an equivariantization associated with an action of a finite abelian group
 $G$  having the Kummer property;
\item The induced central algebra $\A$ of $F$ is a direct sum of invertible
objects of $\Z(\C)$, and the finite abelian group $\Gamma$ formed
by the isomorphism classes of  these invertible objets has the
Kummer property;
\end{enumerate}
If these equivalent assertion hold, the groups $G$ of assertion (i) and $\Gamma$ of assertion (ii) are in duality, that is $G = \hat{\Gamma}_\kk$.
\end{proposition}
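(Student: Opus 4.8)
The plan is to prove the equivalence by reducing the abelian case to the centrality and normality criteria already established, and to pin down the duality $G = \hat{\Gamma}_\kk$ via the commutativity and cosemisimplicity of the induced Hopf algebra. First I would treat the implication (ii) $\implies$ (i). Assuming $\A = (A,\sigma)$ is a direct sum of invertible objects of $\Z(\C)$ whose isomorphism classes form a group $\Gamma$ with the Kummer property, I would invoke Theorem~\ref{sum-inv}(ii): since $(A,\sigma)$ decomposes as a sum of invertibles in $\Z(\C)$, the functor $F$ is normal and the resulting exact sequence $\Gamma\mbox{-}\vect \toto \C \overset{F}\toto \D$ is \emph{central}. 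By Theorem~\ref{cocom-central} the associated normal Hopf monad $T$ is cocommutative, and since $\D$ is a fusion category (or at worst we use perfectness) $T$ is exact; hence Theorem~\ref{equiv-monad-general} gives that $F$ is an equivariantization under the finite group scheme $G = \Spec H$, where $H$ is the induced Hopf algebra. The key point then is to identify $H$: by Lemma~\ref{normal-algebra} and Proposition~\ref{tannakian}, $H$ is a commutative Hopf algebra with $\KER_F \simeq \CoRep H \simeq \Gamma\mbox{-}\vect$, so $H \cong \kk^\Gamma = \OO(\Gamma)$ as a Hopf algebra. The Kummer property then guarantees $\Spec H$ is the discrete group $\hat{\Gamma}_\kk$, which is the source of the duality statement.

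Next I would handle (i) $\implies$ (ii). Suppose $F$ is an equivariantization under a finite abelian group $G$ with the Kummer property. By Example~\ref{equiv-nec} the exact sequence is central with $\KER_F \simeq \rep G$ and induced central algebra $A = \kk^G = \OO(G)$. The Kummer property for $G$ means $\kk$ splits $G$, so $\rep G \simeq \hat{G}_\kk\mbox{-}\vect$ is pointed: every simple object of $\rep G$ is invertible (one-dimensional). Therefore $A$, viewed in $\C$ through $i$, decomposes as a direct sum of invertible objects, and crucially, since the exact sequence is central, the canonical lifting $\tilde{i}$ carries these invertibles into $\Z(\C)$; thus $(A,\sigma)$ itself decomposes as a direct sum of invertible objects of $\Z(\C)$. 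Their isomorphism classes form the group $\Gamma$, which by the identification $\KER_F \simeq \Gamma\mbox{-}\vect \simeq \rep G$ is isomorphic to $\hat{G}_\kk$; since $G$ has the Kummer property so does $\hat{G}_\kk \cong \Gamma$, giving (ii) together with the duality.

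For the duality assertion $G = \hat{\Gamma}_\kk$ I would argue once and for all on the level of Hopf algebras. In either direction we have $\KER_F \simeq \CoRep H$ with $H$ commutative and cosemisimple, and the pointedness of $\KER_F$ (equivalently, $A$ being a sum of invertibles) forces $H \cong \OO(\Gamma)$, the algebra of functions on the discrete group $\Gamma$. Then $G = \Spec H = \Spec \OO(\Gamma)$, and the Kummer property ensures this spectrum is the \emph{constant} group scheme associated to the character group $\hat{\Gamma}_\kk$, so $G = \hat{\Gamma}_\kk$; dually $\Gamma = \hat{G}_\kk$. The main obstacle I anticipate is precisely this bookkeeping over a general field $\kk$: without the Kummer hypothesis the group scheme $\Spec H$ need not be discrete, and the dictionary between ``$A$ is a sum of invertibles in $\Z(\C)$'' and ``$H \cong \OO(\Gamma)$ with $\Gamma$ satisfying Kummer'' must be set up carefully so that the discreteness of $G$ (needed to apply Proposition~\ref{equiv-ss} rather than only the group-scheme Theorem~\ref{equiv-monad-general}) is genuinely equivalent to the Kummer property on $\Gamma$. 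I expect the invertibility-versus-pointedness translation and the compatibility of the half-braidings $\sigma_i$ with the group structure on $\Gamma$ to be the technically delicate steps, whereas the centrality and cocommutativity machinery does all the conceptual work.
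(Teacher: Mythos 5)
Your implication (i) $\Rightarrow$ (ii) is essentially the paper's own argument: Example \ref{equiv-nec} gives centrality with $\tilde{i}(A)=(A,\sigma)$, the Kummer property makes $\rep G$ pointed, and the canonical lifting transports the decomposition of $A$ into invertibles up to $\Z(\C)$. The genuine gap is in (ii) $\Rightarrow$ (i). Your main line there is: Theorem \ref{sum-inv} gives a central exact sequence, Theorem \ref{cocom-central} gives cocommutativity of $T$, and then Theorem \ref{equiv-monad-general} gives the equivariantization. But Theorem \ref{equiv-monad-general} requires the normal Hopf monad $T$ to be \emph{exact}, and you justify this by ``since $\D$ is a fusion category (or at worst we use perfectness).'' Neither is available: the proposition concerns arbitrary finite tensor categories over an arbitrary field, $\D$ is not assumed semisimple, and perfectness of $F$ is not a hypothesis. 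The paper avoids this by invoking Proposition \ref{equiv-ss} instead, whose hypotheses are only that $T$ is normal cocommutative and that the induced Hopf algebra $H$ is \emph{split semisimple} --- there, exactness is a conclusion, not an assumption. The missing work, which you flag as an ``anticipated obstacle'' but never carry out, is exactly the proof that $H$ is split semisimple: $H$ is commutative by Proposition \ref{tannakian}; $\KER_F \simeq \bra{\A}$ is pointed with Picard group $\Gamma$, so $H \cong \kk\Gamma$ is cocommutative and split cosemisimple; and the Kummer property then forces $G=\Spec(\kk\Gamma)=\hat{\Gamma}_\kk$ to be a discrete group of order $\dim H$, whence $H$ is split semisimple and Proposition \ref{equiv-ss} applies.

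A second, related error concerns the identification of $H$, and it matters for the duality statement. Since $\KER_F \simeq \CoRep H$ is pointed with group $\Gamma$, the canonical identification is $H \cong \kk\Gamma$ (the group algebra: a $\Gamma$-graded vector space is the same thing as a $\kk\Gamma$-comodule), not $H \cong \kk^\Gamma = \OO(\Gamma)$ as you write. If $H$ really were $\OO(\Gamma)$, then $\Spec H$ would be the constant group scheme $\Gamma$ itself, not $\hat{\Gamma}_\kk$; your two assertions contradict each other, and they only reconcile through the non-canonical isomorphism $\kk\Gamma \cong \OO(\Gamma)$ that the Kummer property happens to provide. The canonical duality $G=\hat{\Gamma}_\kk$ asserted in the proposition comes from $G=\Spec(\kk\Gamma)$, which by Cartier duality is the character group scheme of $\Gamma$, discrete precisely because of the Kummer hypothesis; routing through an abstract isomorphism $H\cong\OO(\Gamma)$ would only yield $G\cong\Gamma$ non-canonically.
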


\begin{proof}
(i) $\implies$ (ii). Assume that $F$ is an equivariantization under a finite abelian group $G$ having the Kummer property. We have a tensor action of $G$ on $\D$, and we may assume that $\C= \D^G$. The exact sequence
$$\rep G \overset{i}\toto  \C \toto \D$$ is central,
that is, $i$ admits a central lifting  $\tilde{i}: \rep G \to
\Z(\C)$ such that $\tilde{i}(A)=(A,\sigma) = \A$ (see
Example~\ref{equiv-nec}). Now, since $G$ is abelian and has the
Kummer property, $\Rep G \simeq  \Gamma\ti \vect$ is
pointed, so $A$ splits as a sum of invertible objects, and so does
$\A = \tilde{i}(A)$, so (ii) holds.

(ii) $\implies$ (i).  Assume that $\A$ splits as a direct sum of
invertible objects of $\Z(\C)$.  Then by Theorem~\ref{sum-inv},
$F$ is normal and fits into a central exact sequence
$$\C' \toto \C \toto \D.$$
Denote by $H$ the induced Hopf algebra of this exact sequence,
which is commutative by Proposition~\ref{tannakian}. The tensor
category $\C'$ is pointed with Picard group $\Gamma$ because it is
tensor equivalent to $\bra{\A}$ via the canonical lifting
$\tilde{i}$, and since $\C' = \CoRep H$, we see that $H$ is
cocommutative and split cosemisimple. Thus $G= \Spec H =
\hat{\Gamma}_\kk$ is a discrete abelian group, so $H$ is split
semisimple.  We conclude by Proposition~\ref{equiv-ss} that $F$ is
an equivariantization under the group $G$.
\end{proof}

\section{Equivariantization and the double of a Hopf monad}\label{proof-main}

\subsection{Relative centers and centralizers}

Let $\C$, $\D$ be monoidal categories, and let $F: \C \to \D$ be
 a  comonoidal functor. Define a half-braiding relative to
$F$ to be a pair $(d,\sigma)$, where $d$ is an object of $\D$ and
$\sigma$ is a natural transformation $d \otimes F \to F \otimes d$
satisfying:
\begin{eqnarray*}
&(F_2(c,c') \otimes d)\sigma_{c \otimes c'}&= (F(c) \otimes  \sigma_{c'})(\sigma_c \otimes F(c'))(d \otimes F_2(c,c')),\\
&(F_0 \otimes d)\sigma_{\un}& = \sigma_\un (d \otimes
F_0).
 \end{eqnarray*}
Half-braidings relative to $F$ form a category called the
\emph{center of $\D$ relative to $F$}  and denoted by $\Z_F(\D)$,
or $\Z_\C(\D)$ if the functor $F$ is clear from the context. It is
monoidal, with the tensor product defined by $$(d,\sigma) \otimes
(d',\sigma') = (d \otimes d',  (\sigma \otimes d')(d \otimes
\sigma')),$$ and the forgetful functor $\U: \Z_{F}(\D) \to \D$
is monoidal strict.

Now assume $F$ is strong monoidal (in particular, it can be viewed as a comonoidal functor).
Then we have a strong monoidal functor $\tilde{F}: \Z(\C) \to \Z_{F}(\D)$, defined by $\tilde{F}(c,\sigma) = (F(c),\tilde{\sigma})$, where $\tilde{\sigma} = F(\sigma)$ up to the structure isomorphisms of $F$.

If $F$ is strong monoidal and has a left adjoint $L$, then $T=FL$
is a bimonad on $\D$ and by adjunction,  $\Z_F(\D)$ is isomorphic
as a monoidal category to the center $\Z_T(\D)$ of $\D$ relative
to the bimonad $T$ defined in \cite[Section  5.5]{BV}.

Let $(\E)=\bigl(\C' \overset{i}\toto \C \overset{F}\toto \C''\bigr)$ be an exact sequence of finite tensor categories over a field $\kk$.
We will show that, if $(\E)$ is central, with canonical central lifting $\tilde{i}$, then we have an exact sequence of tensor categories
$$\C' \overset{\tilde{i}}\toto \Z(\C) \overset{\tilde{F}}\toto \Z_F(\C'')$$ and a morphism of exact sequences of tensor categories
$$\xymatrix{
\C' \ar[r] \ar[d]_{=} & \Z(\C)\ar[r]\ar[d] & \Z_F(\C'')\ar[d]\\
\C'\ar[r] & \C\ar[r] & \C''.
}$$

\subsection{Quantum double of a Hopf monad}

Let $T$ be a Hopf monad on a rigid monoidal category $\C$. We say that $T$ is \emph{centralizable} (\cite{BV}) if for all objects $X$ in $\C$ the coend
$$Z_T(X) = \int^{Y \in \C} \ldual{ TY} \otimes X \otimes Y$$
exists. In that case the assignment $X \to Z_T(X)$ defines a Hopf
monad $Z_T$ on $\C$,  called the \emph{centralizer} of $T$.
Denoting by $j_{X,Y}: \ldual{TY} \otimes X \otimes Y \to Z_T(X)$
the universal dinatural transformation (in $Y$) associated with
the coend $Z_T(X)$, set $$\partial_{X,Y} = (TX \otimes j_{X,Y})
(\lcoev_{ TY} \otimes X \otimes Y): X \otimes Y \to TY
\otimes Z_T(X).$$ If $T$ is centralizable, we have an isomorphism
of tensor categories $\Z_T(\C) \iso \C^{Z_T}$, and so, an
isomorphism of tensor categories $K : \Z_{U_T}(\C) \iso \C^{Z_T}$.

Moreover, if $T$ is centralizable there also exists a canonical comonoidal distributive law $\Omega: T Z_T  \to Z_T T$, which is an isomorphism.
It is characterized by the following equation
\begin{equation}\label{omega}(\mu_X \otimes \Omega_Y)T_2(TY,Z_T(X))T(\partial_{X,Y}) =(\mu_X \otimes Z_T T(Y)) \partial_{TX,TY} T_2(X,Y).
\end{equation}
This invertible distributive law serves two  purposes:  it
defines (via its inverse) a lift $\tilde{T}$ of the Hopf monad $T$
to $\C^{Z_T}$, and it also defines a structure of a Hopf monad
$D_T = Z_T \circ_\Omega T$ on the endofunctor $Z_T T$ of $\C$. The
Hopf monad $D_T$ is called the double of $T$; it is
quasi-triangular, so that $\C^{D_T}$ is braided, and we have a
canonical braided isomorphism $K': \C^{D_T} \iso \Z(\C^T)$.

Lastly, we have a commutative diagram of tensor functors
$$\xymatrix{
\Z(\C^T) \ar[d]^{\tilde{U}_T} \ar[r]^{K'}_{\sim} &\C^{D_T} \ar[r]^{U_{D_T}} \ar[d]^{\tilde{U}}& \C^T \ar[d]^{U_T}\\
 \Z_{\tilde{U}_T}(\C)\ar[r]^{\sim}_{K} &\C^{Z_T}\ar[r]_{U_{Z_T}}& \C
}$$

If $\C$ is a finite tensor category over a field  $\kk$, and $T$ a
\kt linear right exact  Hopf monad on $\C$, then $\C^T$ is a
finite tensor category and the forgetful functor $U_T : \C^T \to
\C$ is a tensor functor. Moreover, $T$ is centralizable and $Z_T$
is a \kt linear  Hopf monad on $\C$, which is  right exact (being
an inductive limit of right exact functors) so $\Z_T(\C)\simeq
\C^{Z_T}$ is a finite tensor category and the forgetful functor
$\Z_T(\C) \to \C$ is a tensor functor.

%

\subsection{Proof of Theorem~\ref{cocom-central}}\label{proof-cocom-central}

Let $(\E)$ be an exact sequence of finite tensor categories over a field $\kk$.
Up to equivalence, we may assume that $(\E)$ is of the form
$$\bra{A} \toto \C^T \overset{U_T}\toto \C,$$
where $\C$ is a finite tensor category, $T$ is a \kt linear right exact normal Hopf monad on $\C$, and $A$ is the induced algebra of $U_T$.

We are to show that the following assertions are
equivalent:
\begin{enumerate}[(i)]
\item $T$ is cocommutative;
\item $(\E)$ is a central exact sequence.
\end{enumerate}
 Our  proof will rely on the following
\begin{lemma}\label{central-induced}
Let $T$ be a centralizable Hopf monad on an rigid category $\C$. Then
 the induced central algebra  (resp. coalgebra) of $U_T$ is the induced algebra (resp. coalgebra) of $\tilde{U}_T$.
\end{lemma}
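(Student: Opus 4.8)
The plan is to prove the statement for the induced algebra and to deduce the statement for the induced coalgebra by duality, using that for any tensor functor $G$ admitting adjoints one has $R_G(\un) = \rdual{L_G(\un)}$ and that the forgetful functors involved are strong monoidal, hence preserve duals. Write $F = U_T : \C^T \to \C$, with left and right adjoints $L$ and $R$, induced central algebra $(A,\sigma) = (R(\un),\sigma) \in \Z(\C^T)$ and induced central coalgebra $(\hat C,\hat\sigma) = (L(\un),\hat\sigma)$. Denote by $\U : \Z(\C^T) \to \C^T$ and $\U' : \Z_F(\C) \to \C$ the two forgetful functors; they are strict monoidal and fit into the commuting square $F\,\U = \U'\,\tilde F$, where $\tilde F = \tilde U_T : \Z(\C^T) \to \Z_F(\C)$. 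Since $\C$ is finite, both $\Z(\C^T)$ and $\Z_F(\C) \simeq \Z_T(\C) \simeq \C^{Z_T}$ are finite tensor categories, so $\tilde F$ admits adjoints and its induced algebra $R_{\tilde F}(\un)$ is well defined.

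The core step is to exhibit $(A,\sigma)$ as a representing object for the functor $(c,\gamma) \mapsto \Hom_{\Z_F(\C)}\big(\tilde F(c,\gamma),\un\big)$ on $\Z(\C^T)$; by definition this identifies $(A,\sigma)$ with $R_{\tilde F}(\un)$, the induced algebra of $\tilde F$. Fix $(c,\gamma) \in \Z(\C^T)$. By construction $\tilde F(c,\gamma) = (F(c),\tilde\gamma)$, where $\tilde\gamma$ is the half-braiding relative to $F$ obtained by transporting $\gamma$ through the monoidal structure of $F$. A morphism $\tilde F(c,\gamma) \to \un$ in $\Z_F(\C)$ is a morphism $f : F(c) \to \un$ in $\C$ compatible with $\tilde\gamma$ and the trivial half-braiding of $\un$. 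Under the adjunction isomorphism $\Hom_\C(F(c),\un) \cong \Hom_{\C^T}(c,R(\un)) = \Hom_{\C^T}(c,A)$, the morphism $f$ corresponds to a morphism $g : c \to A$.

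What remains is to check that the relative-center compatibility satisfied by $f$ is equivalent, under this bijection, to the condition that $g$ be a morphism $(c,\gamma) \to (A,\sigma)$ in $\Z(\C^T)$, that is $\sigma_x(g \otimes x) = (x \otimes g)\gamma_x$ for all $x$. I would verify this by feeding the two compatibility squares through the unit and counit of the adjunction $(F,R)$ and invoking the defining property of the canonical half-braiding $\sigma$ of $A = R(\un)$, which is precisely the one making this translation hold (the construction recalled from \cite[Section 6]{tensor-exact} and \cite{BLV}). A Yoneda argument then yields $R_{\tilde F}(\un) \cong (A,\sigma)$ in $\Z(\C^T)$; the algebra structures agree as well, since both arise from $\un$ being the unit object via the lax monoidal structures of $R$ and $R_{\tilde F}$, and the adjunction bijections used above are monoidal.

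The coalgebra case then follows formally: combining the algebra statement with $L_{\tilde F}(\un) = \ldual{(R_{\tilde F}(\un))}$ and $(\hat C,\hat\sigma) = \ldual{(A,\sigma)}$, where left duals are taken in $\Z(\C^T)$ and $\U$ preserves them, gives $L_{\tilde F}(\un) \cong (\hat C,\hat\sigma)$, identifying the induced coalgebra of $\tilde F$ with the induced central coalgebra of $F$. The hard part will be the compatibility verification of the third paragraph: matching the relative-center condition on $f$ with the center condition on $g$ requires unwinding the precise definitions of $\tilde\gamma$ and of the canonical half-braiding $\sigma$, together with a careful diagram chase through the adjunction $(F,R)$. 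The rest of the argument is formal.
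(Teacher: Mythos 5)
Your route is genuinely different from the paper's, and it can be completed, but as submitted it has a gap exactly at the step you yourself flag as ``the hard part'', and the justification you offer for that step is inaccurate. Everything in your argument outside the third paragraph is indeed formal, so the entire content of the lemma is concentrated in the claim that, under the adjunction bijection $\Hom_\C(F(c),\un)\cong\Hom_{\C^T}(c,A)$, the relative-center condition on $f$ corresponds to the center condition on $g$. You assert this holds because the defining property of $\sigma$ ``is precisely the one making this translation hold''. It is not: in \cite{BLV} and in the paper, $\sigma$ (equivalently, dually, $\hat\sigma$) is constructed from the Hopf monad structure --- $\hat{\sigma}_{(M,r)}=\Phi^l_{(M,r)}(\Phi^r_{(M,r)})^{-1}$ in terms of the fusion operators, see Lemma~\ref{def-cocom} --- not by any universal property in the (relative) center, so the correspondence is a statement requiring proof. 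Two ingredients supply it. (i) The counit $\eps_\un\colon F(A)\to\un$ is itself a morphism $\tilde{F}(A,\sigma)\to(\un,\mathrm{triv})$ in $\Z_F(\C)$, i.e.\ $(F(x)\otimes\eps_\un)F(\sigma_x)=\eps_\un\otimes F(x)$; dually this says $\hat\sigma_{(M,r)}(\eta_\un\otimes M)=M\otimes\eta_\un$, which follows in two lines from the fusion-operator description, since $\Phi^r_{(M,r)}\eta_M=\eta_\un\otimes M$ and $\Phi^l_{(M,r)}\eta_M=M\otimes\eta_\un$. Composing with $\eps_\un$ then gives one direction. (ii) For the converse you must compare two morphisms $c\otimes x\to x\otimes A$ in $\C^T$, not morphisms into $A$; here you need the Hopf-adjunction isomorphism $x\otimes A=x\otimes R(\un)\iso R(F(x))$ (the one invoked in the proof of Lemma~\ref{exact-dominant}), under which the adjunction bijection becomes $h\mapsto (F(x)\otimes\eps_\un)F(h)$; injectivity of this map, together with (i) and your hypothesis on $f$, forces the center condition on $g$. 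Neither point is in your text, and (i) in particular is a property of $\sigma$ that must be derived, not its definition.

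For comparison, the paper argues in the opposite order (coalgebras first, algebras by duality) and entirely inside the double machinery: it transports the problem along $\Z(\C^T)\iso\C^{D_T}$ and $\Z_{U_T}(\C)\iso\C^{Z_T}$, computes the left adjoint of $\tilde{U}$ via the lift of $T$ along the inverse distributive law $\Omega^{-1}$, so that the induced coalgebra of $\tilde{U}_T$ comes out as $((T\un,\mu_\un),\Sigma)$ with $\Sigma$ explicit in terms of the dinatural transformation $\partial$ and $\Omega$, and then verifies $\Sigma=\hat\sigma$ by a direct computation using the defining equation of $\Omega$. That proof works in the lemma's stated generality (any centralizable Hopf monad on a rigid category), the needed left adjoint coming from monadicity of $\tilde{U}$. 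Your finiteness hypothesis, by contrast, is both absent from the statement and unnecessary: representability of $(c,\gamma)\mapsto\Hom_{\Z_F(\C)}(\tilde{F}(c,\gamma),\un)$ is exactly the existence of the induced algebra, so your argument, once (i) and (ii) are supplied, proves the lemma as stated while never mentioning $Z_T$, $D_T$, $\Omega$ or $\partial$ --- arguably a cleaner proof than the paper's. But as it stands, the crux is asserted rather than proved.
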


Before we prove this lemma, let us show how it enables us to
conclude. Consider the tensor functor $\tilde{U}_T: \Z(\C) \to
\Z_T(\C)$. It is dominant.  Indeed $U_T$ is dominant by
assumption,   which means that the unit of $T$ is a
monomorphism, and so is the unit of $\tilde{T}$ because it is a
lift of $T$.

Moreover, $\tilde{U}_T$ is normal if and only if $T$ is cocommutative. This can be seen as follows.
Denote by $(\hat{C}, \hat{\sigma})$ the induced central coalgebra of $U_T : \C_T \to \C$, which is also the  induced coalgebra of $\tilde{U}_T$ by Lemma~\ref{central-induced}. We have $\tilde{T}(\un) = \tilde{U}_T(\hat{C},\hat{\sigma})$, and also $T(\un)= \hat{C}$.
In particular, $\tilde{U}_T$ is normal if and only if $(\hat{C},\hat{\sigma})$ is trivial in $\Z_T(\C)$, that is $\hat{C}$ is trivial in $\C$ (which is true because we have assumed  $T$ is normal) and $\hat{\sigma}$ coincides with the trivial half-braiding. The latter condition means that $T$ is cocommutative by Lemma~\ref{def-cocom}.

Denote by $\A=(A,\sigma)$ the induced central algebra of $U_T$, which is the right dual of $(\hat{C},\hat{\sigma})$. It is also the induced algebra of $\tilde{U}_T$.

Now assume $T$ is cocommutative. As we have just seen this means that $\tilde{U}_T$ is normal and dominant, so we have an exact sequence of tensor categories
$$(\E_0)\qquad \bra{\A} \toto \Z(\C) \overset{\tilde{U}_T}\toto \Z_{T}(\C).$$
Moreover,  we  have a morphism of exact sequences of
tensor categories
$$\xymatrix{
\bra{\A} \ar[r] \ar[d]_{V} & \Z(\C^T) \ar[r] \ar[d]_U & \Z_T(\C)\ar[d]_{W}\\
\bra{A} \ar[r] & \C^T \ar[r] & \C
}
$$
where $U$, $V$, $W$ denote the forgetful functors. We have $U(\A) = A$, so $V$ is an equivalence of categories, that is, $(\E)$ is central.

Conversely, assume $(\E)$ is central. That means that the forgetful functor induces a tensor equivalence $\bra{\A} \to \bra{A}$. Since $A$ is self-trivializing, so is $\A$. But by Lemma~\ref{central-induced}, $\A$ is also the induced algebra of $\tilde{U}_T$, so this tensor functor is normal by Lemma~\ref{normal-algebra}; and as we have seen above,
this implies that $T$ is cocommutative. Thus, we have shown the equivalence of  (i) and (ii).
\begin{proof}[Proof of Lemma~\ref{central-induced}]
The induced  (central) algebra being the dual of the  induced (central) coalgebra, it is enough to prove the assertion for coalgebras.
Let $\tilde{L}$ denote the left adjoint of $\tilde{U}$. The induced coalgebra $\tilde{C}$ of $\tilde{U}_T$ is $K^{-1}\tilde{L}(\un)$.

The functor $\tilde{U}$ is monadic, and its monad $\tilde{T}$ is the lift of $T$ defined by the distributive law $\Omega^{-1}$. This means that we have
$\tilde{L}(c,r)= (T(c),T(r) \Omega^{-1}_c)$ for $(c,r)$ in $\C^{Z_T}$.
We also have $K^{-1}(c,\rho) = ((c,r),s)$, where $r=\rho {\eta_{Z_T}}_{Tc}$ and $s$ is the half-braiding defined by
$s_{(x,r)}=(r \otimes \rho Z_T(\eta_c))\partial_{c,x}$ for $(x,r)$ in $\C^T$.
As a result,
$\tilde{C}= ((T\un,\mu_\un),\Sigma)$, where $$\Sigma_{(c,r)}= (r \otimes T((Z_T)_0) \Omega^{-1}_\un)\partial_{T\un,c}.$$
On the other hand, we have $\hat{C}=(T(\un),\mu_\un)$, and the half-braiding $\tilde{\sigma}$ is characterized by the fact that the following diagram commutes:
$$\xymatrix@!0 @R=1.4cm @C=2.8cm{
T\un \otimes c \ar[rr]^{\hat{\sigma}_{(c,r)}}& & c \otimes T\un\\
T\un \otimes Tc \ar[u]^{T\un \otimes r} &Tc \ar[r]_{T_2(c,\un)}\ar[ur]^\simeq \ar[l]^{T_2(\un,c)}\ar[ul]_\simeq& Tc \otimes T\un \ar[u]_{r \otimes T\un} }$$
so it is enough to verify: $\Sigma_{(c,r)}(T\un \otimes
r)T_2(\un,c)=(r\otimes T\un)T_2(\un,c)$. Now we have
\begin{align*}
\Sigma_{(c,r)}&(T\un \otimes r)T_2(\un,c)=(r \otimes T((Z_T)_0) \Omega^{-1}_\un)\partial_{T\un,c}(T\un \otimes r)T_2(\un,c)\\
&=(rT(r) \otimes T((Z_T)_0) \Omega^{-1}_\un)\partial_{T\un,Tc}T_2(\un,c)\quad\mbox{by functoriality of $\partial$}\\
&=(r\mu_c \otimes T((Z_T)_0) \Omega^{-1}_\un)\partial_{T\un,Tc}T_2(\un,c)\\
&=(r\mu_c \otimes T((Z_T)_0))T_2(Tc,Z_T\un)T(\partial_{\un,c})\quad\mbox{by \eqref{omega}}\\
&=(r T(r) \otimes T((Z_T)_0))T_2(Tc,Z_T\un)T(\partial_{\un,c})\\
&=(r \otimes T\un)T_2(c,\un)T((r \otimes (Z_T)_0)\partial_{\un,c})\quad\mbox{by functoriality of $T_2$}\\
&=(r \otimes T\un)T_2(c,\un)T(r\eta_c)\quad\mbox{by construction of $(Z_T)_0$}\\
&=(r \otimes T\un)T_2(c,\un).
\end{align*}
This concludes the proof of the lemma.\end{proof}

\section{Tensor functors of small Frobenius-Perron index}\label{demo}

In this section we prove Theorems \ref{index-2} and \ref{index-p}.

\subsection{Tensor functors of Frobenius-Perron index 2}

\begin{theorem}\label{index-2} Let $F: \C \to \D$ be a dominant tensor functor
between fusion categories over a field of characteristic $0$. If
$\FPind(\C: \D) = 2$, then $F$ is an equivariantization associated
with an action of $\Zed_2$ on $\D$.
\end{theorem}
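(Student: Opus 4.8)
The plan is to deduce the result from the abelian equivariantization criterion, Proposition~\ref{equiv-abelian}, by pinning down the structure of the induced central algebra $\A=(A,\sigma)$ of $F$ inside $\Z(\C)$. First I would record the two facts I need about $A$. By Lemma~\ref{fpdim-A}(i) we have $\FPdim A=\FPind(\C:\D)=2$. Moreover $A$ is \emph{connected}, $\Hom_\C(\un,A)\simeq\kk$, and since the forgetful functor $\U:\Z(\C)\to\C$ is faithful while the algebra unit $\un\to A$ is a morphism in $\Z(\C)$, connectedness survives in the center: $\Hom_{\Z(\C)}(\un,\A)\simeq\kk$.

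The heart of the argument is to show that $\A$ decomposes in $\Z(\C)$ as a direct sum of invertible objects. Here the key point is that $\Z(\C)$ is itself a (semisimple) fusion category and that $\U$ preserves Frobenius--Perron dimension, so $\FPdim\A=2$ in $\Z(\C)$ as well. The unit realizes $\un$ as a simple subobject of $\A$, which therefore splits off in the semisimple category $\Z(\C)$: one gets $\A\simeq\un\oplus W$ with $\FPdim W=2-1=1$. An object of Frobenius--Perron dimension $1$ in a fusion category is invertible, so $W$ is invertible in $\Z(\C)$; and $W\not\simeq\un$, since otherwise $\Hom_{\Z(\C)}(\un,\A)$ would be at least two-dimensional, contradicting connectedness. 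Thus $\A$ is a direct sum of the invertible objects $\un$ and $W$, whose classes form a group $\Gamma$ of order two, so $\Gamma\simeq\Zed_2$ (concretely $W\otimes W\simeq\un$, because $W\otimes W$ is invertible with $\U(W\otimes W)\simeq\un$, and $\un$ is the only invertible of $\Z(\C)$ lying over $\un$).

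To conclude, I would note that over a field of characteristic $0$ the group $\Zed_2$ has the Kummer property, since $1\neq-1$ furnishes two distinct square roots of $1$. Hence $\A$ satisfies hypothesis (ii) of Proposition~\ref{equiv-abelian}, and that proposition yields that $F$ is an equivariantization for an action on $\D$ of the finite abelian group $G=\hat{\Gamma}_\kk$; as $\Gamma\simeq\Zed_2$ is self-dual, $G\simeq\Zed_2$, which is exactly the assertion. Alternatively, once $\A$ is known to split into invertibles one may invoke Theorem~\ref{sum-inv}(ii) to see directly that $\Zed_2\ti\vect\to\C\to\D$ is a central exact sequence, whose induced Hopf algebra $\kk^{\Zed_2}$ is split semisimple in characteristic $0$, and then apply Theorem~\ref{equiv-monad-general} (or Proposition~\ref{equiv-ss}).

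The main obstacle is the middle paragraph: upgrading the easy splitting $A\simeq\un\oplus g$ in $\C$ (forced by $\FPdim A=2$ and $m_\un(A)=1$) to a splitting of the half-braided object $\A$ into invertibles in the center. What makes this automatic--without computing $\sigma$ explicitly--is that the canonical copy of $\un$ splits off in the semisimple category $\Z(\C)$ and its complement is compelled by Frobenius--Perron dimension to be a single invertible object.
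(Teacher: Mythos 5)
Your proposal is correct and follows essentially the same route as the paper's proof: $\FPdim\A=\FPdim A=2$ by Lemma~\ref{fpdim-A}, the splitting $\A=\un\oplus S$ with $S$ invertible in $\Z(\C)$, and the conclusion via Proposition~\ref{equiv-abelian} (the paper first cites normality of index-$2$ functors from \cite{tensor-exact}, but as your argument shows this step is subsumed by Proposition~\ref{equiv-abelian} and Theorem~\ref{sum-inv}). One parenthetical justification you give is false, though not load-bearing: it is not true in general that $\un$ is the only invertible object of $\Z(\C)$ lying over $\un$ --- such objects are pairs $(\un,\sigma)$, i.e.\ monoidal automorphisms of $\id_\C$, and these can be nontrivial (for instance $\Z(\rep G)$ with $G$ a group with nontrivial center). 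The fact you wanted, $W\otimes W\simeq \un$ (so that the classes $\{[\un],[W]\}$ form a group of order $2$), follows instead from the multiplication of $\A$, which embeds the simple object $W\otimes W$ into $\un\oplus W$ exactly as in the proof of Theorem~\ref{sum-inv}(ii); since $W\otimes W\simeq W$ would force $W\simeq\un$, contradicting $\dim\Hom_{\Z(\C)}(\un,\A)=1$, one gets $W\otimes W\simeq\un$.
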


\begin{proof}
Let $F: \C \to \D$ be a dominant tensor functor of Frobenius-Perron index $2$ between fusion categories.
By \cite[Proposition 4.13]{tensor-exact}, $F$ is normal and so
it induces an exact sequence of fusion categories
\begin{equation}\label{z2}\rep \mathbb \Zed_2 \to
\C \overset{F}\to \D.\end{equation}

Now let $\A=(A, \sigma)$ be the induced central algebra of $F$. We
have $\FPdim \A = \FPdim A = 2$ by Lemma~\ref{fpdim-A}. Since $\A$
contains the unit object, we have $\A = \un \oplus S$, with $S$
invertible. By  Proposition~\ref{equiv-abelian}, $F$ is an
equivariantization relative to an action of $\Zed_2$. This
concludes the proof of the theorem.
\end{proof}

\subsection{Tensor functors of small prime Frobenius-Perron index}\label{index-p}

\begin{theorem}\label{index-p} Let $F: \C \to \D$ be a dominant tensor functor
between fusion categories over a field of characteristic $0$.
Assume that $\FPdim\, \C$ is a natural integer, and that
$\FPind(\C: \D)$ is the smallest prime number dividing
$\FPdim\,\C$. Then $F$ is an equivariantization associated with an
action of $\Zed_p$ on $\D$.
\end{theorem}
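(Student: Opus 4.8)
The plan is to mirror the proof of Theorem~\ref{index-2}, reducing everything to Proposition~\ref{equiv-abelian}. Concretely, I would show that the induced central algebra $\A=(A,\sigma)$ of $F$ decomposes, \emph{inside $\Z(\C)$}, as a direct sum of $p$ invertible objects whose isomorphism classes form a group $\Gamma$; since $|\Gamma|=p$ this group is automatically cyclic, $\Gamma\cong\Zed_p$. Once this splitting is in hand, Theorem~\ref{sum-inv} gives that $F$ is normal and that the exact sequence $\Gamma\ti\vect\to\C\overset{F}\to\D$ is central, and Proposition~\ref{equiv-abelian} identifies $F$ with an equivariantization under $\hat\Gamma_\kk\cong\Zed_p$.

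First I would record the numerical data. By Lemma~\ref{fpdim-A}(i) we have $\FPdim A=\FPind(\C:\D)=p$, and $m_\un(A)=m_\un(F(\un))=1$; as the forgetful functor $\U$ is faithful and sends the unit of $\Z(\C)$ to $\un$, the algebra $\A$ is \emph{connected}, i.e.\ $\Hom_{\Z(\C)}(\un,\A)=\kk$, so $\un$ occurs in $\A$ with multiplicity one. Because $\FPdim\C$ is a natural integer, $\Z(\C)$ is again weakly integral with $\FPdim\Z(\C)=(\FPdim\C)^2$ by \cite[Proposition 8.12]{ENO}, and its Frobenius--Perron dimension has the same smallest prime factor $p$ as $\FPdim\C$.

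The heart of the argument, which I expect to be the main obstacle, is a counting argument carried out in $\Z(\C)$. Write $\A=\un\oplus\mathcal{B}$ in $\Z(\C)$, so $\FPdim\mathcal{B}=p-1\in\Zed$, and let $\mathcal{B}=\bigoplus_j\mathcal{S}_j^{\,n_j}$ be its decomposition into simple objects. By \cite[Proposition 8.27]{ENO} each $\FPdim\mathcal{S}_j=\sqrt{m_j}$ with $m_j$ a positive integer. Since the square roots $\{\sqrt d:d>1\text{ squarefree}\}$ are linearly independent over $\mathbb{Q}$ and all the $\FPdim\mathcal{S}_j$ are positive, the rational equality $\sum_j n_j\sqrt{m_j}=p-1$ forces every $m_j$ to be a perfect square, so each $\FPdim\mathcal{S}_j\in\Zed$. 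Now if some $\mathcal{S}_j$ were non-invertible, then $\FPdim\mathcal{S}_j$ would be an integer $>1$ dividing $\FPdim\Z(\C)$ (by \cite[Corollary 8.11]{ENO}), hence $\ge p$; but $\FPdim\mathcal{S}_j\le\FPdim\mathcal{B}=p-1<p$, a contradiction. Therefore every $\mathcal{S}_j$ is invertible, and $\A$ is a direct sum of $p$ invertible objects of $\Z(\C)$. It is exactly here that weak integrality (square-root dimensions) and the minimality of $p$ among the prime factors conspire to force the complement $\mathcal{B}$ to be pointed.

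To conclude I would invoke the machinery already in place. Applying $\U$ shows $A=\U(\A)$ is a sum of invertibles of $\C$, so by Theorem~\ref{sum-inv}(i) the functor $F$ is normal, the invertible factors form a group $\Gamma$ with $|\Gamma|=\FPdim A=p$, hence $\Gamma\cong\Zed_p$; by part~(ii) the exact sequence $\Gamma\ti\vect\to\C\overset{F}\to\D$ is central. Since $\Gamma\cong\Zed_p$ has the Kummer property over a field of characteristic $0$ containing the $p$-th roots of unity, Proposition~\ref{equiv-abelian} yields that $F$ is an equivariantization associated with an action of $\hat\Gamma_\kk\cong\Zed_p$ on $\D$. (For a general field of characteristic $0$ the same input gives, through Theorem~\ref{cocom-central} and Theorem~\ref{equiv-monad-general}, an equivariantization under the order-$p$ group scheme $\Spec H=\mu_p$, the functor $F$ being perfect because $\D$ is fusion.) The only genuinely new ingredient beyond Theorem~\ref{index-2} is the arithmetic of the preceding paragraph.
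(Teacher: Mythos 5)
Your overall strategy coincides with the paper's: decompose the induced central algebra $\A=(A,\sigma)$ into simple objects inside $\Z(\C)$, show every summand is invertible by a dimension--divisibility argument together with the minimality of $p$, and conclude with Theorem~\ref{sum-inv} and the equivariantization criteria. There is, however, a genuine gap at the crucial step. You assert that a non-invertible simple $\mathcal{S}_j$ of $\Z(\C)$ with integer Frobenius--Perron dimension would have $\FPdim\mathcal{S}_j>1$ \emph{dividing} $\FPdim\Z(\C)$, citing \cite[Corollary 8.11]{ENO}. That corollary says something else entirely --- it asserts that $\FPdim\C/\FPdim\D$ is an algebraic integer when $\C\to\D$ is a dominant tensor functor --- and says nothing about dimensions of simple objects. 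Worse, the fact you need is \emph{not} a general property of fusion categories: ``the Frobenius--Perron dimension of a simple object divides that of the category'' is the categorical analogue of Kaplansky's sixth conjecture and cannot be invoked without hypotheses. What saves the argument is that $\Z(\C)$ is not an arbitrary fusion category but a \emph{non-degenerate braided} one; the correct input, and exactly the one the paper uses, is \cite[Theorem 2.11]{ENO2}(i), which gives that $(\FPdim W)^2$ divides $\FPdim\Z(\C)=(\FPdim\C)^2$ as algebraic integers for every simple $W$ of $\Z(\C)$. Since in your situation $\FPdim\mathcal{S}_j$ and $\FPdim\C$ are integers, this yields $\FPdim\mathcal{S}_j\mid\FPdim\C$, and then your contradiction $p-1\ge\FPdim\mathcal{S}_j\ge p$ goes through. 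So the step is repairable, but as written it rests on an inapplicable citation, and the essential ingredient --- non-degeneracy of the center --- never appears in your proof.

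Apart from this, your route differs from the paper's in one pleasant way. The paper splits into cases: if $\C$ is not integral, it is $\Zed_2$-graded by \cite[Theorem 3.10]{gel-nik}, forcing $p=2$ and reducing to Theorem~\ref{index-2}; if $\C$ is integral, then $\Z(\C)$ is integral and the simple summands of $\A$ automatically have integer dimension. Your linear-independence-of-square-roots argument (using \cite[Proposition 8.27]{ENO} in the weakly integral category $\Z(\C)$, plus positivity of the multiplicities) establishes integrality of the dimensions of the summands of $\A$ directly, with no case distinction --- a genuinely more uniform treatment of the same point, and the repaired divisibility step works equally well in this setting. Your endgame (Theorem~\ref{sum-inv}, then Proposition~\ref{equiv-abelian} when $\kk$ contains the $p$-th roots of unity, or Theorems~\ref{cocom-central} and~\ref{equiv-monad-general} in general, the functor being perfect since $\D$ is fusion) matches the paper's conclusion via Theorems~\ref{sum-inv} and~\ref{equiv-monad-general}; your remark that over a general field of characteristic $0$ the acting group is really the group scheme $\Spec H\cong\mu_p$ is in fact more careful than the paper's own formulation.
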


\begin{proof}
 Assume $\C$ is a weakly integral fusion category and
let $p$ be the smallest prime factor of $\FPdim \C$. Consider a
dominant tensor functor  $F: \C \to \D$, where $\D$ is a fusion
category, such that $\FPind(\C: \D) = p$.

Recall that a fusion category is \emph{integral} if its objects all have integral Frobenius-Perron dimension.
If $\C$ is not integral, then by \cite[Theorem 3.10]{gel-nik} it is $\Zed_2$-graded, and so $\FPdim \C$ is even.  Thus  $p = 2$,  and so Theorem~\ref{index-2} applies and we are done.

From now on we assume that $\C$ is integral. Then $\Z(\C)$ is also an integral fusion category. Let $\A = (A, \sigma)$ be the induced central algebra of $F$. We have $\FPdim \A = \FPdim A = \FPind(\C: \D) = p$. Let us
decompose $\A$ as a direct sum of simple objects of $\Z(\C)$:
\begin{equation}\A = W_1 \oplus \dots \oplus W_r. \end{equation}
We have $r \ge 2$ because $\A$ is not simple (it contains the unit object), so the Frobenius-Perron dimension of $W_i$ is an integer
$< p$  for all $i$.

The center $\Z(\C)$  is a non-degenerate braided fusion category. According to \cite[Theorem 2.11]{ENO2} (i),
$(\FPdim W_i)^2$ divides $\FPdim \Z(\C) = (\FPdim \C)^2$, and so $\FPdim W_i$ divides $\FPdim\C$.
We have $\FPdim W_i = 1$, because  $p$ is by assumption the smallest prime divisor of $\FPdim \C$, and so $W_i$ is invertible in $\Z(\C)$.

This implies that $\A$ belongs to $\Z(\C)_{\pt}$. By Theorem~\ref{sum-inv},
we have an exact sequence
\begin{equation}\label{zp}\rep \mathbb Z_p \to
\C \overset{F}\to \D.\end{equation} which is central, and by Theorem~\ref{equiv-monad-general},
it is an equivariantization exact sequence.
This concludes the proof of the theorem.
\end{proof}

\subsection{Fusion subcategories of index $2$ are not always normal}
Let $\C$ be a fusion category.
A full fusion  subcategory $\D \subset \C$ is \emph{normal in $\C$} (see \cite{tensor-exact}) if the inclusion $\D \subset \C$  extends to an exact sequence of fusion categories
$\D \to \C \to \C''$.
In that case we have $\FPdim \C'' = \frac{\FPdim \C}{\FPdim \D}$.

If $\D$ is a full fusion subcategory of $\C$ then the ratio $\frac{\FPdim \C}{\FPdim \D}$ is an algebraic integer (see \cite[Proposition 8.15]{ENO}). If $\C$ is weakly integral, so is $\D$, and therefore
$\frac{\FPdim \C}{\FPdim \D}$ is a natural integer.

Is it true that if $\C$ is weakly integral and $\frac{\FPdim \C}{\FPdim \D}$ is the smallest prime number dividing $\FPdim \C$, then $\D$ is normal in $\C$?
We show that even for $p=2$ such is not the case, by exhibiting counterexamples in Tambara-Yamagami categories (see Section~\ref{ty}).

\begin{proposition}
Let $\C$ be a Tambara-Yamagami category. Then we have
$$\frac{\FPdim \C}{\FPdim \C_{\pt}} = 2,$$
but $\C_{\pt}$ is not normal in  $\C$ if the order of the Picard group of $\C$ is not a square.
\end{proposition}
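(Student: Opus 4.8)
The plan is to prove the two assertions separately. For the dimension count, recall that a Tambara--Yamagami category $\C$ has exactly one non-invertible simple object $X$, with $\FPdim X = \sqrt{|\Gamma|}$, where $\Gamma = \Pic(\C)$. Indeed, since $X \otimes X$ does not contain $X$ as a factor and $X$ is self-dual, $X \otimes X \simeq \oplus_{g \in \Gamma} g$, whence $(\FPdim X)^2 = |\Gamma|$. Thus $\FPdim \C = |\Gamma| + (\FPdim X)^2 = 2|\Gamma|$, while $\C_{\pt} \simeq \Gamma\ti\vect$ has $\FPdim \C_{\pt} = |\Gamma|$. Therefore $\FPdim \C / \FPdim \C_{\pt} = 2$, as claimed.

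For the normality assertion I would argue by contraposition on the structure of an exact sequence extending $\C_{\pt} \subset \C$. Suppose $\C_{\pt}$ \emph{were} normal in $\C$, so that there is an exact sequence $\C_{\pt} \to \C \overset{F}\to \C''$ with $\FPdim \C'' = 2$. Since $\FPdim\C'' = 2$, the category $\C''$ must be $\rep \Zed_2$ (the only fusion category of Frobenius--Perron dimension $2$ over a field of characteristic $0$, up to equivalence). By Theorem~\ref{index-2}, $F$ is then an equivariantization associated with an action of $\Zed_2$ on $\C''$, so $\C \simeq (\C'')^{\Zed_2}$ with $\C'' \simeq \rep \Zed_2$. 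I would then invoke the remark at the end of Section~\ref{ty}: a Tambara--Yamagami category admitting such an exact sequence $\rep \Gamma \to \TY \to \rep \Zed_2$ must in particular admit a fiber functor, hence $\C \simeq \rep H$ for a semisimple Hopf algebra $H$ fitting into an abelian exact sequence $\kk^{\Zed_2} \to H \to \kk\Gamma$.

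The main obstacle, and the crux of the proof, is the arithmetic constraint coming from the existence of a fiber functor on a Tambara--Yamagami category. By the classification in \cite{TY}, the data defining $\TY$ include a symmetric nondegenerate bicharacter on $\Gamma$ and a choice of sign for $\tau = \pm 1/\sqrt{|\Gamma|}$; the existence of a fiber functor (equivalently, that $\C$ is the representation category of a semisimple Hopf algebra, equivalently that $\C$ is group-theoretical in the appropriate sense) forces $|\Gamma|$ to be a perfect square, since the fiber functor must send $X$ to a vector space of integer dimension $\FPdim X = \sqrt{|\Gamma|}$. Hence if $|\Gamma|$ is \emph{not} a square, $\C$ admits no fiber functor, contradicting the consequence derived above. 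Therefore $\C_{\pt}$ cannot be normal in $\C$ when $|\Gamma|$ is not a square, which is exactly the assertion. I would close by noting that this is the step requiring the most care: one must justify that normality of $\C_{\pt}$ genuinely produces a fiber functor on $\C$, tracing through the equivariantization $\C \simeq (\rep\Zed_2)^{\Zed_2}$ and the fact that the forgetful functor composed with the regular representation yields a fiber functor, as recorded in the discussion following the proposition on normal functors on $\TY$.
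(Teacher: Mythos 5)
Your first assertion is proved correctly and exactly as in the paper: $X\otimes X\simeq\bigoplus_{g\in\Gamma}g$ gives $\FPdim X=\sqrt{|\Gamma|}$, hence $\FPdim\C=2|\Gamma|$ and the ratio is $2$. The second half, however, contains genuine errors. First, Theorem~\ref{index-2} does not apply to your functor $F:\C\to\C''$: its hypothesis is that the Frobenius--Perron \emph{index} $\FPind(\C:\C'')=\FPdim\C/\FPdim\C''$ equals $2$, whereas for your exact sequence $\C_{\pt}\to\C\overset{F}\to\C''$ one has $\FPind(\C:\C'')=\FPdim\C_{\pt}=|\Gamma|$; the number $2$ is the dimension of the \emph{quotient} $\C''$, not the index of $F$. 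The conclusion you draw from it is dimensionally absurd: $\C\simeq(\rep\Zed_2)^{\Zed_2}$ would force $\FPdim\C=4$, while $\FPdim\C=2|\Gamma|$. Second, the claim that ``$\C''$ must be $\rep\Zed_2$'' is unjustified and false in general: over an algebraically closed field of characteristic $0$ there are two fusion categories of Frobenius--Perron dimension $2$, namely $\vect_{\Zed_2}\simeq\rep\Zed_2$ and the pointed category with nontrivial associator (the ``semion''), and the latter admits \emph{no} fiber functor. Since your entire route to a contradiction consists in producing a genuine fiber functor on $\C$, it collapses if $\C''$ is the semion category.

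The repair is what the paper does in Lemma~\ref{lem-index-integral}: by \cite[Corollary 8.30]{ENO}, \emph{any} fusion category of prime Frobenius--Perron dimension admits a \emph{quasi}-fiber functor $\omega$ (this covers both possibilities for $\C''$), and the composite $\omega F$ is then a quasi-fiber functor on $\C$, which already forces $\C$ to be integral. This contradicts $\FPdim X=\sqrt{|\Gamma|}\notin\Zed$ when $|\Gamma|$ is not a square. Note that your target contradiction (integrality of $\C$ against irrationality of $\FPdim X$) is the same as the paper's, and that even in the favorable case $\C''\simeq\rep\Zed_2$ the equivariantization detour is unnecessary: composing $F$ with the fiber functor of $\rep\Zed_2$ gives a fiber functor on $\C$ directly. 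It is only the middle of your argument --- the appeal to Theorem~\ref{index-2} and the identification of $\C''$ --- that is broken, and both steps disappear once one works with quasi-fiber functors.
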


\begin{proof}
Let  $\Gamma = \Pic(\C)$ and denote by $X$ the simple non-invertible object of $\C$.
We have $X \otimes X \simeq \sum_{g \in \Gamma} g$, so $\FPdim X = \sqrt{|\Gamma]}$.
We have  $\C_{\pt} = \bra{\Gamma}$, and
$$\frac{\FPdim \C}{\FPdim \C_\pt} = \frac{2 |\Gamma|}{|\Gamma|}=2.$$  Now assume assume that $|\Gamma|$ is not a square. Then $\C$ is not integral because $\FPdim X$ is not an integer.
We conclude by the following lemma.

\begin{lemma}\label{lem-index-integral} Let $\C$ be a fusion category and let $\D \subset \C$ be
a normal fusion subcategory such that
 $\frac{\FPdim \C}{\FPdim \D}$ is prime. Then  $\C$ is integral.
\end{lemma}

\begin{proof} Consider the exact sequence of fusion categories $\D \toto \C \overset{F}\toto \C''$
coming from the fact that $\D$ is normal in $\C$. We have $\FPdim
\C'' = p$. By \cite[Corollary 8.30]{ENO}, $\C''$ admits a
quasi-fiber functor $\omega: \C'' \to \vect_\kk$. Then
$\C$ admits a quasi-fiber functor $\omega K$, and therefore  $\C$
is integral.
\end{proof}

A contrario the lemma shows that $\C_\pt$ is not normal in $\C$.
\end{proof}

\begin{proposition}\label{simple-ty}
Let $\C$ be a Tambara-Yamagami category with Picard group of prime order. Then $\C$ is a simple fusion category.
\end{proposition}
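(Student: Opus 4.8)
The plan is to reduce the statement to the preceding proposition by first classifying all full fusion subcategories of a Tambara-Yamagami category $\C$. Recall that $\C$ is simple precisely when it has no nontrivial proper normal full fusion subcategory, so it is enough to show that the only full fusion subcategories of $\C$ are $\vect_\kk$, $\C_{\pt}$ and $\C$, and that the unique nontrivial proper one, $\C_{\pt}$, fails to be normal.

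First I would classify the full fusion subcategories. Let $\Gamma = \Pic(\C)$ and let $X$ be the non-invertible simple object, so that $X \otimes X \simeq \bigoplus_{g \in \Gamma} g$. If a full fusion subcategory $\D \subset \C$ contains $X$, then it contains $X \otimes X$, hence every element of $\Gamma$, hence every simple object of $\C$, so $\D = \C$. Thus every proper full fusion subcategory is contained in $\C_{\pt} = \bra{\Gamma} \simeq \Gamma\ti\vect$. The full fusion subcategories of $\Gamma\ti\vect$ correspond to subgroups of $\Gamma$; since $|\Gamma| = p$ is prime, the only subgroups are the trivial one and $\Gamma$ itself, yielding $\vect_\kk$ and $\C_{\pt}$. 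Hence the full fusion subcategories of $\C$ are exactly $\vect_\kk$, $\C_{\pt}$ and $\C$.

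It then remains to observe that $\C_{\pt}$ is not normal in $\C$. Since $|\Gamma| = p$ is prime, it is not a perfect square, so the preceding proposition applies directly and gives the non-normality of $\C_{\pt}$. Therefore $\C$ admits no nontrivial proper normal full fusion subcategory, and is simple. The only point requiring care is the classification of fusion subcategories; granting that, the conclusion is immediate from the preceding proposition together with the elementary fact that a prime number is never a square, so I anticipate no substantial obstacle.
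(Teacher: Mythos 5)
Your proof is correct, but it takes a genuinely different route from the paper's. You classify the full fusion subcategories of $\C$ outright --- the elementary fusion-rule observation that any fusion subcategory containing $X$ must contain every summand of $X \otimes X \simeq \bigoplus_{g \in \Gamma} g$ and hence equal $\C$, so the lattice is just $\vect_\kk \subset \C_{\pt} \subset \C$ --- and then quote the preceding proposition to rule out normality of the unique nontrivial proper candidate $\C_{\pt}$ (a prime is never a square). The paper never classifies subcategories: it takes an \emph{arbitrary} exact sequence $\D \to \C \to \C''$ and runs a dimension count on the quotient. Since $p$ is not a square, $\C$ contains a simple object of Frobenius--Perron dimension $\sqrt{p}$, hence is not integral, hence $\C''$ admits no quasi-fiber functor (otherwise composing it with the dominant functor $\C \to \C''$ would make $\C$ integral, as in Lemma~\ref{lem-index-integral}); by \cite[Corollary 8.30]{ENO} this forces $\FPdim(\C'')$ to be neither $1$ nor prime, and since $\FPdim(\C'')$ divides $2p$ it must equal $2p$, so the kernel $\D$ is trivial. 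Both arguments rest on the same underlying inputs --- non-integrality of $\C$ and the quasi-fiber-functor result of ENO --- but yours packages that input inside the previous proposition and makes the subcategory lattice explicit, which is arguably more transparent, while the paper's version is shorter, needs no classification, and proves slightly more: the kernel of \emph{any} exact sequence out of $\C$ is trivial (in particular $\C$ admits no fiber functor), rather than only that the one proper nontrivial subcategory fails to be normal.
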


\begin{proof}
Assume we have an exact sequence of fusion categories $\D \to \C \to \C''$. Since $\C$  contains a simple object of Frobenius-Perron dimension $\sqrt{p}$, it is not integral. Consequently $\C''$ admit no quasi-fiber functor. In particular $\FPdim(\C'')$ is neither $1$ nor a prime number.
Therefore $\FPdim(\C'')=2p$, and $\D$ is trivial. Hence $\C$ is simple.
\end{proof}

\bibliographystyle{amsalpha}

\end{document}